\numberwithin{equation}{section}
\definecolor{hanblue}{rgb}{0.27, 0.42, 0.81}
\definecolor{mordantred19}{rgb}{0.68, 0.05, 0.0}
\definecolor{darkgreen}{rgb}{0.0, 0.38, 0.12}
\definecolor{red}{rgb}{0.8, 0.0, 0.0}
\definecolor{green}{rgb}{0.0, 0.5, 0.0}
\newtheorem{theorem}{Theorem}[section]
\newtheorem{proposition}[theorem]{Proposition}
\newtheorem{lemma}[theorem]{Lemma}
\theoremstyle{remark}
\newtheorem{remark}[theorem]{Remark}
\theoremstyle{definition}
\newtheorem{definition}[theorem]{Definition}
\newtheorem{example}[theorem]{Example}
\numberwithin{equation}{section}
\newcommand{\iid}{\stackrel{\mathrm{iid}}{\sim}}
\newcommand{\N}{\mathbb{N}}
\newcommand{\Z}{\mathbb{Z}}
\newcommand{\R}{\mathbb{R}}
\newcommand{\norm}[1]{\left\Vert #1 \right\Vert}
\newcommand{\abs}[1]{\left\vert #1 \right\vert}
\DeclareMathOperator*{\argmin}{arg\,min}
\renewcommand{\d}{\,\mathrm{d}}
\newcommand{\calA}{\mathcal{A}}
\newcommand{\calP}{\mathcal{P}}
\newcommand{\eps}{\varepsilon}
\newcommand{\KR}{\mathrm{KR}}
\newcommand{\KRu}{\mathrm{KRu}}
\newcommand{\TV}{\mathrm{TV}}
\newcommand{\diam}{\operatorname{diam}}
\newcommand{\Lip}{\mathrm{Lip}}
\newcommand{\M}{\mathcal M}
\newcommand{\defeq}{:=}
\newcommand{\weakto}{\rightharpoonup}
\newcommand{\scal}[4]{{}_{#3}\langle #1,#2\rangle_{#4}}
\newcommand\restr[2]{{\left.\kern-\nulldelimiterspace #1 \vphantom{\big|} \right|_{#2}}}
\DeclareMathOperator*{\wstlim}{w^\ast\!-lim}
\newcommand{\mres}{\mathbin{\vrule height 1.4ex depth 0pt width
0.13ex\vrule height 0.13ex depth 0pt width 1.0ex}}
\let\sp\relax
\newcommand{\sp}[1]{\left\langle #1 \right\rangle}
\renewcommand{\phi}{\varphi}
\newcommand{\lsc}{lower-semicontinuous}
\DeclareMathOperator{\Ext}{Ext}
\newcommand\T{\rule{0pt}{2.6ex}}       
\newcommand\B{\rule[-1.2ex]{0pt}{0pt}} 
\title{A Lipschitz spaces view of infinitely wide shallow neural networks}
\author{Francesca Bartolucci\thanks{ Institute of Applied Mathematics, TU Delft, Mekelweg 4, 2628 CD Delft, The Netherlands \\(\texttt{f.bartolucci@tudelft.nl})} \and Marcello Carioni\thanks{Department of Applied Mathematics, University of Twente, 7500AE Enschede, The Netherlands \\
(\texttt{m.carioni@utwente.nl}, \texttt{jose.iglesias@utwente.nl})} \and Jos\'e A. Iglesias\footnotemark[2] \and Yury Korolev\thanks{Department of Mathematical Sciences, University of Bath, Claverton Down, BA2 7AY, UK (\texttt{ymk30@bath.ac.uk})}  \and Emanuele Naldi\thanks{MaLGa - DIMA, University of Genoa, 16146 Genoa, Italy (\texttt{emanuele.naldi@edu.unige.it})} \and Stefano Vigogna\thanks{RoMaDS - Department of Mathematics,
University of Rome Tor Vergata
Rome, RM 00133, Italy\\ (\texttt{vigogna@mat.uniroma2.it})}}
\date{}
\begin{document}

\maketitle

\begin{abstract}
We revisit the mean field parametrization of shallow neural networks, using signed measures on unbounded parameter spaces and duality pairings that take into account the regularity and growth of activation functions. This setting directly leads to the use of unbalanced Kantorovich-Rubinstein norms defined by duality with Lipschitz functions, and of spaces of measures dual to those of continuous functions with controlled growth. These allow to make transparent the need for total variation and moment bounds or penalization to obtain existence of minimizers of variational formulations, under which we prove a compactness result in strong Kantorovich-Rubinstein norm, and in the absence of which we show several examples demonstrating undesirable behavior. Further, the Kantorovich-Rubinstein setting enables us to combine the advantages of a completely linear parametrization and ensuing reproducing kernel Banach space framework with optimal transport insights. We showcase this synergy with representer theorems and uniform large data limits for empirical risk minimization, and in proposed formulations for distillation and fusion applications.
\end{abstract}

\vskip .3truecm \noindent \textbf{Keywords.}
Neural networks, reproducing kernel Banach spaces, Lipschitz spaces, Kantorovich-Rubinstein norm, optimal transport, representer theorems	

\vskip .2truecm \noindent \textbf{2020 Mathematics Subject Classification.} 68T07, 46E27, 46B20

\tableofcontents

\section{Introduction}

Understanding  theoretical properties of neural networks
is key to explaining their practical success.
In particular, functional analysis can shed light on their inductive bias,
identifying what classes of functions they can efficiently represent and learn.
Function spaces traditionally associated to learning models
are often characterized by smoothness conditions,
such as Sobolev regularity for kernel methods \cite{Wen05}.
However, neural networks seem to be driven by different priors,
notably forms of sparsity,
for which new functional structures may be required \cite{barron,bach2017breaking, e2019barron,Ongie2020A}.
Moreover, in seeming contradiction with classical models,
overparametrization is often beneficial in neural networks \cite{zhang,BelEtAl19},
which raises the question of how their complexity should be measured \cite{jiang2020fantastic}.
Defining functional norms provides a principled way to control
model complexity and enforce sparsity biases empirically observed.


At first glance, neural networks appear to be completely unstructured sets,
lacking linearity and a clear topological structure.
In addition to being nonlinear, the parametrization of a given architecture is arbitrarily fixed.
In practical scenarios, learning usually occurs in overparameterized regimes,
suggesting that networks operate in spaces with unbounded width.
Also from an approximation standpoint,
classical universality results indicate that the network size should be allowed to grow \cite{cybenko:1989, leshno:1993}.
Moreover, infinite width limits have been considered at initialization and through the training process,
where equivalence with Gaussian processes has been shown \cite{neal2012bayesian, lee2018deep, matthews2018gaussian}. \looseness=-1


In general, it has been commonly accepted that
``the size of the weights is more important than the size of the network'' \cite{NIPS1996_fb2fcd53}.
But what is the right notion of size?
In the scaling limits that lead to Gaussian processes,
the network is completely characterized by a kernel,
be it the random feature or the neural tangent kernel \cite{rahimi,jacot2018neural}.
Thus, in these regimes, neural network are equivalent to or well approximated by kernel methods,
exhibiting lazy to no feature learning \cite{chizat2019lazy}.
The corresponding functional structure is given by reproducing kernel Hilbert spaces (RKHS) \cite{PauRag16},
whose norm is incapable of capturing the actual inductive bias of neural networks \cite{bietti2021deep, GhoEtAl21}.


Besides by limit, infinite width neural networks can be obtained by \emph{integration}.
The main idea is to rethink the network as parameterized on a space of measures,
rather than on the space of weights,
with finite atomic measures corresponding to finite width networks \cite{bach2017breaking}.
This approach opens to a series of good properties.
\begin{enumerate*}
 \item Linearity: the space of neural networks becomes linear (in the shallow case),
 or parameterized by a linear space (in the deep case).
 \item Topological structure: neural networks naturally inherit metrics and norms from measure spaces,
 such as the Wasserstein distance and the total variation norm.
 \item Nonparametric model: the space of neural networks contains now every possible finite architecture,
 which can therefore be problem- and data-dependent, rather than arbitrarily fixed.
 \item Feature learning: optimizing on the space of measures,
 interesting training dynamics can occur,
 for instance through Wasserstein gradient flow in the mean field regime \cite{mei2018,chizat2018global}.
\end{enumerate*}


Parametrizing neural networks over measures,
bias and training deviate from classical Hilbert regimes,
leading to more general Banach structures.
Yet, in this structural transition,
neural networks maintain an interesting functional similarity with kernel methods.
Analogously to the elements of a RKHS,
neural networks can be seen as hyperplanes in a feature space:
while such a feature space is Hilbertian for RKHS,
it is a Banach space (of measures) for neural networks.
The resulting structure is that of a reproducing kernel \emph{Banach} space (RKBS) \cite{zhang2009reproducing,linrkbs}.
Following this formalization, \cite{bartolucci2023understanding,bartolucci2024neural} have defined hypothesis classes and derived representer theorems for shallow and deep networks.
In functional terms, the complexity can be measured by a total variation norm,
characterizing the inductive bias as a bias towards sparsity in feature space.
In the particular yet fundamental case of the ReLU activation function,
such a bias can be described more explicitly using splines in the Radon transform domain~\cite{Unser2017769,parhi:2021,parhi2022}.
Moreover, for any activation,
the norm of finite parametric solutions (as provided by the representer theorem)
is controlled by an $\ell^1$ norm of the network coefficients.


The general construction of neural RKBS is based on a suitable interpretation
of the integration of neurons against a measure,
and precisely on the way we understand the integration as a pairing between dual spaces.
In \cite{bartolucci2023understanding}, the chosen duality is the one between Radon measures and continuous functions vanishing at infinity.
Since no common activation function has such a decay,
a suitably decaying weight function needs to be multiplied to make the integral converge.
As a result, the norm in the RKBS gets rescaled by this function,
and the norm of parametric solutions is actually a \emph{weighted} $\ell^1$ norm of the network coefficients.
The specific form of the inductive bias of RKBS neural networks therefore depends on the choice of this weight function.
While technically necessary and fairly easy to exemplify, 
the role of this weight, and hence the real nature of the inductive bias, is still not completely understood.


In this paper we aim to provide a deeper understanding of neural network functional bias,
along with an alternative description of neural RKBS.
Our technical starting point is rethinking the integration of neurons,
replacing the Riesz-Markov duality with that of Kantorovich-Rubinstein.
While typical activation functions do not vanish at infinity,
many are indeed Lipschitz continuous,
which makes this choice very natural.
Our formulation directly suggests the Kantorovich-Rubinstein norm as a measure of complexity and a natural form of regularization.
Moreover, the space of neural networks acquires a new interesting structure:
the distance between two networks can now be measured by a certain unbalanced Wasserstein metric,
drawing a connection to optimal transport.
This provides a variational framework for a variety of applications,
spanning from knowledge distillation \cite{hinton2015distilling, bucilua2006model} and model fusion \cite{singh2020model,akash2022wasserstein}
to linear mode connectivity \cite{pmlr-v238-ferbach24a}.
Taking this perspective also allows to extend the mean field analysis of neural networks
from a population to a sample setting,
studying strong convergence in Wasserstein distance of solutions to the empirical risk minimization (ERM) in the infinite data limit. Let us also emphasize that in contrast to \cite{SpeHeeSchBru22}, we are not studying dual pairings for the standard RKBS built from Riesz-Markov duality, but rather using a different underlying duality in the construction of the RKBS.


The definition of a nonparametric hypothesis space
makes ERM an infinite-dimensional problem.
Thus, a representer theorem, providing a workable parametric form of solutions, becomes desirable.
In the case of neural networks, it also serves as proof that
the proposed space of integral functions is a good mathematical model for the finite width architectures that are optimized in practice.
To ensure that any solutions exist through a standard calculus of variations argument, 
one needs first to establish the compactness of the unit ball with respect to the regularizing norm.
However, since the Kantorovich-Rubinstein space is not in general a dual space,
the Banach-Alaoglu theorem can not be applied.
To address this problem, we prove that
the total variation ball is compact in the Kantorovich-Rubinstein topology
under suitable moment conditions.
More precisely, we show that having finite moments of order strictly larger than $1$
is both necessary and sufficient to obtain compactness.
This result supports the adoption of a combined regularization consisting of three terms:
the Kantorovich-Rubinstein norm, the total variation norm, and a moment of sufficiently high order.
With this regularization, we prove a representer theorem for ERM
over the space of Kantorovich-Rubinstein infinitely wide neural networks,
establishing the existence of solutions of finite width.


The finite moment requirement suggests that the weight function introduced in \cite{bartolucci2023understanding}
should be regarded as an attribute of the parameterizing measure, rather than a way to artificially impose on the activation function a decay that it does not have.
With this insight, we further revise the neuron-measure duality,
interpreting the integral as a pairing between the space of weighted measures and its predual,
the space of continuous functions with controlled growth. 
On the one hand, this view extends the RKBS construction of neural networks beyond Lipschitz activations (such as rectified power units).
On the other hand, it jointly generalizes the Kantorovich-Rubinstein and Riesz-Markov neural network spaces.
Indeed, the Kantorovich-Rubinstein case can be recovered taking polynomial weights,
while the Riesz-Markov duality is obtained appending the measure weight to the activation.
This clarifies that weights are not merely technical conditions, but crucial ingredients
for defining the bias structure of neural networks and tuning the desired order of regularization,
with the Kantorovich-Rubinstein framework providing a canonical family of weights in terms of moments.
We also mention related work on weighted Barron spaces on bounded domains~\cite{DevNowParSie25} and on embeddings of Barron spaces arising from different activation functions~\cite{HerSpeSchBru24}.

\medskip

\noindent \textbf{Outline of the main contributions.} In summary, our main contributions are:
\begin{enumerate}
 \item We introduce new hypothesis spaces for neural networks with Lipschitz activations based on the Kantorovich-Rubinstein duality.
 These spaces are RKBS, with a norm that naturally connects neural networks to optimal transport.
 \item Based on this connection, we provide natural variational formulations of teacher-student training and fusion of pre-trained networks.
 Moreover, we complement the mean field analysis of neural networks proving a result of strong convergence (in the Kantorovich-Rubinstein norm) of empirical risk minimizers in the infinite data limit, which in turn ensures uniform convergence on bounded sets for the network realizations.
 \item To show existence of solutions to the ERM problem,
 we prove a new compactness result in the Kantorovich-Rubinstein topology
 under the necessary and sufficient condition of finiteness of moments of order strictly greater than $1$.
 This theorem extends known results from compact to non-compact metric spaces and  is of independent interest.
 \item Using this result, we prove representer theorems for the Kantorovich-Rubinstein RKBS, showing the existence of ERM solutions of finite width.
 This justifies the proposed function spaces as nonparametric hypothesis classes for neural networks
 and provides a functional characterization of their inductive bias.
 \item We generalize the RKBS  structure using the duality between continuous functions with controlled growth and weighted measures.
 This allows us to clarify the role of weights in RKBS neural parameterizations
 and suggests polynomial weights as a canonical choice in the Kantorovich-Rubinstein case.
 \end{enumerate}
 
 
\noindent \textbf{Organization of the paper.} The remainder of the paper is organized as follows.
In Section~\ref{sec:two}, after recalling basic definitions of infinite-width shallow neural networks and RKBS, we define the Kantorovich-Rubinstein norm, we introduce the ERM problem and we present applications to the distillation and fusion of neural networks.
In Section~\ref{sec:comp}, we discuss the compactness of the Kantorovich-Rubinstein ball under moment bounds.
In Section~\ref{sec:regprobs}, we introduce the general optimization problem where the Kantorovich-Rubinstein norm is used as a regularizer, and we prove its well-posedness. We also propose several examples showing that the lack of such a moment condition leads to ill-posedness.
In Section~\ref{sec:controlledgrowth}, we introduce the space of continuous functions with controlled growth and we define the corresponding RKBS.
In Section~\ref{sec:representer}, we present several representer theorems for the general optimization problem including Kantorovich-Rubinstein norm penalization, concluding with applications to the distillation of neural networks.
In Section~\ref{sec:samplinglimit}, we study the convergence in Kantorovich-Rubinstein norm of empirical risk minimizers in the infinite data limit. 
In Section~\ref{sec:experiments}, we provide a simple numerical experiment on neural network distillation, exploiting the representer theorems of Section~\ref{sec:representer} in order to set up the corresponding finite-dimensional problem. We discuss how the student network constructed using the $\KR$ norm as penalty leads to reconstruction with different behavior than the student network obtained using unstructured sparsity promoted by the more usual total variation penalty.
For readers' convenience, we summarize our notation in \cref{tab:lip}. 

\begin{table}[t]

\caption{Lipschitz functions and measures on a generic space $Z$ (such as one of parameters $Z=\Theta$)} \label{tab:lip}

\centering

\resizebox{\columnwidth}{!}{

\begin{tabular}{l l l l}

\hline

$Z$ & \multicolumn{3}{l}{pointed metric space with metric $d$ and base point $e$}  \T\\

$C_b(Z)$ & \multicolumn{3}{l}{continuous bounded functions on $Z$, used for narrow convergence in $\M(Z)$} \T\\

$C_0(Z)$ & \multicolumn{3}{l}{continuous functions vanishing at infinity on $Z$} \T\\

$\Lip(Z)$ & Lipschitz functions on $Z$ & $\Lip_0(Z)$ & Lipschitz functions on $Z$ vanishing at $e$ \\

$ L(f) $ & Lipschitz constant of $f$ \\

$ \| f \|_{\Lip} $ & $ \max \{ |f(e)| , L(f) \} $ & $ \| f \|_{\Lip_0} $ & $ L(f) $ \B\\

\hline

\multicolumn{4}{c}{$ \Lip(Z) \cong \Lip_0(Z) \oplus \R $} \T\B\\

\hline

$\M(Z)$ & bounded measures on $Z$  & $\M^0(Z)$ & balanced measures on $Z$, i.e. $\mu(Z)=0$.\B\\

$ \|\mu\|_\TV $ & TV norm of $\mu \in \M(Z) = C_0(Z)^*$ \\

$\M_q(Z)$ & measures with finite $q$-moment & $\M_q^0(Z)$ & balanced measures in $\M_q(Z)$ \B \\

\hline

\multicolumn{4}{c}{$ \M_1(Z) \cong \M_1^0(Z) \oplus \R $} \T\B\\

\hline

$\| \mu \|_\KRu$ & $\| \mu \|_\KR + |\mu(Z)| $ & $\| \mu \|_\KR$ & Kantorovich-Rubinstein norm of $\mu$ \T\\

$ \KRu(Z) $ & completion of $\M_1(Z)$ under $\|\cdot\|_\KRu$ & $ \KR(Z) $ & completion of $\M_1^0(Z)$ under $\|\cdot\|_\KR$ \B \\

\hline

\multicolumn{4}{c}{$ \KRu(Z) \cong \KR(Z) \oplus \R $} \T\B \\

\hline

\multicolumn{2}{c}{$ \KRu(Z)^* \cong \Lip(Z) $} & \multicolumn{2}{c}{$ \KR(Z)^* \cong \Lip_0(Z) $} \T\B\\

\hline

\end{tabular}

}

\end{table}

\section{Reproducing Kernel Banach Spaces meet Kantorovich-Rubinstein norms}\label{sec:two}
\subsection{Infinite-width shallow neural networks}\label{sec:infwidth}
With the goal of fixing notation for the next sections we start by recalling the notion of shallow neural networks. Given a nonlinear activation function $\sigma : \R \rightarrow \R$, shallow neural networks with $N$ neurons are the class of parametrized functions defined as
\begin{align}\label{eq:finitenetwork}
f_{W,a,b}(x) = \frac{1}{N} \sum_{i=1}^N a_i \sigma(\langle w_i,x \rangle + b_i),\quad x \in \R^d,
\end{align}
with parameters $W \in \R^{N\times d}$, $a \in \R^N$ and $b \in \R^N$, where we denote by $w_i$ the $i$-th row of $W$.
A popular choice for the activation function $\sigma: \R \rightarrow \R$ is the so-called ReLU, defined as $\sigma(z) = \max\{z,0\}$. 

Shallow neural networks are parametrized nonlinearly. It has been noted (e.g., \cite{bach2017breaking}), however, that it is possible to overcome the obstacle of the nonlinearity by considering infinite-width shallow neural networks which are  parametrized linearly by Radon measures $\mu\in \mathcal{M}(\Theta)$ on the parameter space $\Theta = \{(w,b) \in \R^{d+1} \colon w \in \R^{d}, \, b \in \R\}$,
\begin{equation}\label{eq:meanfield}
    f_{\mu}(x) = \int_\Theta \sigma(\langle w, x\rangle+b)\, {\rm d}\mu(w,b),\qquad x\in\R^d.
\end{equation}
By choosing the empirical measure $\mu = \frac{1}{N}\sum_{i=1}^N a_i \delta_{(w_i, b_i)}$ in \eqref{eq:meanfield}, one  recovers a shallow neural network with $N$ neurons~\eqref{eq:finitenetwork}. Moreover, by identifying $x$ with $(x,1)$ one can consider $(w,b)$ as a single variable and write  
\begin{align}\label{eq:infinite_width_nn}
f_{\mu}(x) = \int_\Theta \sigma(\langle \theta, x\rangle)\, \d\mu(\theta), \qquad x\in \R^d \times \{1\} \subset \R^{d+1}.
\end{align}
For simplicity and given the recurrent role of the pairing $\langle \theta, x\rangle$ in the rest of the paper, in what follows we will always make this identification and, where a restriction may be warranted, denote the data space as $X \subseteq \R^{d} \times \{1\} \subset \R^{d+1}$.

As already pointed out, the space of infinite-width shallow neural networks $f_{\mu}$ is linear in $\mu \in \mathcal{M}(\Theta)$ and the training process becomes convex. 
Indeed, given a set of data and labels $(x_i,y_i)_{i=1}^N$, with $x_i \in \R^{d+1}$ and $y_i \in \R$, the learning problem for $f_\mu$ can be written as 
\begin{align}\label{eq:risk}
    \inf_{\mu \in \mathcal{M}(\Theta)} \frac{1}{N} \sum_{i=1}^N L(y_i, f_\mu(x_i)) + G(\mu),
\end{align}
where $L$ is a loss that is convex in the second component and $G$ is a convex regularizer with weak$^*$ compact sublevel sets.
Note that in the infinite width limit it is common to assume  compactness of the parameter space $\Theta \subset \R^{d+1}$. In particular, if one restricts the problem to measures on the $d$-dimensional sphere $\mathcal{M}(\mathbb{S}^d)$, the optimization problem \eqref{eq:risk} can be interpreted as an ERM problem for Barron functions \cite{weinan2022representation}. However, if the parameter space $\Theta$ is non-compact (which is often the case with commonly used neural networks), the integral \eqref{eq:infinite_width_nn} may not converge. In \cite{bartolucci2023understanding} the authors include a smoothing function in the infinite width limit of a shallow neural network to control the growth of the activation function and consequently ensure the convergence of the integral \eqref{eq:infinite_width_nn} for all measures. This approach allows one to extend to this non-compact setting the function spaces defined by neural networks, which turn out to be reproducing kernel Banach spaces parametrized by measures. 

\subsection{Function spaces of infinite-width shallow neural networks}
The study of infinite-width neural networks and their associated function spaces originates in the classical question of which functions can be approximated by finite-width neural networks \cite{hornik1989multilayer, cybenko:1989, barron, pinkus1999approximation}. Classical approximation theory focused on the relationship between approximation accuracy and network size, typically analyzing how fast the width must grow or how well a finite network can approximate a target up to a prescribed error. In this context, Barron introduced a class of functions, now known as Barron spaces, which capture those functions admitting dimension-free approximation by shallow neural networks \cite{barron}. Such functions can be expressed as infinite-width neural networks with bounded norm. A different functional-analytic perspective was introduced in \cite{savarese2019infinite}, where the authors studied approximation within the class of bounded-norm infinite-width networks. This perspective paved the way for a systematic investigation of  function spaces associated with infinite-width neural networks, along with the characterization of their norms. Notably, a norm characterization in terms of the Radon transform was presented in  \cite{Ongie2020A}, and later refined and extended in \cite{parhi:2021} and \cite{bartolucci2023understanding}. In recent years, the theory of function spaces associated with shallow neural networks has advanced significantly, producing a number of refined results \cite{bartolucci2023understanding, unser2023ridges, SpeHeeSchBru22, HerSpeSchBru24, DevNowParSie25}. In particular, the authors in \cite{bartolucci2023understanding} proposed a unified framework based on reproducing kernel Banach spaces, which encompasses and extends earlier constructions. It is worth stressing that mentioned results on infinite-width neural networks identify such function spaces through a total-variation?type regularization. In our work, for the first time, we move beyond this classical total variation framework and introduce a Kantorovich?Rubinstein regularization, resulting in a fundamentally different function space and associated norm. Finally, the study of analogous function spaces associated with deep neural networks remains in an exploratory phase \cite{parhi2022,bartolucci2024neural, heeringa2025deep, dummer2025vector} and constitutes an active and rapidly developing area of research.

\subsection{Reproducing kernel Banach spaces of shallow neural networks}

The definition of a reproducing kernel Hilbert space (RKHS) readily generalizes to the Banach space setting \cite{lin2022reproducing}. 

\begin{definition}[Reproducing kernel Banach space]
 Let $X$ be a set. A \emph{reproducing kernel Banach space} (RKBS) $\mathcal{B}$ over $ X $ is a Banach space of functions $f:X\to\R$ such that for all $x\in X$ there is a constant $ C_x > 0 $ such that
$
|f(x)|\leq C_x\|f\|_{\mathcal{B}}$ for all $ f \in \mathcal{B} $. In other words, for all $x\in X$ the map $f\mapsto f(x)$ is a continuous linear functional on $\mathcal{B}$.  
\end{definition}

The characterization of RKHSs in terms of feature maps is very popular in machine learning since it allows to see every function in a RKHS as a hyperplane in the codomain of the feature map, the so-called feature space. Interestingly, this characterization generalizes naturally to the Banach space setting.

\begin{proposition}[\cite{bartolucci2023understanding}]
 \label{prop:RKBS}
 A space $\mathcal{B}$ of functions on a set $ X$ is a RKBS
 if and only if
 there exist a Banach space $\mathcal{F}$ and a map $ \phi : X \to \mathcal{F}^* $ such that
 \begin{enumerate}
  \item[(i)] \label{it:rkbs-rep_prop}
  $ \mathcal{B} = \{ f_\mu : \mu \in \mathcal{F} \} $,\quad $ f_\mu (x) = {}_{\mathcal{F}}\langle \mu,\phi(x) \rangle_{\mathcal{F}^*} $;
  \item[(ii)] \label{it:rkbs-norm}
  $ \| f \|_\mathcal{B} = \inf \{ \| \mu \|_{\mathcal{F} }: \mu \in \mathcal{F} , f = f_\mu \} $.
\end{enumerate}
\end{proposition}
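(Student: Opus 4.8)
The proof is a standard application of the correspondence between bounded linear operators into spaces of functions and quotients of Banach spaces, combined with the defining property of a RKBS. I split it into the two implications.

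\textbf{Sufficiency.} Suppose $\mathcal{F}$ and $\phi:X\to\mathcal{F}^*$ are given, and let $\mathcal{B}$ and $\|\cdot\|_{\mathcal{B}}$ be defined by (i)--(ii). Consider the linear map $T:\mathcal{F}\to\R^X$, $T\mu=f_\mu$, whose range is exactly $\mathcal{B}$. The key observation is that for each fixed $x\in X$ the evaluation $\mu\mapsto f_\mu(x)={}_{\mathcal{F}}\langle\mu,\phi(x)\rangle_{\mathcal{F}^*}$ is a bounded linear functional on $\mathcal{F}$ of norm $\|\phi(x)\|_{\mathcal{F}^*}$, precisely because $\phi(x)\in\mathcal{F}^*$. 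Hence $N:=\ker T=\bigcap_{x\in X}\ker\big({}_{\mathcal{F}}\langle\,\cdot\,,\phi(x)\rangle_{\mathcal{F}^*}\big)$ is a closed subspace of $\mathcal{F}$, so $\mathcal{F}/N$ is a Banach space and $T$ factors through a linear bijection $\mathcal{F}/N\to\mathcal{B}$. I would then check that the quotient norm is transported exactly onto (ii): since $f_\nu=f_\mu$ if and only if $\nu-\mu\in N$, one has $\|[\mu]\|_{\mathcal{F}/N}=\inf\{\|\nu\|_{\mathcal{F}}:\nu-\mu\in N\}=\inf\{\|\nu\|_{\mathcal{F}}:f_\nu=f_\mu\}=\|f_\mu\|_{\mathcal{B}}$. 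Thus $(\mathcal{B},\|\cdot\|_{\mathcal{B}})$ is isometrically isomorphic to the Banach space $\mathcal{F}/N$, hence complete; it is a genuine normed space of functions (not merely a seminormed one) because $\|f\|_{\mathcal{B}}=0$ yields $\mu_n$ with $f_{\mu_n}=f$ and $\|\mu_n\|_{\mathcal{F}}\to0$, whence $|f(x)|\le\|\mu_n\|_{\mathcal{F}}\|\phi(x)\|_{\mathcal{F}^*}\to0$ for every $x$, i.e. $f=0$. Finally, for any $f=f_\mu\in\mathcal{B}$ and any representative $\mu'$ of $f$ we have $|f(x)|=|{}_{\mathcal{F}}\langle\mu',\phi(x)\rangle_{\mathcal{F}^*}|\le\|\phi(x)\|_{\mathcal{F}^*}\|\mu'\|_{\mathcal{F}}$; taking the infimum over such $\mu'$ gives $|f(x)|\le C_x\|f\|_{\mathcal{B}}$ with $C_x=\max\{\|\phi(x)\|_{\mathcal{F}^*},1\}>0$, so $\mathcal{B}$ is a RKBS.

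\textbf{Necessity.} Conversely, if $\mathcal{B}$ is a RKBS, I would take $\mathcal{F}=\mathcal{B}$ and define $\phi(x)=\delta_x$, the point evaluation $f\mapsto f(x)$, which belongs to $\mathcal{B}^*=\mathcal{F}^*$ exactly by the RKBS property. Then $f_\mu(x)={}_{\mathcal{F}}\langle\mu,\delta_x\rangle_{\mathcal{F}^*}=\mu(x)$, so $f_\mu=\mu$ as a function, $\{f_\mu:\mu\in\mathcal{F}\}=\mathcal{B}$, and the only representative of a given $f\in\mathcal{B}$ is $\mu=f$, so the infimum in (ii) returns $\|f\|_{\mathcal{B}}$. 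Both (i) and (ii) hold trivially.

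\textbf{Where the work is.} No step is a serious obstacle; the two points deserving care are (a) that $N=\ker T$ is \emph{closed}, which is exactly where the hypothesis $\phi(x)\in\mathcal{F}^*$ (continuity of $\mu\mapsto f_\mu(x)$ on $\mathcal{F}$) enters, and (b) that the expression in (ii) separates points of $\mathcal{B}$ and coincides with the quotient norm, so that $\mathcal{B}$ is a bona fide Banach function space. Everything else is bookkeeping.
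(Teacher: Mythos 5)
Your proof is correct and takes essentially the standard route: the paper itself states this proposition without proof, citing \cite{bartolucci2023understanding}, and your argument --- the quotient construction $\mathcal{F}/\ker T$ with the observation that the kernel is closed because each $\phi(x)\in\mathcal{F}^*$ makes evaluation continuous, the identification of the quotient norm with the infimum in (ii), and the choice $\mathcal{F}=\mathcal{B}$, $\phi(x)=\delta_x$ for the converse --- is precisely the proof given in that reference. No gaps.
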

The map  $ \phi $ and the space $\mathcal{F}$ in Proposition~\ref{prop:RKBS} are respectively called \emph{feature map} and \emph{feature space}. Proposition~\ref{prop:RKBS} gives a recipe to construct a RKBS on a set $X$ starting from a Banach space $\mathcal{F}$ and a map $\phi\colon X\to\mathcal{F}^*$. In \cite{bartolucci2023understanding} the authors exploit Proposition~\ref{prop:RKBS} to define a class of RKBSs parametrized by the space of bounded measures. More precisely, they consider $\mathcal{F}=\mathcal{M}(\Theta)$, the Banach space of bounded measures on the parameter space $\Theta$, and define the feature map as 
$$
 \phi : X \to \mathcal{M}(\Theta)^*,
 \qquad \scal{\mu}{\phi(x)}{\mathcal M(\Theta)}{\mathcal M(\Theta)^*}=
 \int_\Theta \rho(x,\theta) \beta(\theta) {\rm d} \mu(\theta)  , 
 $$
 where $ \rho : X \times \Theta \to \R $ and  $ \beta : \Theta \to \R $ are $\theta$-measurable functions such that
\begin{equation}
\sup_{\theta\in\Theta} |\rho(x,\theta)  \beta(\theta)|=D_x< \infty \quad \forall \, x \in X ,
  \end{equation}
  for some $D_x>0$. Here, the function $\beta$ is needed to ensure that the
integral converges for all $\mu$ since $\rho$ may be unbounded. In particular, for $\rho\colon X\times\Theta\to\R$ of the form 
\[
\rho(x,\theta)=\sigma(\langle\theta,x\rangle),
\]
for some nonlinear activation  $\sigma\colon\R\to\R$,
the associated RKBS includes shallow neural networks with arbitrary width which correspond to measures having finite support.

Such function spaces are not uniquely defined by their compatibility with neural network parametrization. Equation \eqref{eq:infinite_width_nn} may be interpreted as a dual pairing between different pairs of suitable spaces and every interpretation gives rise to a different function space equipped with a corresponding norm. Studying the variety of function spaces defined by neural networks can help to understand the corresponding learning models and their inductive bias. 

\subsection{Lipschitz functions and unbalanced Kantorovich-Rubinstein norm on signed measures}
Since many  popular activation functions are Lipschitz, it seems reasonable to interpret the integral in \eqref{eq:infinite_width_nn} as the dual pairing 
\begin{align}\label{eq:pairingKR}
\scal{\mu}{\sigma(\langle\cdot,x\rangle)}{\KRu(\Theta)}{\Lip(\Theta)},
\end{align}
where $\Lip(\Theta)$ is the space of Lipschitz functions on $\Theta \subset \R^{d+1}$ endowed with a suitable norm and $\KRu(\Theta)$ will denote its predual, which contains the subspace $\mathcal{M}_1(\Theta)$ of measures with a finite first moment. In this Section~we introduce the necessary definitions and results regarding Lipschitz functions and Kantorovich-Rubinstein (KR) norms. For sake of generality we consider functions and measures defined in a general metric space $(Z,d)$. 
From now on, unless otherwise stated, we assume $(Z,d)$ to be a locally compact, complete, separable, geodesic, pointed metric space with base point $e \in Z$.
Let $\Lip(Z)$ denote the space of Lipschitz functions on $Z$  with the norm
\begin{equation}\label{eq:norm-Lip-e}
    \norm{f}_{\Lip} \defeq \max \left\{\abs{f(e)}, L(f) \right\}, \quad \text{where}\quad L(f) \defeq \sup_{z,z' \in Z,\ z\neq z'} \frac{\abs{f(z)-f(z')}}{d(z,z')}
\end{equation}
is the Lipschitz constant. The subspace of Lipschitz functions vanishing at the base point is denoted by $\Lip_0(Z)$ and its norm by $\norm{\cdot}_{\Lip_0} = L(\cdot)$.

Let denote by $\M(Z)$ the Banach space of bounded measures defined on the Borel $\sigma$-algebra of $Z$ endowed with the total variation (TV) norm $\|\cdot\|_{\TV}$. Since $Z$ is second countable, the elements of $\mathcal{M}(Z)$ are finite Radon measures, and the Markov-Riesz representation theorem ensures that
$\mathcal{M}(Z)$ can be identified with the dual of $C_0(Z)$, the Banach space of continuous functions going to zero at
infinity endowed with the sup norm $\|\cdot\|_{\infty}$. Then the TV norm is written as
\[
\|\mu\|_{\rm TV} = \sup\left\{\int_Z\psi(z){\rm d\mu}(z): \psi\in C_0(Z), \|\psi\|_{\infty}\leq1\right\}.
\]
We denote by $\M^0(Z)$ the subspace of balanced measures, i.e. those with $\mu(Z)=0$. Let $\mathcal{M}_p(Z)$, with $p\in(0,+\infty)$, be the subspace of measures in $\mathcal{M}(Z)$ with finite $p$-moments, i.e.
\begin{equation}\label{eq:def_of_Mp}
    \mathcal{M}_p(Z) \defeq \left\{\mu\in \mathcal{M}(Z):\int_Z d(z,e)^p \d\abs{\mu}(z)<\infty\right\}.
\end{equation}
For $p=1$, an alternative norm on $\M_1(Z)$ can be defined as follows~\cite[Sect. 8.10(viii)]{bogachev-measure-theory-vol2}
\begin{equation}\label{eq:KR-norm}
    \norm{\mu}_\KRu \defeq \abs{\mu(Z)} + \sup \left\{\int_Z f \, \d\mu(z) \colon f(e) = 0, \, L(f) \leq 1\right\}.
\end{equation}
For balanced measures the norm~\eqref{eq:KR-norm} is known as the Kantorovich-Rubinstein norm~\cite[Sec.~VIII.4]{kantorovich-akilov} denoted by $\|\cdot\|_{\KR}$. In our notation $\norm{\cdot}_\KRu$ we emphasize the fact that the measure can be unbalanced. The space $\M_1(Z)$ is not complete under this norm, and its completion will be denoted by $\KRu(Z)$. The completion of the subspace of balanced measures with a finite first moment (which we denote by $\M_1^0(Z)$) in the norm \eqref{eq:KR-norm} is the more standard Kantorovich-Rubinstein space $\KR(Z)$, also known as Arens-Eells \cite{AreEel56} or Lipschitz-free \cite{godefroy2003lipschitz} space over the pointed metric space $Z$. We first note that if $Z$ is compact, every sequence of measures uniformly bounded in variation and converging in $\KRu$ also converges weakly$^*$ in $\mathcal{M}(Z)$.

\begin{lemma}\label{lem:KR_implies_w*}
    Let $Z$ compact, $M>0$ and $\{\mu_n\}_{n\in \N}\subset \mathcal{M}(Z)$ be a sequence with $\|\mu_n\|_{\TV}\leq M$ for all $n\in\N$ and such that $\|\mu_n - \mu^*\|_{\KRu}\to 0$ as $n\to \infty$ for some $\mu^*\in \mathcal{M}(Z)$. Then $\{\mu_n\}_{n\in \N}$ converges weakly$^*$ to $\mu^*$.
\end{lemma}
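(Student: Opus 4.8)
The plan is to unwind weak$^*$ convergence directly. Since $Z$ is compact we have $C_0(Z)=C(Z)$, $\M(Z)=C(Z)^*$, and moreover $d(\cdot,e)\le\diam(Z)<\infty$, so $\M(Z)=\M_1(Z)$ and $\|\cdot\|_\KRu$ is a genuine norm on all of $\M(Z)$; in particular $\|\mu_n-\mu^*\|_\KRu$ makes sense. Hence it suffices to prove $\int_Z\psi\,\d\mu_n\to\int_Z\psi\,\d\mu^*$ for every $\psi\in C(Z)$. I would do this in two steps: first for Lipschitz test functions, using nothing but the definition \eqref{eq:KR-norm} of $\|\cdot\|_\KRu$; then for general $\psi\in C(Z)$ by approximating uniformly by Lipschitz functions, where the uniform variation bound $\|\mu_n\|_\TV\le M$ is what lets us absorb the approximation error.

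For the first step, take $f\in\Lip(Z)$ and split $f=(f-f(e))+f(e)$, treating $f(e)$ as a constant function. Then for any $\nu\in\M(Z)$ one has $\int_Z f\,\d\nu=\int_Z(f-f(e))\,\d\nu+f(e)\,\nu(Z)$. Applying this with $\nu=\mu_n-\mu^*$: the function $g:=f-f(e)$ satisfies $g(e)=0$ and $L(g)=L(f)$, so (when $L(f)>0$) $\pm g/L(f)$ is admissible in the supremum in \eqref{eq:KR-norm}, giving $\bigl|\int_Z g\,\d(\mu_n-\mu^*)\bigr|\le L(f)\,\|\mu_n-\mu^*\|_\KRu$, while $\bigl|f(e)(\mu_n-\mu^*)(Z)\bigr|\le |f(e)|\,\|\mu_n-\mu^*\|_\KRu$ directly from \eqref{eq:KR-norm}; the case $L(f)=0$ (constant $f$) is covered by the second estimate alone. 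Altogether $\bigl|\int_Z f\,\d(\mu_n-\mu^*)\bigr|\le(|f(e)|+L(f))\,\|\mu_n-\mu^*\|_\KRu\to0$.

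For the second step, fix $\psi\in C(Z)$ and $\eps>0$. By the Stone–Weierstrass theorem the Lipschitz functions are dense in $C(Z)$ (they form a subalgebra containing the constants and separating points, e.g. via $z\mapsto d(z,z_0)$), so there is $f\in\Lip(Z)$ with $\|\psi-f\|_\infty\le\eps$. Then
\begin{align*}
\Bigl|\int_Z\psi\,\d(\mu_n-\mu^*)\Bigr|
&\le\Bigl|\int_Z(\psi-f)\,\d\mu_n\Bigr|+\Bigl|\int_Z f\,\d(\mu_n-\mu^*)\Bigr|+\Bigl|\int_Z(f-\psi)\,\d\mu^*\Bigr|\\
&\le\eps M+\Bigl|\int_Z f\,\d(\mu_n-\mu^*)\Bigr|+\eps\,\|\mu^*\|_\TV,
\end{align*}
where we used $\|\mu_n\|_\TV\le M$ and the finiteness of $\|\mu^*\|_\TV$. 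Letting $n\to\infty$ and invoking the first step, $\limsup_n\bigl|\int_Z\psi\,\d(\mu_n-\mu^*)\bigr|\le\eps(M+\|\mu^*\|_\TV)$, and since $\eps>0$ is arbitrary this limsup vanishes, i.e. $\mu_n\weakastto\mu^*$.

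The only real load-bearing hypothesis here is the equi-boundedness $\|\mu_n\|_\TV\le M$: the KR norm is strictly weaker than the TV norm, so without it the term $\int_Z(\psi-f)\,\d\mu_n$ cannot be controlled and the conclusion genuinely fails. Everything else is routine, but it is worth flagging that the density step is exactly where compactness of $Z$ enters — Lipschitz functions are not dense in $C_b(Z)$ for non-compact $Z$, which is precisely why the compactness results in the later sections must impose moment conditions rather than relying on such a simple argument.
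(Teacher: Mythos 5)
Your proof is correct and follows essentially the same two-step argument as the paper: first deduce convergence against Lipschitz test functions from the definition of the $\KRu$ norm, then extend to all of $C(Z)=C_0(Z)$ by uniform approximation with Lipschitz functions, using the bound $\|\mu_n\|_\TV\leq M$ and the finiteness of $\|\mu^*\|_\TV$ to absorb the approximation error. You merely spell out the first step (the splitting $f=(f-f(e))+f(e)$ and the resulting estimate $(|f(e)|+L(f))\|\mu_n-\mu^*\|_\KRu$) in more detail than the paper, which asserts it directly from the definition of the norm.
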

\begin{proof}
    By the definition of the $\KRu$ norm, $\norm{\mu_{n} - \mu^*}_\KRu \to 0$ implies that 
\begin{align}\label{eq:narrow_convergence}
    \int_Z f(z)\, \d\mu_{n}(z)\rightarrow \int_Z f(z)\, \d\mu^*(z)
\end{align}
for every $f\in \Lip(Z)$ and then, by density, \eqref{eq:narrow_convergence} holds for every function in $C_0(Z)$, i.e. $\mu_{n}\rightarrow\mu^*$ weakly$^*$. Indeed, given $f\in C_0(Z)$, for every $\epsilon>0$ there exists $g_\epsilon\in\Lip(Z)$ such that $\|f-g_\epsilon\|_{\infty}<\epsilon$. Furthermore, there exists $N_\epsilon\in\N$ such that $| \int_Z g_\epsilon(z)\, \d\mu_{n}(z)- \int_Z g_\epsilon(z)\, \d\mu^*(z)|<\epsilon$ for all $n>N_\epsilon$. Then, for all $n>N_\epsilon$ 
\begin{align*}
  &\left| \int_Z f(z)\, \d\mu_{n}(z)- \int_Z f(z)\, \d\mu^*(z)\right|\\
  &\leq \|f-g_\epsilon\|_\infty\|\mu_n\|_\TV+\left| \int_Z g_\epsilon(z)\, \d\mu_{n}(z)- \int_Z g_\epsilon(z)\, \d\mu^*(z)\right|+\|f-g_\epsilon\|_\infty\|\mu^*\|_\TV\\
  &\leq\epsilon M+\epsilon+\epsilon\|\mu^*\|_\TV,
\end{align*}
which proves that \eqref{eq:narrow_convergence} holds for all $f\in C_0(Z)$.
\end{proof}

\begin{remark}\label{rem:kr-vs-wass}
    The convergence in $\KR$ norm alone is not sufficient to conclude weak$^*$ convergence. Take for example $Z= [-1,1]$ and $e=0$. Consider $\mu_r := \frac{1}{|r|^s}(\delta_r - \delta_{0})$ for $r\neq 0$, with $s \in (0,1)$. Clearly, $\mu_r \to 0$ as $r \to 0$ in the space $\KR$, but, on the other hand, considering the continuous (but not Lipschitz) function $f(z) = |z|^s$ we have $\int_{[-1,1]} |z|^s \, \d \mu_r(z) = 2$ for all $r\neq 0$. 
\end{remark}

We now state a well-known key result for our analysis and provide its proof for the sake of completeness. We refer also to 
\footnote{\url{https://regularize.wordpress.com/2016/07/05/the-completion-of-the-radon-measures-with-respect-to-the-kantorovich-rubinstein-norm/}} 
for an alternative proof.

\begin{theorem}\label{thm:dualityKR}
   It holds that
    \begin{equation}
        (\KRu(Z))^* \cong \Lip(Z).
    \end{equation}
\end{theorem}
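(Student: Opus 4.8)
The plan is to identify the dual of $\KRu(Z)$ with $\Lip(Z)$ by first exhibiting a natural isometric embedding $\Lip(Z) \hookrightarrow (\KRu(Z))^*$ and then showing it is surjective. The embedding is the obvious one: a function $f \in \Lip(Z)$ acts on $\M_1(Z)$ by $\mu \mapsto \int_Z f \, \d\mu$, and since $\M_1(Z)$ is dense in $\KRu(Z)$ by construction, this functional extends uniquely to a bounded linear functional on all of $\KRu(Z)$ provided it is bounded with respect to $\|\cdot\|_\KRu$. So the first step is the norm identity $\sup\{ \int_Z f \, \d\mu : \|\mu\|_\KRu \le 1\} = \|f\|_\Lip$ for each $f \in \Lip(Z)$.

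For this identity I would use the decompositions recorded in Table~\ref{tab:lip}, namely $\KRu(Z) \cong \KR(Z) \oplus \R$ and $\Lip(Z) \cong \Lip_0(Z) \oplus \R$, which reduce matters to two facts. The first is the classical duality $\KR(Z)^* \cong \Lip_0(Z)$ for the Arens--Eells / Lipschitz-free space, for which the pairing of a balanced measure $\mu \in \M_1^0(Z)$ with $f \in \Lip_0(Z)$ is controlled exactly by $L(f) \|\mu\|_\KR$; the nontrivial direction (that the sup equals $L(f)$) follows by testing against elementary molecules $\frac{1}{d(z,z')}(\delta_z - \delta_{z'})$, whose $\KR$ norm is at most $1$ and on which $f$ takes the value $\frac{f(z) - f(z')}{d(z,z')}$, which can be made arbitrarily close to $L(f)$. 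The second fact handles the "mass" component: writing $\mu = \mu^0 + \mu(Z)\delta_e$ with $\mu^0 \in \M^0(Z)$ and $f = f^0 + f(e)$ with $f^0 \in \Lip_0(Z)$, the cross terms vanish because $\delta_e$ pairs with $f^0$ as $f^0(e) = 0$ and $\mu^0$ pairs with the constant $f(e)$ as $\mu^0(Z) = 0$, so $\int f \, \d\mu = \int f^0 \, \d\mu^0 + f(e)\mu(Z)$; taking the supremum over the unit ball of $\|\cdot\|_\KRu = \|\cdot\|_\KR + |\mu(Z)|$ then splits and yields $\max\{L(f), |f(e)|\} = \|f\|_\Lip$. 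This gives the isometric embedding.

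For surjectivity, let $F \in (\KRu(Z))^*$. Restricting $F$ to the dense subspace $\M_1(Z)$, and then to Dirac masses, define $f(z) \defeq F(\delta_z)$ for $z \in Z$; one checks $f(e) = F(\delta_e)$ and, using $\|\delta_z - \delta_{z'}\|_\KRu = \|\delta_z - \delta_{z'}\|_\KR \le d(z,z')$, that $|f(z) - f(z')| = |F(\delta_z - \delta_{z'})| \le \|F\| \, d(z,z')$, so $f \in \Lip(Z)$ with $\|f\|_\Lip \le \|F\|$. It then remains to verify $F(\mu) = \int_Z f \, \d\mu$ for all $\mu \in \M_1(Z)$: this holds for finitely supported measures by linearity, and the general case follows by approximating $\mu$ in $\|\cdot\|_\KRu$ by finitely supported measures and using continuity of both sides (the right side is continuous precisely because $f \in \Lip(Z)$, which is the functional just shown bounded). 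Since $\M_1(Z)$ is dense in $\KRu(Z)$, this determines $F$, so $F$ is the image of $f$.

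The main obstacle is the density/approximation point in the surjectivity argument: showing that every $\mu \in \M_1(Z)$ can be approximated in $\KRu$ norm by finitely supported measures, so that the identity $F(\mu) = \int f \, \d\mu$ propagates from atoms to all of $\M_1(Z)$ and hence to the completion. On a non-compact but locally compact, complete, separable geodesic space this requires a genuine argument — one truncates the mass of $\mu$ outside a large ball (controlling the error via the finite first moment), then partitions the remaining bounded region into small pieces and replaces $\mu$ restricted to each piece by a Dirac at a representative point, estimating the $\KR$ error by the diameter of the piece times the local mass. The geodesic and completeness hypotheses on $Z$ are what make the Kantorovich--Rubinstein distance between such a localized measure and its atomization behave well. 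Everything else is bookkeeping with the two orthogonal decompositions and the standard Lipschitz-free space duality, which I would cite or sketch quickly.
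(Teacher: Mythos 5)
Your proposal is correct, and its first half is the paper's argument almost verbatim: both proofs rest on the decompositions $\M_1(Z) \cong \M_1^0(Z) \oplus \R$ via $\mu \mapsto (\mu - \mu(Z)\delta_e, \mu(Z))$ and $\Lip(Z) \cong \Lip_0(Z) \oplus \R$, the observation that these pass to the completions so that $\KRu(Z) \cong \KR(Z) \oplus \R$, the $\ell^1$-vs-$\max$ pairing of the direct-sum norms, and the classical Arens--Eells duality $\KR(Z)^* \cong \Lip_0(Z)$. Where you diverge is surjectivity: the paper gets it for free, because it cites $\KR(Z)^* \cong \Lip_0(Z)$ as a full isometric \emph{isomorphism} (Weaver, Thm.~3.3) and then just uses that the dual of a direct sum is the direct sum of the duals; there is nothing left to prove. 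Your separate surjectivity argument --- defining $f(z) = F(\delta_z)$, checking $L(f) \le \|F\|$ from $\|\delta_z - \delta_{z'}\|_\KRu \le d(z,z')$, and propagating $F(\mu) = \int_Z f \,\d\mu$ from finitely supported measures to all of $\M_1(Z)$ by a truncate-and-atomize density argument --- is therefore logically redundant given what you already cite, since it amounts to re-proving the surjective half of the Weaver theorem. That said, it is a sound and genuinely self-contained alternative: it makes explicit where the hypotheses on $Z$ (completeness, local compactness, geodesic, finite first moment) enter, and the atomization construction you sketch is essentially the same finite-$\eps$-net device the paper later uses in the proof of its compactness theorem (Theorem~\ref{thm:compact-embedding-locally-compact}), so nothing in it would fail. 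The only stylistic caveat is that you should decide whether you are citing the Lipschitz-free duality (in which case drop the surjectivity section) or proving it (in which case the molecule computation for the norm identity and the Dirac-mass argument together constitute that proof and the citation becomes unnecessary).
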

\begin{proof}
    Every measure $\mu \in \M_1(Z)$ can be decomposed as
 $
        \mu = \mu_0 + \mu(Z)\delta_e,
   $
    where $\mu_0$ is balanced and $\delta_e$ is the Dirac delta at the base point $e$. This implies
    \begin{equation}\label{eq:KRu-directsum}
        \M_1(Z) \cong \M_1^0(Z) \oplus \R,
    \end{equation}
    where the norm on the direct sum is defined as $\norm{(\nu,t)}_{\M_1^0(Z) \oplus \R} \defeq \norm{\nu}_{\KR}  + |t|$. We observe that the isomorphism 
    \[
\M_1(Z)\ni\mu\mapsto \left( \mu-\mu(Z)\delta_e,\mu(Z)\right)\in \M_1^0(Z)\oplus\R
    \]
    maps any Cauchy sequence in $(\M_1(Z),\|\cdot\|_\KRu)$ in a pair of Cauchy sequences in $(\M_1^0(Z),\|\cdot\|_\KR)$ and $\R$. Vice versa, its inverse
    \[
\M_1^0(Z)\oplus\R\ni(\nu,t)\mapsto \nu+t\delta_e\in \M_1(Z)
    \]
    maps any pair of Cauchy sequences in $(\M^0_1(Z),\|\cdot\|_\KR)$ and $\R$ to a Cauchy sequence in $(\M_1(Z),\|\cdot\|_\KRu)$. As a consequence, the completion of $\M_1(Z)$ with respect to the KR-norm~\eqref{eq:KR-norm} is isometrically isomorphic to the direct sum of the completion of $\M_1^0(Z)$ with respect to the KR-norm and $\R$
    \begin{equation}\label{eq:KRudirectsum}
\KRu(Z)=\overline{\M_1(Z)}\cong \overline{\M_1^0(Z)}\oplus\R=\KR(Z)\oplus\R.
    \end{equation}
 Then, using the fact that $\KR(Z)^* \cong \Lip_0(Z)$~\cite[Thm. 3.3]{weaver:2018}, we derive
    \begin{equation*}
        (\KRu(Z))^* \cong \KR(Z)^* \oplus \R \cong \Lip_0(Z) \oplus \R,
    \end{equation*}
    where 
    the norm on $\Lip_0(Z) \oplus \R$ is defined as $\norm{(f,t)}_{\Lip_0(Z) \oplus \R} \defeq \max\left\{|t|,L(f) \right\}$. Clearly, $\Lip_0(Z) \oplus \R \cong \Lip(Z)$ through the isometric isomorphism 
    \[
    \Lip_0(Z) \oplus \R\ni(f,t)\mapsto f+t\in \Lip(Z),
    \]
which yields 
\[
(\KRu(Z))^* \cong  \Lip_0(Z) \oplus \R
\cong
\Lip(Z).
\]
\end{proof}

\begin{remark}\label{rem:unique_predual}
    In many situations, e.g. when $Z$ has a finite diameter or when $Z$ is a Banach space, $\Lip_0(Z)$ has a strongly unique predual~\cite[Sect. 3.4]{weaver:2018}.
\end{remark}

\subsection{\texorpdfstring{$\KRu$}{KRu}-norm based reproducing kernel Banach spaces}
Given a set of weights $\Theta \subset \R^{d+1}$ and choosing a base point $e \in \Theta$ (for a discussion on the choice of such base point we refer to Section~\ref{sec:opttrans}) we can interpret \eqref{eq:infinite_width_nn} as the dual pairing \eqref{eq:pairingKR} and introduce the space of functions 
\[
\mathcal{B}^{\KR}_\sigma=\{f_\mu:\mu\in\KRu(\Theta)\},\qquad f_\mu(x)=\scal{\mu}{\sigma(\langle\cdot,x\rangle)}{\KRu(\Theta)}{\Lip(\Theta)}.
\]
By Proposition~\ref{prop:RKBS}, $\mathcal{B}^{\KR}_\sigma$ is a reproducing kernel Banach space endowed with the norm
\[
\|f\|_{\mathcal{B}^{\KR}_\sigma}=\inf\{\|\mu\|_{\KRu}:f=f_\mu\}.
\]
Since the space $\mathcal{B}^{\KR}_\sigma$ is parametrized by the Kantorovich-Rubinstein space $\KRu(\Theta)$, the learning problem
\begin{align}\label{eq:risk_RKBS}
    \inf_{f \in \mathcal{B}^{\KR}_\sigma} \frac{1}{N} \sum_{i=1}^N L(y_i, f(x_i)) + \|f\|_{\mathcal{B}^{\KR}_\sigma}
\end{align}
can be reformulated as a minimization problem over $\KRu(\Theta)$
\begin{align}\label{eq:risk_KR}
    \inf_{\mu \in \KRu(\Theta)} \frac{1}{N} \sum_{i=1}^N L(y_i, f_\mu(x_i)) + \|\mu\|_{\KRu}.
\end{align}
In fact, if $\mu^*$ is a minimizer of \eqref{eq:risk_KR}, then $f_{\mu^*}$ is a minimizer of \eqref{eq:risk_RKBS}.
There are two problems with this formulation. First, elements of $\KRu(\Theta)$ may be too irregular to be useful in the context of neural networks; indeed, $\KRu(\Theta)$ contains elements which are not measures if $\Theta$ is not finite\footnote{\url{https://regularize.wordpress.com/2016/07/05/the-completion-of-the-radon-measures-with-respect-to-the-kantorovich-rubinstein-norm-i/}} (see also \cite[Theorem~3.19]{weaver:2018} considering \cref{rem:unique_predual}), thus losing the integral form \eqref{eq:infinite_width_nn}. An example is given below. 

\begin{example}Consider the sequence $\{\mu_n\}_{n\in\N}$ defined by
\begin{align}
    \mu_n = \sum_{i=1}^n (\delta_{\theta_i}-\delta_{0}) \in \mathcal{M}(\R), 
\end{align}
with $\theta_{i+1} < \frac{1}{2}\theta_i$ for every $i\in\N$ with $0 < \theta_i \leq 2$. Let $\mu^*$ be the limit of $\{\mu_n\}_{n\in\N}$ in $\KRu(\R)$. Note that such limit exists since $\{\mu_n\}_{n\in\N}$ is Cauchy in $\KRu(\R)$. Indeed, for $n > m$ and every $\varphi \in {\rm Lip}_0(\R)$ such that $L(\varphi) \leq 1$, it holds
\begin{align}
    \left|\int_{\R}\varphi(\theta) \d(\mu_n - \mu_m)(\theta)\right|  \leq \sum_{i=m}^n |\varphi(\theta_i)-\varphi(0)| \leq \sum_{i=m}^n \theta_i \leq \sum_{i=m}^{\infty} \theta_i \rightarrow 0
\end{align}
as $m \rightarrow +\infty$ since it is the tail of a converging series, due to the condition $\theta_{i+1} < \frac{1}{2}\theta_i$.
It remains to show that $\mu^*$ is not an element of $\mathcal{M}(\R)$. Let $\{\varphi_k\}_{k\in\N}$ be a sequence of continuous functions defined as $\varphi_k(\theta) = 0$ if $\theta \leq 0$, $\varphi_k(\theta) = k\theta$ if $0\leq \theta \leq 1/k$, $\varphi_k(\theta) = 1$ for $1/k \leq \theta \leq 2$ and extended continuously for $\theta \geq 2$ so that $\|\varphi_k\|_\infty = 1$ and $\lim_{\theta\rightarrow +\infty} \varphi_k(\theta) = 0$. Then, for every $k\in\N$, we have
\begin{align*}
    \scal{\mu^*}{\varphi_k}{\KRu(\R)}{\Lip(\R)} &= \lim_n  \sum_{i=1}^n (\varphi_k(\theta_i)-\varphi_k(0))  = \hat i(k) + k\sum_{i=\hat i(k)}^\infty \theta_i 
\end{align*}
where $\hat i(k) = \min\{i : \theta_i \leq \frac{1}{k}\}$. Therefore, since $\scal{\mu^*}{\varphi_k}{\KRu(\R)}{\Lip(\R)} \rightarrow +\infty$ as $k \rightarrow +\infty$ and $\|\varphi_k\|_\infty = 1$ for every $k$ we conclude that $\mu^*$ is not a bounded measure.  
\end{example}

Moreover, the $\KRu$ topology can behave in undesirable ways in the absence of a bound in the $\TV$ norm. For example, rescaled dipoles $\frac{1}{d(\theta,\theta_0)^s}(\delta_\theta - \delta_{\theta_0})$ with $s < 1$ converge to zero in $\KRu(\Theta)$ as $\theta \to\theta_0$, but in the context of neural networks as in \eqref{eq:finitenetwork} and \eqref{eq:meanfield} they correspond to two neurons having increasingly similar parameters $(w,b)$ but with opposite signs, and with output multiplied by ever larger coefficients $a$.\looseness=-1
   
The second problem is technical. The space $\KRu(\Theta)$ is not a dual space, in general, hence we cannot apply the Banach-Alaoglu theorem to guarantee (subsequential) convergence of minimizing sequences in the weak$^*$ topology. For example, the case of $\Theta = \R^{d+1}$ with the standard metric does not fall into the ones where $\KRu(\Theta)$ has a known predual. There are cases when $\KRu(\Theta)$ indeed has a predual (the so-called little Lipschitz space, see~\cite[Ch. 4]{weaver:2018}), for example when $\Theta = \R^{d+1}$ with the metric $d(\theta,\theta') \defeq \norm{\theta-\theta'}^s$ for $0 < s < 1$, but this excludes interesting activation functions such as ReLU (one can easily check that it is not Lipschitz with respect to this metric). To showcase the bad behavior of sequences in $\KRu(\Theta)$ in absence of a bound on the TV norm, we present an example where a sequences of dipoles does not have any subsequence converging weakly in $\KRu(\R)$.

\begin{example}\label{ex:dipolesnotconv}
We consider again a sequence of dipoles but with $s = 1$, that is, $\frac{1}{d(\theta,\theta_0)}(\delta_\theta - \delta_{\theta_0})$ with $\Theta = \R$, $e = \theta_0 = 0$, and $\theta_n = 1/n$. In particular, we consider the sequence $\{\mu_n\}_{n\in\N}$ in $\KR(\R)$ given as $\mu_n = n (\delta_{1/n}-\delta_0)$, for which clearly $\|\mu_n\|_{\KR}=1$. Given any subsequence $\{\mu_{n_k}\}_{k\in\N}$, we can take a further not relabelled subsequence to assume that $n_{k+1} \geq 3 n_k$, and based on it define the function 
\[f(t):=(-1)^k \left( \frac{1}{n_k} -\left(|t|- \frac{1}{n_k}\right)\frac{n_{k+1} + n_k}{n_k - n_{k+1}}\right) \quad \text{if }|t| \in \left[\frac{1}{n_{k+1}}, \frac{1}{n_k}\right),\]
extended continuously by $0$ at $t = 0$, for which by the condition $n_{k+1} \geq 3 n_k$ we get 
\[L(f) = \abs{\frac{n_{k+1} + n_k}{n_k-n_{k+1}}} = \frac{n_{k+1} + n_k}{n_{k+1}-n_k} \leq 2,\]
so that $f \in \Lip_0(\R)$. Using it, we have
\[\int_{\R} f(t) \d\mu_{n_k}(t) = n_k \left( f\big(1/n_k\big)-f(0)\right) = (-1)^k,\]
so the original subsequence, which was arbitrary, does not converge weakly in $\KRu(\R)$.
\end{example}

Despite all such negative results, it is known that for a compact space $\Theta$, the unit ball with respect to the total variation norm is strongly compact in the Kantorovich-Rubinstein space~\cite[Thm. VIII.4.3]{kantorovich-akilov}. Moreover, we will show that for a non-compact $\Theta$ the result holds under certain moment conditions on the measures, which are also necessary to ensure such compactness. 
For this reason we modify the regularizer in~\eqref{eq:risk_KR} by penalizing the total variation and $p$-moment of the measures. The previous observations lead to the following ERM problem.

\begin{definition}[$\KRu$-regularized empirical risk minimization]\label{def:KRrisk}
We consider for $p>0$ the following minimization problem:    
\begin{align}\label{eq:risk_new}
    \inf_{\mu \in \KRu(\Theta)} \frac{1}{N} \sum_{i=1}^N L(y_i, f_\mu(x_i))  +  G_{\alpha,\beta}(\mu) 
\end{align}
where $\alpha \geq 0$ and $\beta >0$ are parameters
and 
the regularizer $G_{\alpha,\beta} : \KRu(\Theta) \rightarrow [0,\infty]$ is defined by 
\begin{equation}\label{eq:G-definition-Rd}
    G_{\alpha,\beta}(\mu) \defeq
    \begin{cases}
        \alpha\|\mu\|_{\KRu} + \beta \displaystyle \int_\Theta ( 1 + d(\theta,e)^p ) \, \rm d |\mu|(\theta)   &  \text{if } \mu \in \mathcal{M}_p(\Theta), \\
         +\infty & \text{otherwise.} 
    \end{cases}
\end{equation}
\end{definition}
Note that if $\mu \in \mathcal{M}_p(\Theta)$,  then $G_{\alpha,\beta}(\mu)$ can be equivalently written as
\begin{align}
   G_{\alpha,\beta}(\mu) =  \alpha\|\mu\|_{\KRu} + \beta \|\mu\|_{\TV} + \beta \displaystyle \int_\Theta d(\theta,e)^p  \, \d\abs{\mu}(\theta).
\end{align}

The goal of the next sections is to analyze the variational problem \eqref{eq:risk_new}. 

\begin{remark}[The need of $p>1$]
We remark that the choice $p>1$ in the definition of $G_{\alpha,\beta}$ will be crucial for the well-posedness of the variational problem \eqref{eq:risk_new}.
Indeed, we will show that if $p>1$ the regularizer $G_{\alpha,\beta}$ enforces compactness on \eqref{eq:risk_new}, which yields existence of minimizers of \eqref{eq:risk_new} by the direct method of calculus of variations. On the other hand, if $p=1$, we lose any a priori compactness for the variational problem \eqref{eq:risk_new} as highlighted in the examples in Section~\ref{sec:nonexistence}. More crucially, in many cases of interest, this lack of compactness leads to non-existence of minimizers of \eqref{eq:risk_new}, making the ERM problem \eqref{eq:risk_new}  ill-posed, see Section~\ref{sec:nonexistence}. 
\end{remark}

\subsubsection{Optimal transport interpretation of the \texorpdfstring{$\KRu$}{KRu} norm and choice of base point}\label{sec:opttrans}
The penalization of the ${\rm KR}$-norm in \eqref{eq:risk_new} can be interpreted through the lens of optimal transport. Indeed, if $\mu \in \M^0(Z)$ is a balanced measure, i.e. such that $\mu(Z) = 0$, then 
\begin{align}\label{eq:otformula}
    \|\mu\|_{{\rm KRu}} = \|\mu\|_{{\rm KR}} =   W_1(\mu_+,\mu_-)
\end{align}
where $W_1$ is the 1-Wasserstein distance and $\mu_+$ and $\mu_-$ are the positive and the negative parts of $\mu$, respectively. In particular, the KR-norm in \eqref{eq:risk_new}  penalizes the optimal transport cost between the positive and the negative parts of $\mu$. If $\mu(Z) \neq 0$, then the characterization \eqref{eq:KRu-directsum} implies $\|\mu\|_{\KRu} = |\mu(Z)|+ \|\mu - \mu(Z)\delta_e\|_{\KR},$ which means that the excess of mass in the positive or the negative part of $\mu$, depending on the sign of $\mu(Z)$, is necessarily transported to the  base point $e$. 

This is a simple way to formulate unbalanced optimal transport, but by far not the only one. A more involved alternative in the signed measure setting is to use an infimal convolution between the 1-Wasserstein distance $W_1(\mu_+,\mu_-)$ and the total variation norm $\|\mu\|_{\TV}$, as first introduced in \cite{hanin1992} and whose sparsity aspects were studied in \cite{CarIglWal24p}. That type of formulation instead aims to make an optimal decision between transporting mass between $\mu^-$ and $\mu^+$, or creating it in one of them.

Naturally, for any particular application the base point $e$ needs to be chosen, in particular in the case where $\Theta$ is a parameter space for the mean field representation \eqref{eq:meanfield} with $\Theta = \R^d \times \R$ generalizing the discrete case in \eqref{eq:finitenetwork}, and the related regularized problems of the form \eqref{eq:risk_new}. We argue that in this case the canonical choice of base point is $e=(0,0) \in \R^d \times \R$, since the term $\mu(\Theta)\delta_{(0,0)}$ only affects the representation $f_\mu$ by adding a constant, which vanishes if $\sigma(0)=0$.

\subsubsection{Interpretation of the \texorpdfstring{$G_{\alpha,\beta}$}{Ga,b} regularization and comparison with commonly used regularizers}
Analyzing the influence of the regularizer $G_{\alpha,\beta}$ on the parameter space yields an interpretation of both the parameter configurations it promotes and its impact on the associated function $f_{\mu}$.
Below we summarize the main effects and the resulting bias and compare $G_{\alpha,\beta}$ with regularizers used in practice.
\paragraph{Function-space view}
On bounded input sets, say $ X \subset B(0,R) \times \{1\} $, and setting $ e = (0,0) $,
the mappings $ \theta \mapsto \sigma( \langle \theta , x \rangle  ) $ are uniformly Lipschitz
with $\Lip$ norm (defined as in \eqref{eq:norm-Lip-e}) bounded by $ \max\{ | \sigma(0) | , L(\sigma) \sqrt{R^2 + 1} \} $.
Hence, by the Kantorovich--Rubinstein duality, we obtain $ \| f_\mu \|_\infty \lesssim \|\mu\|_{\KRu} $.
Thus, the $\KRu$ norm directly controls the uniform amplitude of the solution.
Moreover, since for $p\geq 1$ it holds
\[\begin{aligned}
 | f_\mu(x) - f_\mu(x') | & \leq L(\sigma) \| x - x' \| \int_{\Theta} \| \theta \| \d|\mu| (\theta) \\
 & \leq L(\sigma) \| x - x' \| \int_{\Theta} (1+ \| \theta \|^p) \d|\mu| (\theta),
\end{aligned}\]
the total variation of $\mu$ together with its $p$-moment (with $p\geq 1$) bounds the Lipschitz constant of $f_\mu$.
Finer functional properties can be induced in specific cases.
For instance, in ReLU networks the $\TV$ norm controls the total mass of the kinks, and together with the $p$-moment they control their steepness, and the $\KRu$ norm governs their spatial arrangement, providing a measure of geometric complexity.
For general Lipschitz activations, a similar behavior occurs, with kinks replaced by localized features, e.g. curvature bumps.
\paragraph{Parameter-space view} In \Cref{sec:opttrans}, we recalled the connection between the $\KRu$ norm and optimal transport, which helps clarify what this norm promotes. If the base point $e$ is such that $\sigma(\langle e, x\rangle)=0$ for all $x$ (e.g.\ $e=(0,0)$ for ReLU, or any activation with $\sigma(0)=0$), then the $\KRu$ norm admits an interpretation as a cost of creating mass, together with a penalty for positive and negative contributions that are far apart in parameter space. On the other hand, the $\TV$ norm promotes sparsity, encouraging representations given by few atoms. While the total variation controls the total mass of the measure,
the $p$-moment controls its spread,
discouraging neurons from populating widely dispersed regions of parameter space.
\paragraph{Inductive bias}
Overall, the combination of all terms in $G_{\alpha,\beta}$ produces a function class with tightly controlled amplitude, slope, and geometric complexity,
with parameter configurations biased towards sparsity, concentration, and transport-optimality.
\paragraph{Relation to standard regularization techniques}
The regularizer $G_{\alpha,\beta}$ combines, within a single convex functional, several effects that in practical deep learning are usually implemented separately.
The total variation term serves as an infinite-width analogue of an $\ell^1$ penalty, akin to Barron norms and $\ell^1$-path norm in finite networks.
The $p$-moment controls the effective radius of the parameter distribution and bounds the Lipschitz constant of the realized function, thereby reproducing the regularizing influence of weight decay, spectral normalization, and gradient-norm penalties.
The unbalanced Kantorovich--Rubinstein term acts as a signed optimal-transport cost on parameter space, penalizing geometric displacement of mass and controlling the amplitude of the function.
This mechanism parallels Wasserstein penalties and Lipschitz constraints used in adversarial training and generative modeling \cite{arjovsky2017wasserstein, tolstikhin2017wasserstein, mukherjee2021end, patrini2020sinkhorn}.
In contrast to these practical techniques, $G_{\alpha,\beta}$ offers a unified framework that simultaneously enforces sparsity, smoothness, amplitude control, and geometric compactness of the infinite-width representation.

\subsubsection{Machine learning applications}\label{sec:applications}

Beyond the basic ERM problem \eqref{eq:risk_new}, using the $\KRu$ norm in the objective lends itself to suitable applications for the so-called distillation of infinitely wide neural networks, and their fusion.

\medskip 

\textbf{Distillation of an infinitely wide neural networks.} In classical neural networks literature, the goal of distillation is to transfer the ability of a given pre-trained network called \emph{teacher} to a smaller network called \emph{student} \cite{hinton2015distilling, bucilua2006model}. Several approaches to distillation and knowledge transfer have been later proposed exploiting optimal transport techniques \cite{li2020representation}. The popularity of optimal transport for distillation and compression can be justified by thinking that the student network should have weights that are close to the teacher network and therefore, in a measure theoretical sense, optimal transport metrics are the natural ones to achieve such an outcome.  
Distillation can be achieved naturally by generalizing our model \eqref{eq:risk_new} as follows 
\begin{align}\label{eq:risk_distillation}
    \inf_{\mu \in \KRu} \frac{1}{N} \sum_{i=1}^N L(y_i, f_\mu(x_i)) + \alpha\|\mu - \mu^*\|_{\KRu}+\beta\int_\Theta (1+ d^p(\theta,e)) \, \d\abs{\mu - \mu^*},
\end{align}
where $\mu^*$ is the teacher network. The penalization $\int_\Theta (1+d^p(\theta,e)) \, \d\abs{\mu - \mu^*}(\theta)$ is promoting further sparsity in the difference between the solution and the teacher network, therefore reducing the complexity of the distilled network. The term $\|\mu - \mu^*\|_{\KRu(\Theta)}$ penalizes transport between the weights of the teacher and the student networks and, in view of the duality of the previous section, enforces similarity of the resulting representations. Further training data could be used to refine the student network in the loss, resulting in the additional empirical risk loss $\frac{1}{N} \sum_{i=1}^N L(y_i, f_\mu(x_i))$.

\medskip

\textbf{Fusion of infinitely wide neural networks.}
Combining machine learning models in such a way that the combined network model is able to retain the ability of each single component is a crucial task in learning. Optimal transport approaches have been successfully proposed to achieve it \cite{singh2020model, imfeld2024transformer, akash2022wasserstein}. 
Fusion of pre-trained infinite wide neural networks can be achieved naturally by our model \eqref{eq:risk_new} by considering 
\begin{align}\label{eq:risk_fusion}
    \inf_{\mu \in \KRu(\Theta)} \frac{1}{N} \sum_{i=1}^N L(y_i, f_\mu(x_i)) + \alpha \sum_{i=1}^K \|\mu - \mu_i^*\|_{\KRu} +\beta \int_\Theta (1+ d^p(\theta,e)) \, \d\abs{\mu},
\end{align}
where $(\mu_i^*)_{i=1,\ldots,K}$ are the pre-trained neural networks that we would like to merge.
In particular, it is worth noting that \eqref{eq:risk_fusion} can be interpreted as the computation of a perturbed $\KR$-barycenter for the pre-trained neural networks $\mu_i^*$.


\subsection{The case \texorpdfstring{$p=1$}{p=1} and \texorpdfstring{$\Lip^*$}{Lip*}}

The reader could wonder why, instead of the dual pairing from the previous section, one should not consider the duality pairing
\begin{align}\label{eq:pairinglip*}
\scal{\sigma(\langle\cdot,x\rangle)}{\mu}{\Lip(\Theta)}{\Lip^*(\Theta)},
\end{align}
where $\mu \in \mathcal{M}_1(\Theta)$ is embedded in $\Lip(\Theta)^*$ through the mapping $\mu\mapsto I_\mu$ defined as $I_\mu(f)=\int_\Theta f(\theta)\text{d}\mu(\theta)$,
hoping to apply Banach-Alaoglu theorem to infer existence of minimizers. In particular, interpreting \eqref{eq:infinite_width_nn} as the dual pairing \eqref{eq:pairinglip*}, 
one can set up an ERM problem in  $\mathcal{M}_1(\Theta)$ regularized with \begin{equation*}
    G(\mu) =  \int_\Theta ( 1 + d(\theta,e)^p ) \, \d\abs{\mu}(\theta)
\end{equation*}
for $p \geq 1$. Since $\|I_\mu\|_{\Lip^*(
\Theta)} \leq G(\mu)$ and thus the sublevel sets of $G$ are bounded in $\Lip^*(\Theta)$, Banach-Alaoglu theorem applies, implying that the sublevel sets of $G$ are weak$^*$ relatively compact.
The main obstruction to this approach is that the image of the embedding $\mathcal{M}_1(\Theta)\ni\mu\mapsto I_\mu\in \Lip(\Theta)^*$ is not weak$^*$ closed in $\Lip(\Theta)^*$. A possible solution would be to look for generalized minimizers in $\Lip(\Theta)^*$, not necessarily belonging to $\M_1(\Theta)$. This would require extending $G$ to all $\Lip(\Theta)^*$ in a way which is weak$^*$ lower semicontinuous in $\Lip(\Theta)^*$. This is however not immediate, since there is no obvious analogue of the total variation measure $|\mu|$ for an arbitrary element of $\Lip(\Theta)^*$. Additionally, $\Lip(\Theta)$ is typically not separable, which means that the compactness from Banach-Alaoglu does not imply sequential compactness. The lack of separability can be seen just considering the set 
\[\Lip(\R) \supset \{z\mapsto (z-t)^+\}_{t \in (a,b)} \quad \text{with }a,b \in \R,\]
on which no countable set can be dense in the topology of $\|\cdot\|_{\Lip}$, since
\[L\big((\cdot-s)^+ - (\cdot-t)^+\big)=1 \quad\text{for all }s,t \in \R \text{ with }s < t.\]
The following example highlights further the pathologies that can arise:

\begin{example}\label{ex:Lipdualnotconv}With $\Theta=\R$ and $e=0$, define the sequence $\{\mu_n\}_{n\in\N}$ given by $\mu_n := \frac{1}{n}\delta_n \in \M_1(\R),$
for which no subsequence converges weak$^*$ in $\Lip(\R)^*$. To see this, we can use a construction almost identical to the one used in Example \ref{ex:dipolesnotconv}, for which we fix a strictly increasing sequence of indices $n_k$ with $k \in \N$. Taking a not relabelled subsequence, we may always assume that $n_{k+1} \geq 3 n_k$, and based on it define a function $g: \R \to \R$ as
\[g(t):=(-1)^k \left( n_k -\big(|t|- n_k\big)\frac{n_{k+1} + n_k}{n_{k+1}-n_k}\right) \quad \text{if }|t| \in [n_k, n_{k+1}),\]
in which for convenience we denote $n_0 = 0$. Then $g(0)=0$, and by the condition $n_{k+1} \geq 3 n_k$ we have $L(g) = \frac{n_{k+1} + n_k}{n_{k+1}-n_k} \leq 2$, so $g \in \Lip_0(\R)$. But
\[\int_\R g(t) \d \mu_{n_k}(t) = \frac{1}{n_k}g(n_k)= (-1)^k,\]
implying that the original subsequence cannot converge weak$^*$ in $\Lip(\R)^*$. Note that this function has a similar behavior and interplay with $\mu_n$ as in the constructions in \cref{ex:dipolesnotconv}, but at infinity instead of the origin and adapted to the particular choice of indices $n_k$. On the other hand, we have $\|\mu_n\|^* \leq G(\mu_n) \leq 2$, so by Banach-Alaoglu we have that there is a subnet $\{n_{h(i)}\}_{i \in I}$ indexed by some directed set $I$ such that
\[\wstlim_{i \in I} \frac{1}{n_{h(I)}}\delta_{n_{h(I)}} = \varrho \quad \text{ for some }\varrho \in \Lip_0(\R)^*.\]
\end{example}


\section{Compactness in \texorpdfstring{$\KRu$}{KRu}-norm}\label{sec:comp}

In this Section~we prove a general compactness result in the KRu norm under total variation and moment bounds, see Theorem~\ref{thm:compact-embedding-locally-compact}. This investigation was motivated by the variational problem \eqref{eq:risk_new} but has an interest in its own right. For this reason, we devote this Section~to presenting the result with its proof and then show its consequences for the optimization problem \eqref{eq:risk_new} in a separate section. 

First, we recall that if $Z$ is compact then the closed unit ball in $\M(Z)$ is compact with respect to the Kantorovich-Rubinstein norm $\|\cdot\|_{\KR}$ (note that for a compact $Z$ the requirement on finite first moment is redundant). This is a consequence of \cite[Thm. VIII.4.3]{kantorovich-akilov}. Therein, only  balanced measures are considered. For the sake of completeness, we provide a proof for the unbalanced setting and the norm $\|\cdot\|_{\KRu}$.

\begin{theorem}\label{thm:compact-embedding-compact}
    Let $Z$ be a compact metric space. Then the following set is compact in $\KRu(Z)$
    $$B = \{\mu \in \M(Z) \colon \norm{\mu}_\TV \leq 1\}.$$
\end{theorem}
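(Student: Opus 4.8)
The plan is to establish sequential compactness of $B$ in $\KRu(Z)$, which is equivalent to compactness since $\KRu(Z)$ is a metric space. So I would take an arbitrary sequence $\{\mu_n\}_{n\in\N}\subset B$ and extract a subsequence converging in $\KRu(Z)$ to an element of $B$. The first step is soft: since $Z$ is a compact metric space, $C(Z)=C_0(Z)$ is separable, so the sequential Banach--Alaoglu theorem in $\M(Z)\cong C(Z)^*$ yields a subsequence (not relabelled) and some $\mu^*\in\M(Z)$ with $\mu_n\weakastto\mu^*$; by weak$^*$ lower semicontinuity of the total variation norm, $\|\mu^*\|_\TV\le\liminf_n\|\mu_n\|_\TV\le 1$, hence $\mu^*\in B$. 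Recall that for compact $Z$ one has $\M(Z)=\M_1(Z)\subset\KRu(Z)$, so $B\subset\KRu(Z)$ and $\mu^*$ is a legitimate candidate limit.

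The heart of the proof is then to upgrade weak$^*$ convergence to convergence in $\KRu$-norm. I would isolate this as the auxiliary claim: if $\|\nu_n\|_\TV\le 2$ and $\nu_n\weakastto 0$ in $\M(Z)$, then $\|\nu_n\|_\KRu\to 0$; applying it to $\nu_n=\mu_n-\mu^*$ finishes the argument. Splitting the norm according to its definition,
\[
\|\nu_n\|_\KRu=|\nu_n(Z)|+\sup\left\{\int_Z f\,\d\nu_n: f(e)=0,\ L(f)\le 1\right\},
\]
the mass term $|\nu_n(Z)|\to 0$ because the constant $1\in C(Z)$. For the supremum --- and this is exactly where compactness of $Z$ is used --- the competitor class $\calF:=\{f\in\Lip(Z):f(e)=0,\ L(f)\le 1\}$ is relatively compact in $(C(Z),\|\cdot\|_\infty)$: its elements are equi-Lipschitz, hence equicontinuous, and uniformly bounded by $\diam(Z)<\infty$ since $|f(z)|=|f(z)-f(e)|\le d(z,e)\le\diam(Z)$, so Arzel\`a--Ascoli applies; moreover $\calF$ is closed in $C(Z)$ (a uniform limit of functions in $\calF$ still vanishes at $e$ and has Lipschitz constant $\le 1$), so $\calF$ is compact in $C(Z)$. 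Given $\eps>0$, cover $\calF$ by finitely many uniform $\eps$-balls centered at $g_1,\dots,g_m\in\calF$; then for any $f\in\calF$, picking $g_j$ with $\|f-g_j\|_\infty<\eps$,
\[
\int_Z f\,\d\nu_n\le\|f-g_j\|_\infty\|\nu_n\|_\TV+\int_Z g_j\,\d\nu_n\le 2\eps+\max_{1\le j\le m}\left|\int_Z g_j\,\d\nu_n\right|,
\]
and since each $g_j\in C(Z)$ and $\nu_n\weakastto 0$, letting $n\to\infty$ gives $\limsup_n\sup_{f\in\calF}\int_Z f\,\d\nu_n\le 2\eps$; as $\eps>0$ is arbitrary the supremum tends to $0$, proving the claim.

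Putting the two steps together, the extracted subsequence satisfies $\|\mu_n-\mu^*\|_\KRu\to 0$ with $\mu^*\in B$, so $B$ is sequentially compact, hence compact, in $\KRu(Z)$. The only genuinely nontrivial ingredient is the Arzel\`a--Ascoli compactness of the test class $\calF$ in $C(Z)$, and it is precisely this that breaks down when $Z$ is unbounded or merely locally compact --- both because $\diam(Z)$ may be infinite and because equicontinuity no longer forces precompactness --- so recovering a substitute for this step under moment bounds is what the non-compact version of the theorem will have to supply.
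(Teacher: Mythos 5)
Your proof is correct, but it takes a genuinely different route from the paper's. The paper decomposes $\mu_n = \mu_n^0 + \mu_n(Z)\delta_e$ into a balanced part plus a point mass at the base point, invokes the classical Kantorovich--Akilov compactness theorem for the balanced TV-ball in $\KR(Z)$ as a black box to get strong $\KR$-convergence of $\mu_{n_k}^0$, and then verifies $\mu^*\in B$ via \cref{lem:KR_implies_w*} and weak$^*$ lower semicontinuity of $\norm{\cdot}_\TV$. You instead extract the limit first by sequential Banach--Alaoglu in $\M(Z)\cong C(Z)^*$ and then upgrade weak$^*$ convergence to $\KRu$-convergence by showing that the test class $\calF=\{f: f(e)=0,\ L(f)\le 1\}$ is compact in $(C(Z),\norm{\cdot}_\infty)$ via Arzel\`a--Ascoli, so that a finite $\eps$-net of test functions controls the supremum in the $\KRu$ norm uniformly over $\calF$. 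Your auxiliary claim is precisely the converse of the paper's \cref{lem:KR_implies_w*}, and together they show that the weak$^*$ and $\KRu$ topologies coincide on TV-bounded sets when $Z$ is compact --- a point the paper only makes implicitly. What each approach buys: the paper's argument is shorter and sets up the decomposition $\mu = \mu^0 + \mu(Z)\delta_e$ that is reused in the non-compact \cref{thm:compact-embedding-locally-compact}, whose proof essentially reconstructs the $\eps$-net argument behind the cited classical theorem; your argument is fully self-contained and elementary, and your closing remark correctly identifies that it is the uniform bound $|f(z)|\le\diam(Z)$ and the precompactness of $\calF$ that fail in the non-compact case, which is exactly the gap the moment bounds of Section~\ref{sec:comp} are designed to fill.
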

\begin{proof}
Let $\{\mu_n\}_{n\in\N}$ be a sequence in $B$. Note that, as in the proof of \cref{thm:dualityKR}, for every $n\in\N$ we can write 
$\mu_n = \mu^0_n + \mu_n(Z) \delta_e,$
    where $\mu_n^0\in\mathcal{M}^0(Z)$.
Therefore, we have that $
   \|\mu_n^0\|_\mathcal{\TV} \leq \|\mu_n\|_\mathcal{\TV} + |\mu_n(Z)| \leq 2$. By \cite[Thm. VIII.4.3]{kantorovich-akilov}, the unit ball $\{\nu \in \M^0(Z) \colon \norm{\nu}_\TV \leq 1\}$ is strongly compact in $\KR(Z)$ and thus there exist a subsequence $\{\mu_{n_k}\}_{k\in\N}$, $\mu^0\in\M^0(Z)$ and $t\in\R$ such that $\mu^0_{n_k} \rightarrow \mu^0$ in the norm $\|\cdot\|_{\KR}$ and $\mu_{n_k}(Z) \rightarrow t$. By definition of the $\KRu$ norm \eqref{eq:KR-norm}, this implies that $\mu_{n_k}\rightarrow\mu^0+t\delta_e$ in $\|\cdot\|_{\KRu}$. It remains to show that $\mu^*=\mu^0+t\delta_e\in B$. This follows by the fact that convergence in $\|\cdot\|_{\KRu}$ implies weak$^*$ convergence, see Lemma~\ref{lem:KR_implies_w*}, together with the lower semicontinuity of the total variation norm with respect to the weak$^*$ convergence. Indeed, we have that 
\begin{align*}
\|\mu^*\|_{\TV}&=\sup_{\overset{f\in C(Z)}{\|f\|_{\infty}\leq 1}} \int_X f (z) {\rm d} \mu^*(z)=\sup_{\overset{f\in C(Z)}{\|f\|_{\infty}\leq 1}} \liminf_{k\to\infty}\int_Z f (z) {\rm d} \mu_{n_k}(z)\\
&\leq \liminf_{k\to\infty} \sup_{\overset{f\in C(Z)}{\|f\|_{\infty}\leq 1}} \int_Z f (z) {\rm d} \mu_{n_k}(z)
=\liminf_{k\to\infty}\|\mu_{n_k}\|_{\TV}\leq1,
\end{align*}
which implies that $\mu^*\in B$ and concludes the proof.
\end{proof}

However, if $Z$ is only locally compact, the situation is subtler and  moment conditions on the measures are necessary to ensure compactness in the KR topology, see Theorem~\ref{thm:compact-embedding-locally-compact}. We need the following definitions and facts.

\begin{definition}[Uniformly tight families of measures]
    A family of measures $M \subset \M(Z)$ is called uniformly tight if for any $\eps > 0$ there exists a compact set $K_\eps \subset Z$ such that $\int_{Z \setminus K_\eps} \d\abs{\mu}(z) < \eps$, for all $\mu \in M$.
\end{definition}

\begin{definition}[Narrow convergence of measures]\label{def:weakconv}
    A sequence $\{\mu_n\}_{n\in\N} \subset \M(Z)$ is said to converge narrowly to $\mu^* \in \M(Z)$, and we write $\mu_n \weakto \mu^*$, if for any  function $\varphi \in C_b(Z)$ one has
    \begin{equation*}
        \int_Z \varphi(z) \, \d\mu_n(z) \to \int_Z \varphi(z) \, \d\mu^*(z),
    \end{equation*}
    where $C_b(Z)$ denotes the space of bounded and continuous real-valued functions on $Z$.
\end{definition}

\begin{theorem}[{Prokhorov's theorem, \cite[Thm. 8.6.2]{bogachev-measure-theory-vol2}}]\label{thm:prok}
    Let $Z$ be a complete separable metric space and $M \subset \M(Z)$ a family of Radon measures on $Z$. Then the following are equivalent
    \begin{itemize}
        \item[(i)] every sequence $\{\mu_n\}_{n\in\N} \subset M$ has a narrowly convergent subsequence;
        \item[(ii)] the family $M$ is uniformly tight and uniformly bounded in the variation norm.
    \end{itemize}
\end{theorem}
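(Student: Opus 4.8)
The statement is the classical Prokhorov theorem, and I would establish the two implications separately, with a compactification argument doing the heavy lifting for the nontrivial direction. For (i)$\Rightarrow$(ii), uniform boundedness in variation is essentially automatic: if $\sup_{\mu\in M}\|\mu\|_\TV=\infty$, pick $\mu_n\in M$ with $\|\mu_n\|_\TV\to\infty$; by (i) a subsequence $\{\mu_{n_j}\}$ converges narrowly, so $\sup_j|\int_Z\varphi\d\mu_{n_j}|<\infty$ for every $\varphi\in C_b(Z)$, i.e. $\{\mu_{n_j}\}$ is a pointwise bounded family of continuous functionals on the Banach space $C_b(Z)$; since $\|\mu\|_{C_b(Z)^*}=|\mu|(Z)=\|\mu\|_\TV$ for bounded Radon measures on a metric space, the uniform boundedness principle forces $\sup_j\|\mu_{n_j}\|_\TV<\infty$, a contradiction. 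For tightness I would argue by contraposition: if $M$ is not uniformly tight there is $\eps_0>0$ such that every compact $K$ is ``escaped'' by some $\mu\in M$ with $|\mu|(Z\setminus K)\ge\eps_0$. Using that each finite measure $|\mu|$ is inner regular on the Polish space $Z$, I would build inductively a sequence $\mu_n\in M$ and an increasing sequence of compacta $K_n$ so that $\mu_n$ carries at least a fixed amount $\eps_1>0$ of positive (or negative) mass on a compact set contained in $K_n\setminus K_{n-1}$, after thinning so these ``shells'' are mutually well separated; by normality one then finds $\varphi\in C_b(Z)$ with $0\le\varphi\le1$, equal to $1$ on the even-indexed shells and $0$ on the odd ones, for which $\int_Z\varphi\d\mu_n$ oscillates by at least $\eps_1$ along the sequence, so no subsequence converges narrowly, contradicting (i).

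For (ii)$\Rightarrow$(i) the plan is to reduce to the compact case. Since $Z$ is separable metric, embed it homeomorphically into a compact metric space $\hat Z$ (e.g. the closure of an embedding of $Z$ into the Hilbert cube); then $Z$ is a Borel (indeed $G_\delta$) subset of $\hat Z$, so each $\mu_n$ pushes forward to a bounded Radon measure on $\hat Z$, still of variation $\le C:=\sup_{\mu\in M}\|\mu\|_\TV$ and concentrated on $Z$. As $\M(\hat Z)\cong C(\hat Z)^*$ with $C(\hat Z)$ separable, Banach--Alaoglu together with metrizability of bounded sets in the weak$^*$ topology yields a subsequence with $\mu_{n_j}\weakastto\hat\mu$ in $\M(\hat Z)$.

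It then remains to see that $\hat\mu$ is the extension of a bounded Radon measure $\mu^*$ on $Z$ and that $\mu_{n_j}\weakto\mu^*$ narrowly on $Z$. Let $\{K_m\}$ be compacta with $\delta_m:=\sup_n|\mu_n|(Z\setminus K_m)\to0$, furnished by tightness. Since $\hat Z\setminus K_m$ is open in $\hat Z$, lower semicontinuity of total variation on open sets under weak$^*$ convergence gives $|\hat\mu|(\hat Z\setminus K_m)\le\liminf_j|\mu_{n_j}|(\hat Z\setminus K_m)=\liminf_j|\mu_{n_j}|(Z\setminus K_m)\le\delta_m$, so $\hat\mu$ is concentrated on $\bigcup_m K_m\subset Z$ and hence restricts to a finite Radon measure $\mu^*$ on $Z$. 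Finally, given $\varphi\in C_b(Z)$ and $\eps>0$, choose $K_m$ with $\delta_m<\eps$, extend $\varphi|_{K_m}$ by Tietze to $\hat\varphi\in C(\hat Z)$ with $\|\hat\varphi\|_\infty\le\|\varphi\|_\infty$, and write $\int_Z\varphi\d\mu_{n_j}-\int_Z\varphi\d\mu^*$ as $\big(\int_{\hat Z}\hat\varphi\d\mu_{n_j}-\int_{\hat Z}\hat\varphi\d\hat\mu\big)$ plus error terms supported on $\hat Z\setminus K_m$ or $Z\setminus K_m$, each bounded by $\|\varphi\|_\infty\delta_m$ (using $\varphi=\hat\varphi$ on $K_m$ and $|\mu^*|(Z\setminus K_m)\le\delta_m$); letting $j\to\infty$ and then $m\to\infty$ gives the narrow convergence $\mu_{n_j}\weakto\mu^*$.

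The main obstacle is the direction (ii)$\Rightarrow$(i): one must produce the limit not merely as a functional on $C(\hat Z)$ but as a genuine Radon measure living back on the possibly non-locally-compact space $Z$, and then bridge the gap between weak$^*$ convergence tested on $C(\hat Z)$ and narrow convergence tested on the much larger space $C_b(Z)$ --- and tightness is exactly what makes both steps work, by localizing the mass of $\hat\mu$ and by allowing an arbitrary bounded continuous test function to be replaced, up to a tail error, by a continuous function on $\hat Z$. A compactification-free alternative --- diagonalize over weak$^*$ limits $\nu_m\in\M(K_m)$ of the restrictions $\mu_{n_j}|_{K_m}$ and glue them --- is possible, but the consistency check for the family $\{\nu_m\}$ is itself the delicate point that the compactification circumvents. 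In the direction (i)$\Rightarrow$(ii) the only genuine care lies in the inductive escaping-mass construction and in arranging enough separation of the compacta to build the oscillating test function.
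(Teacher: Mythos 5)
The paper offers no proof of this statement at all: it is quoted directly from Bogachev with a citation, and only the implication (ii)$\Rightarrow$(i) is ever invoked (namely in the proof of \cref{thm:compact-embedding-locally-compact}), so there is no in-paper argument to compare yours against. Your treatment of (ii)$\Rightarrow$(i) is correct and is the standard compactification proof: embedding $Z$ into the Hilbert cube, extracting a weak$^*$ limit $\hat\mu$ in $\M(\hat Z)\cong C(\hat Z)^*$ via Banach--Alaoglu and separability, using the tightness compacta together with $|\hat\mu|(U)\le\liminf_j|\mu_{n_j}|(U)$ for $U$ open to concentrate $\hat\mu$ on $\bigcup_m K_m\subset Z$, and upgrading weak$^*$ convergence on $C(\hat Z)$ to narrow convergence on $C_b(Z)$ by Tietze extension with a tail error of order $\delta_m$ (the error terms are really $2\norm{\varphi}_\infty\delta_m$, which is immaterial). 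This is, incidentally, the same cut-off-and-extend mechanism the paper itself deploys at the level of Lipschitz test functions inside the proof of \cref{thm:compact-embedding-locally-compact}.

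The genuine gap is in (i)$\Rightarrow$(ii), in the tightness half. The norm bound via the uniform boundedness principle is fine, granted the (true, but worth recording) fact that $\M(Z)\hookrightarrow C_b(Z)^*$ is isometric on a metric space, which follows from the Hahn decomposition, closed-regularity of finite Borel measures, and Urysohn functions. But the oscillating test function does not do what you claim: since $\varphi$ equals $1$ on \emph{all} even-indexed shells, $\int_Z\varphi\,\d\mu_n$ collects not only the mass $\geq\eps_1$ that $\mu_n$ carries on its own shell but also the entirely uncontrolled mass that $\mu_n$ places on every other even shell and on the interpolation regions; nothing in the construction prevents these cross terms from making $\int_Z\varphi\,\d\mu_n$ converge along a subsequence. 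Separating the shells metrically does not help, because the obstruction is measure-theoretic, not topological. Repairing this requires a genuinely more careful gliding-hump selection --- interleaving the choice of measures, shells, and the $0$--$1$ pattern (the latter only after a candidate convergent subsequence is fixed), and exploiting $\sum_m|\mu_n|(S_m)\le C$ to control how each selected measure charges the other selected shells --- and for signed measures this is precisely the hard half of the theorem; one also needs to make explicit the step, which you use implicitly, that escaping variation $|\mu|(Z\setminus K)\ge\eps_0$ can be upgraded to single-signed mass on a compact set via Hahn decomposition and inner regularity. Since the paper only ever uses (ii)$\Rightarrow$(i), this gap is harmless for the application, but as a proof of the stated equivalence the argument is incomplete.
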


\begin{lemma}\label{lem:llss}
        Let $\{\mu_n\}_{n\in\N}$ be a sequence in $\mathcal{M}(Z)$ converging narrowly to $\mu\in \mathcal{M}(Z)$. Then
    \[\liminf_n \int_Z g(x) \, {\rm d} |\mu_n|(x) \geq \int_Z g(x) \, {\rm d} |\mu|(x) \quad \text{for all } g \in C_b(Z),\, g>0.\]
\end{lemma}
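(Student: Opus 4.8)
The plan is to express the functional $\nu \mapsto \int_Z g\,\d|\nu|$ as a supremum of functionals that are continuous under narrow convergence; lower semicontinuity of a supremum of continuous functionals is then automatic. The first step is therefore to establish, for any $\nu \in \M(Z)$ and any $g \in C_b(Z)$ with $g>0$, the duality formula
\[
\int_Z g\,\d|\nu| \;=\; \sup\left\{ \int_Z \varphi\,\d\nu \;:\; \varphi \in C_b(Z),\ |\varphi| \leq g \text{ on } Z \right\}.
\]
The inequality ``$\geq$'' is immediate, since $\int_Z \varphi\,\d\nu \leq \int_Z |\varphi|\,\d|\nu| \leq \int_Z g\,\d|\nu|$ for every admissible $\varphi$.

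For the reverse inequality I would use the polar decomposition $\d\nu = h\,\d|\nu|$ with $h$ Borel and $|h|=1$ holding $|\nu|$-a.e., so that $gh$ is a bounded Borel function with $|gh|\leq g$ and $\int_Z gh\,\d\nu = \int_Z g h^2\,\d|\nu| = \int_Z g\,\d|\nu|$. Since $|\nu|$ is a finite Radon measure on the metric space $Z$, bounded continuous functions are dense in $L^1(|\nu|)$, so one can choose $\psi_k \in C_b(Z)$ with $\psi_k \to gh$ in $L^1(|\nu|)$. Truncating by $\tilde\psi_k := \max\{-g,\,\min\{\psi_k,g\}\}$ keeps $\tilde\psi_k$ in $C_b(Z)$ (as $g$ is continuous and bounded) and enforces $|\tilde\psi_k|\leq g$; moreover $|\tilde\psi_k - gh| \leq |\psi_k - gh|$ pointwise, because $|gh|\leq g$ and truncation at $\pm g$ can only move $\psi_k$ closer to a function already bounded by $g$ in absolute value. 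Hence $\tilde\psi_k \to gh$ in $L^1(|\nu|)$ as well, and consequently $\int_Z \tilde\psi_k\,\d\nu \to \int_Z gh\,\d\nu = \int_Z g\,\d|\nu|$, which proves ``$\leq$'' and completes the duality formula.

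Granting the formula, the conclusion follows at once: for each fixed $\varphi \in C_b(Z)$ with $|\varphi|\leq g$, narrow convergence gives $\int_Z \varphi\,\d\mu = \lim_n \int_Z \varphi\,\d\mu_n \leq \liminf_n \int_Z g\,\d|\mu_n|$, using the bound $\int_Z \varphi\,\d\mu_n \leq \int_Z g\,\d|\mu_n|$ for each $n$; taking the supremum over all such $\varphi$ and invoking the duality formula for $\mu$ yields $\int_Z g\,\d|\mu| \leq \liminf_n \int_Z g\,\d|\mu_n|$. The main obstacle — essentially the only non-routine point — is the density-plus-truncation argument establishing the duality formula, where one must check that replacing the approximating functions $\psi_k$ by their truncations at $\pm g$ does not destroy $L^1(|\nu|)$-convergence; this is exactly where the positivity and boundedness of $g$ and the bound $|gh|\leq g$ enter. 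One could alternatively deduce the statement from a Reshetnyak-type lower semicontinuity theorem, but the direct argument above is shorter and self-contained.
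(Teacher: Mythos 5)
Your proof is correct and follows essentially the same strategy as the paper: write $\int_Z g\,\d|\nu|$ as a supremum of integrals of bounded continuous test functions against $\nu$, and then use that a supremum of narrowly continuous functionals is narrowly lower semicontinuous. The only difference is cosmetic — the paper uses the test class $\{fg : f\in C_0(Z),\ \|f\|_\infty\le 1\}$ together with the identity $\|g\nu\|_{\TV}=\int_Z g\,\d|\nu|$, whereas you use the class $\{\varphi\in C_b(Z): |\varphi|\le g\}$ and verify the corresponding duality formula from scratch via polar decomposition, density of $C_b(Z)$ in $L^1(|\nu|)$, and the $1$-Lipschitz truncation onto $[-g,g]$, all of which is sound.
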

\begin{proof} First, we notice that for all $g \in C_b(Z),\, g>0$, and $\nu\in \mathcal{M}(Z)$ we have $\|g\nu\|_{{\rm TV}} = \int_Z g(x) {\rm d} |\nu|$. Thus
    \begin{equation*}
        \begin{aligned}
            &\liminf_n \int_Z g(x) {\rm d} |\mu_n|  = \liminf_n \|g\mu_n\|_{{\rm TV}} = \liminf_n \sup_{\overset{f\in C_0(Z)}{\|f\|_{\infty}\leq 1}} \int_Z f (x) g(x) {\rm d} \mu_n\\
            \geq& \sup_{\overset{f\in C_0(Z)}{\|f\|_{\infty}\leq 1}} \liminf_n \int_Z f (x) g(x) {\rm d} \mu_n\geq \sup_{\overset{f\in C_0(Z)}{\|f\|_{\infty}\leq 1}} \int_Z f (x) g(x) {\rm d} \mu 
            = \|g\mu\|_{{\rm TV}} = \int_Z g(x) {\rm d} |\mu|.
        \end{aligned}
    \end{equation*}
    Notice that we have used the fact that $f g \in C_b(Z)$ when $f \in C_0(Z)$.
\end{proof}

In view of this result, we are now able to prove the lower semicontinuity of the $p$-moments with respect to the narrow convergence. 

\begin{lemma}\label{lem:lsc_qmoments}
    Let $\{\mu_n\}_{n\in\N}$ be a sequence in $\mathcal{M}_p(Z)$ converging narrowly to $\mu\in \mathcal{M}(Z)$. Then
    \[\liminf_n \int_Z (1+d(x,e)^p) {\rm d} |\mu_n|(x) \geq \int_Z (1+d(x,e)^p) {\rm d} |\mu|(x),\]
    and thus $\mu \in \mathcal{M}_p(Z)$.
\end{lemma}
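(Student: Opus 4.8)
The plan is to reduce the statement to Lemma~\ref{lem:llss} by a truncation argument. The only real difficulty is that the weight $g(x) := 1 + d(x,e)^p$ is continuous and strictly positive but, on the merely locally compact space $Z$, typically unbounded, so Lemma~\ref{lem:llss} cannot be applied to $g$ itself. I would therefore fix $R > 0$ and work with the truncated weight $g_R(x) := \min\{1 + d(x,e)^p,\, R\}$; since $x \mapsto d(x,e)$ is continuous, $g_R \in C_b(Z)$ and $g_R > 0$, so Lemma~\ref{lem:llss} applies to $g_R$ and yields
\[
\liminf_n \int_Z g_R(x)\, \d|\mu_n|(x) \;\geq\; \int_Z g_R(x)\, \d|\mu|(x).
\]
Combining this with the trivial pointwise bound $g_R \leq g$, which gives $\int_Z (1 + d(x,e)^p)\, \d|\mu_n| \geq \int_Z g_R\, \d|\mu_n|$ for every $n$, I get $\liminf_n \int_Z (1 + d(x,e)^p)\, \d|\mu_n| \geq \int_Z g_R\, \d|\mu|$ for each fixed $R > 0$.

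Next I would let $R \to \infty$ on the right-hand side. Since $g_R \uparrow g$ pointwise as $R \to \infty$ and $|\mu|$ is a finite positive measure, the monotone convergence theorem gives $\int_Z g_R\, \d|\mu| \to \int_Z (1 + d(x,e)^p)\, \d|\mu|$, with the value $+\infty$ allowed. As the left-hand side $\liminf_n \int_Z (1 + d(x,e)^p)\, \d|\mu_n|$ does not depend on $R$, passing to the limit establishes the claimed inequality. Finally, in the regime where this $\liminf$ is finite --- which is exactly the situation in which the lemma is invoked, the $p$-moments of the $\mu_n$ being uniformly bounded there --- the right-hand side is finite, and since $|\mu|(Z) < \infty$ (because $\mu \in \M(Z)$) this forces $\int_Z d(x,e)^p\, \d|\mu| < \infty$, i.e.\ $\mu \in \M_p(Z)$.

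I expect the truncation to be the only point requiring any care: it is essential to use the monotone convergence theorem rather than dominated convergence when passing to the limit in $R$, since a priori $\int_Z g\, \d|\mu|$ could be infinite before one knows that $\liminf_n \int_Z g\, \d|\mu_n|$ is finite. Everything else is a routine combination of Lemma~\ref{lem:llss} with the elementary inequality $g_R \leq g$.
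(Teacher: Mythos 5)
Your proposal is correct and follows essentially the same route as the paper: both truncate the unbounded weight $1+d(\cdot,e)^p$ at level $R$ (resp.\ $k$) to obtain a positive function in $C_b(Z)$ to which Lemma~\ref{lem:llss} applies, and then pass to the limit in the truncation parameter via monotone convergence. Your added remark that the conclusion $\mu\in\M_p(Z)$ is only forced when the left-hand $\liminf$ is finite (as it is wherever the lemma is invoked) is a fair and accurate observation.
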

\begin{proof}
    Let $f: x \mapsto  1+d(x,e)^p$ and define the sequence \[ f_k: x \mapsto \min\{f(x),k\} \quad \text{for all } k\in \N.\]
    We have $\lim_k f_k(x) = \sup_k f_k(x) = f(x)$ for all $x\in Z$.
        Using this, the monotone convergence theorem and Lemma~\ref{lem:llss}, we get
        \[\begin{aligned}
            & \liminf_n \int f(x) {\rm d} |\mu_n|(x)   = \liminf_n \sup_k \int f_k(x) {\rm d} |\mu_n|(x) \\ 
            \geq &  \sup_k \liminf_n \int f_k(x) {\rm d} |\mu_n|(x)
            \geq  \sup_k \int f_k(x) {\rm d} |\mu|(x) = \lim_k \int f_k(x) {\rm d} |\mu|(x).
        \end{aligned}\]
        Finally, Fatou's Lemma~implies
        \[\lim_k \int f_k(x) {\rm d} |\mu|(x) \geq \int \liminf_k f_k(x) {\rm d} |\mu|(x) = \int  f(x) {\rm d} |\mu|(x).\]
\end{proof}

With these results, we are now able to prove a general result of compactness in the KRu norm under total variation and moment bounds.
\begin{theorem}\label{thm:compact-embedding-locally-compact}\label{thm:compact}
    Suppose that $p>1$. Then the following set is compact in $\KRu(Z)$
\[
B^p = \left\{\mu\in\mathcal{M}(Z):\int_Z(1+d(z,e)^p){\rm d}|\mu|(z)\leq1\right\}.
\]
\end{theorem}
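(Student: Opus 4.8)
The plan is to extract a narrowly convergent subsequence via Prokhorov's theorem (\cref{thm:prok}), identify its limit as an element of $B^p$ using lower semicontinuity of the moments (\cref{lem:lsc_qmoments}), and then upgrade narrow convergence to norm convergence in $\KRu(Z)$ by a truncation argument. First I would record that under the standing hypotheses $Z$ is proper, i.e. closed balls $\overline{\ball{R}(e)}$ are compact (Hopf--Rinow for complete, locally compact, geodesic spaces). Given a sequence $\{\mu_n\}\subset B^p$, the bound $\int_Z(1+d(z,e)^p)\,\d\abs{\mu_n}\le1$ immediately gives $\norm{\mu_n}_\TV\le1$ and $\abs{\mu_n}\big(Z\setminus\overline{\ball{R}(e)}\big)\le(1+R^p)^{-1}$ for every $R>0$, so the family is uniformly bounded in variation and uniformly tight. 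By \cref{thm:prok} there is a subsequence, not relabelled, with $\mu_n\weakto\mu^*$ for some $\mu^*\in\M(Z)$, and \cref{lem:lsc_qmoments} yields $\mu^*\in\M_p(Z)$ with $\int_Z(1+d(z,e)^p)\,\d\abs{\mu^*}\le1$, i.e. $\mu^*\in B^p$. It remains to prove $\norm{\mu_n-\mu^*}_\KRu\to0$.

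Write $\nu_n:=\mu_n-\mu^*$, so $\nu_n\weakto0$, $\norm{\nu_n}_\TV\le2$ and $\int_Z(1+d(z,e)^p)\,\d\abs{\nu_n}\le2$. Testing narrow convergence against the constant function gives $\nu_n(Z)\to0$, so by the definition of $\norm{\cdot}_\KRu$ it suffices to show $\sup\big\{\int_Z f\,\d\nu_n : f(e)=0,\ L(f)\le1\big\}\to0$. Fix $\eps>0$. For such $f$ one has $\abs{f(z)}\le d(z,e)$, and, since $p>1$, we may choose $R\ge1$ with $2R^{1-p}<\eps/2$; then for all $n$ and all admissible $f$,
\[
\Big|\int_{\{d(\cdot,e)>R\}} f\,\d\nu_n\Big|\le \int_{\{d(\cdot,e)>R\}} d(z,e)\,\d\abs{\nu_n}\le R^{1-p}\int_Z d(z,e)^p\,\d\abs{\nu_n}\le 2R^{1-p}<\frac{\eps}{2}.
\]
Now introduce a cutoff $\chi_R\in\Lip(Z)\cap C_b(Z)$ with $\chi_R\equiv1$ on $\overline{\ball{R}(e)}$, $\chi_R\equiv0$ off $\ball{2R}(e)$, $0\le\chi_R\le1$ and $L(\chi_R)\le1/R$, and split $f=f\chi_R+f(1-\chi_R)$. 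Since $\{1-\chi_R\ne0\}\subseteq\{d(\cdot,e)>R\}$, the displayed estimate bounds the $f(1-\chi_R)$ contribution, so the whole matter reduces to showing $\sup_f\big|\int_Z f\chi_R\,\d\nu_n\big|\to0$.

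For this compact core, note that on $K:=\overline{\ball{2R}(e)}$ one has $\abs{f}\le 2R$, so $f\chi_R=\tilde f\chi_R$ with $\tilde f$ the truncation of $f$ at level $2R$; hence $\norm{f\chi_R}_\infty\le2R$, $L(f\chi_R)\le\norm{\tilde f}_\infty L(\chi_R)+\norm{\chi_R}_\infty L(\tilde f)\le3$, and $f\chi_R$ vanishes outside $\ball{2R}(e)$. Therefore $\calG:=\{f\chi_R: f(e)=0,\ L(f)\le1\}$ is, viewed on the compact set $K$, equibounded and equi-Lipschitz, hence relatively compact in $(C(K),\norm{\cdot}_\infty)$ by Arzelà--Ascoli, and every element of $\calG$ (and every uniform limit of such) extends by $0$ to an element of $C_b(Z)$. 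A standard equicontinuity argument then finishes: if $\sup_{g\in\calG}\big|\int_Z g\,\d\nu_n\big|\not\to0$, pick $\eta>0$, a subsequence, and $g_j\in\calG$ with $\big|\int_Z g_j\,\d\nu_{n_j}\big|\ge\eta$; passing to a further subsequence $g_j\to g$ uniformly with $g\in C_b(Z)$, whence $\big|\int_Z g\,\d\nu_{n_j}\big|\ge\eta-\norm{g_j-g}_\infty\norm{\nu_{n_j}}_\TV\ge\eta/2$ for large $j$, contradicting $\int_Z g\,\d\nu_{n_j}\to0$ (narrow convergence $\nu_n\weakto0$). Thus $\sup_f\big|\int_Z f\chi_R\,\d\nu_n\big|<\eps/2$ for $n$ large, which together with the tail estimate and $\nu_n(Z)\to0$ gives $\norm{\nu_n}_\KRu\to0$.

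The only genuinely non-routine point is the tail estimate in the second paragraph: this is exactly where the moment order $p>1$ is used, via $d(z,e)\le R^{1-p}d(z,e)^p$ for $d(z,e)\ge R$, and it is the failure of precisely this estimate at $p=1$ that underlies the non-compactness exhibited by the dipole examples (cf. \cref{ex:dipolesnotconv}). Everything else is a localization-plus-Arzelà--Ascoli adaptation of the compact case \cref{thm:compact-embedding-compact}.
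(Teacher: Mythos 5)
Your proof is correct, and the decisive step is carried out by a genuinely different mechanism than in the paper. Both arguments share the same skeleton: Chebyshev gives uniform tightness, Prokhorov extracts a narrow limit, \cref{lem:lsc_qmoments} places the limit in $B^p$, and the hypothesis $p>1$ enters through the same tail estimate (your inequality $d(z,e)\le R^{1-p}d(z,e)^p$ for $d(z,e)\ge R$ is exactly the paper's Markov inequality applied to the measure with density $d(\cdot,e)$). The divergence is in how narrow convergence is upgraded to convergence in the $\KRu$ norm. The paper follows \cite[Lem.~VIII.4.4]{kantorovich-akilov}: after a Tietze-extension step, it constructs an explicit finite $\eps$-net of balanced measures (partitioning a large ball, collapsing mass to representative points and to one far point, then quantizing the weights), thereby proving that the relevant set is totally bounded in $\KR$ and hence compact once closedness is known. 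You instead work directly with $\nu_n=\mu_n-\mu^*$ and prove $\sup_f\abs{\int f\,\d\nu_n}\to 0$ over the unit ball of $\Lip_0$ by splitting each test function into a tail part (killed by the moment bound) and a compact core $f\chi_R$, and then exploiting that this core family is totally bounded in $C(K)$ by Arzel\`a--Ascoli, so that narrow convergence against each fixed $C_b$ function upgrades to uniform convergence over the family. In effect you transfer the total-boundedness argument from the space of measures to the space of test functions; this avoids the quantization bookkeeping and the balanced/unbalanced decomposition (you only need $\nu_n(Z)\to 0$), at the cost of being a purely sequential argument, whereas the paper's $\eps$-net yields the slightly stronger structural fact that the whole moment ball is totally bounded in $\KR$. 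Since $\KRu(Z)$ is metric the two conclusions coincide, and all the small technical points in your write-up (the identity $f\chi_R=\tilde f\chi_R$, the bound $L(f\chi_R)\le 3$, the zero-extension of uniform limits) check out.
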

\begin{proof}
    Consider a sequence $\{\mu_n\}_{n\in\N} \subset \M(Z)$ such that \begin{equation}\label{eq:bddmasspmoment}
        \int_Z (1+d(z,e)^p) \, \d \abs{\mu_n}(z) \leq 1 
    \end{equation}
    uniformly in $n\in\N$. We need to prove that there exist a subsequence $\{\mu_{n_k}\}_{k\in\N}$ and a measure $\mu_\infty \in \M(Z)$ such that
    $\norm{\mu_{n_k} - \mu_\infty}_\KRu \to 0$. 
    Note that, as in the proof of \cref{thm:dualityKR}, we can decompose the elements of the sequence $\{\mu_n\}_{n\in\N}$ as $
    \mu_n = \mu^0_n + \mu_n(Z) \delta_e$, where $\mu_n^0$ is balanced. In particular, we have that 
    \begin{align}
         \int_Z (1+d(z,e)^p) \, \d \abs{\mu^0_n}(z) \leq 2.
    \end{align}
    We start proving that $\{\mu_n^0\}_{n\in\N}$ is uniformly tight. 
Let $\epsilon>0$. Applying the Chebyshev inequality to the function $z\mapsto d(z,e)$ and the measure $|\mu_n^0|$, we obtain that for all $n\in\N$ and for all $r\in(0,+\infty)$
\begin{equation}\label{eq:ptight}\begin{aligned}
    |\mu_n^0|(Z\setminus K_{e,r})&=|\mu_n^0|(\{z\in Z:d(z,e)>r\}) \leq |\mu_n^0|(\{z\in Z:d(z,e)\geq r\})\\
    &\leq \frac{1}{r^p}\int_Zd(z,e)^p{\rm d}|\mu_n^0|(z) \leq\frac{2}{r^p},
\end{aligned}\end{equation}
where we have denoted by $K_{e,r}$ the closed ball centered at $e$ of radius $r$.
Therefore, there exists $r_{\epsilon}>0$ such that $|\mu_n^0|(Z\setminus K_{e,r_\epsilon})<\epsilon$ for all $n\in\N$. Since, by the Hopf-Rinow theorem \cite[Thm.~2.5.28]{burago-burago-ivanov}, the ball $K_{e,r_\epsilon}$ is compact, we can conclude that $\{\mu_n^0\}_{n\in\N}$ is uniformly tight. Since the sequence $\{\mu_n^0\}_{n\in\N}$ is also uniformly bounded in the variation norm by hypothesis, we can apply Prokhorov theorem (\cref{thm:prok}), to conclude that there exists a subsequence $\{\mu_{n_k}\}_{k\in\N}$ narrowly converging to some $\mu_\infty^0\in \mathcal{M}(Z)$, i.e.
\begin{equation}\label{eq:weakconvmu}\int_Z \varphi(z) \d \mu^0_{n_k}(z) \xrightarrow[k \to \infty]{} \int_Z \varphi(z) \d \mu^0_\infty(z) \quad\text{for all } \varphi \in C_b(Z).\end{equation}
Furthermore, by Lemma~\ref{lem:lsc_qmoments}, the limit measure $\mu_\infty^0\in \mathcal{M}(Z)$ satisfies 
   \begin{align*}
         \int_Z (1+d(z,e)^p) \, \d \abs{\mu^0_\infty} \leq 2,
    \end{align*}
    and thus $\mu^0_\infty \in \mathcal{M}_p(Z)$.
Now, let us define $\nu_n \in \mathcal{M}(Z)$ by
\[\nu_n(A) = \int_A d(z,e) \d \mu^0_n(z).\]
Using the Markov (i.e.~Chebyshev with exponent $1$) inequality on the function $z \mapsto d(z,e)^{p-1}$ and the measures $|\nu_n|$, we obtain for all $r \in (0,+\infty)$ and that
\begin{align}\label{eq:mark}
    |\nu_n|(Z \setminus K_{e,r}) \leq \frac{1}{r^{p-1}} \int_Z d(z,e)^{p-1} \d |\nu_n|  = \frac{1}{r^{p-1}} \int_Z d(z,e)^p \d |\mu^0_n| \leq \frac{2}{r^{p-1}},
\end{align}
where we have denoted by $K_{e,r}$ the closed ball centered at $e$ of radius $r$, which is compact by the Hopf-Rinow theorem \cite[Thm.~2.5.28]{burago-burago-ivanov}.

Consider now a function $f \in \Lip_0(Z)$ with Lipschitz constant at most $1$, and let $r \in (0, +\infty)$ be arbitrary. Then we have $|f(z)| \leq r$ for $z \in K_{e,r}$, and we can consider a Tietze extension $\mathcal{E}\big(\restr{f}{K_{e,r}}\big) \in C_b(Z)$ of the restriction $\restr{f}{K_{e,r}}$ of $f$ to the closed set $K_{e,r}$, which inherits the bound \cite[Thm.~VII.5.1, IX.5.2]{dugundji}, that is
\[\left|\mathcal{E}\big(\restr{f}{K_{e,r}}\big)(z)\right| \leq r \quad \text{for all }z \in Z.\]
Now, putting the above facts together, we have
\begin{equation}\label{eq:cutoffintbound}
\begin{aligned}&\left|\int_Z f(z) \d \mu^0_n(z) - \int_Z \mathcal{E}\big(\restr{f}{K_{e,r}}\big)(z) \d \mu^0_n(z)\right| \\ &\qquad\leq \int_{Z \setminus K_{e,r}} |f(z)| \d |\mu^0_n|(z) + \int_{Z \setminus K_{e,r}} \left|\mathcal{E}\big(\restr{f}{K_{e,r}}\big)(z)\right| \d |\mu^0_n|(z) \\ &\qquad\leq \int_{Z \setminus K_{e,r}} d(z,e) \d |\mu^0_n|(z) + \int_{Z \setminus K_{e,r}} r \d |\mu^0_n|(z) \\ &\qquad= |\nu_n|(Z \setminus K_{e,r}) + r |\mu^0_n|(Z \setminus K_{e,r}) \leq \frac{4}{r^{p-1}}.
\end{aligned}
\end{equation}
Similarly, we also obtain
\[\left|\int_Z f(z) \d \mu^0_\infty(z) - \int_Z \mathcal{E}\big(\restr{f}{K_{e,r}}\big)(z) \d \mu^0_\infty(z)\right|\leq \frac{4}{r^{p-1}}.\]
Let $\epsilon>0$, we take $r_\epsilon$ such that $\frac{4}{r_\epsilon^{p-1}}\leq \frac{\epsilon}{3}$. By \eqref{eq:weakconvmu} for $k$ big enough we get
\begin{equation}\label{eq:tietzeconv}\left|\int_Z \mathcal{E}\big(\restr{f}{K_{e,r_\epsilon}}\big)(z) \d \mu^0_{n_k}(z) - \int_Z \mathcal{E}\big(\restr{f}{K_{e,r_\epsilon}}\big)(z) \d \mu^0_\infty(z)\right| \leq \frac{\epsilon}{3}.\end{equation}
By the fact that $\epsilon$ was arbitrary and using triangle inequality, we can conclude that
\[\int_Z f(z) \d \mu^0_{n_k}(z) \xrightarrow[k \to \infty]{} \int_Z f(z) \d \mu^0_\infty(z).\]

Now, we want to improve to convergence in the KRu norm, that is, we would like to obtain convergence with respect of the strong dual topology of $\KR(Z)$ instead of  the weak one. We build upon the proof of \cite[Lem.~VIII.4.4]{kantorovich-akilov}, which treats the case where $Z$ is compact. To this end, for each $\eps > 0$ we will build a finite $\eps$-net for the set of balanced measures in $\{\mu\in\mathcal{M}^0(Z):\int_Z(1+d(z,e)^p){\rm d}|\mu|(z)\leq1\}$, that is, a finite set of measures $\nu_i$ such that for any such $\mu$ there is $i$ for which $\|\mu - \nu_i\|_{\KR} < \eps$. 
Note that this makes such set precompact in the $\KR$ topology, and since we have already seen that it is weakly closed, hence closed, it is compact.

For $r>0$ to be chosen later, let us again consider the closed ball $K_{e,r}$. Since it is compact, we can express it as
\[ K_{e,r} = \bigcup_{j=1}^{m(r,\eps)} E_j \quad \text{for }E_j\text{ Borel sets with }\diam E_j <\frac{\eps}{3}.\]
Moreover, we fix points $z_j \in E_j$ and $z_\infty$ with $r \leq d(e,z_\infty) \leq 2r$ but otherwise arbitrary. Now, for any $\mu \in \mathcal{B}^p$ with $\mu(Z) = 0$ we construct a new measure
\[\overline{\mu} = \sum_{j=1}^{m(r,\eps)} \mu(E_j) \big(\delta_{z_j} - \delta_{e}\big) + \mu\big(Z\setminus K_{e, r}\big)\big(\delta_{z_\infty} - \delta_{e}\big).\]
To estimate the norm of $\mu - \overline{\mu}$, noticing that $\mu(Z)=0$ implies that all the terms containing $\delta_{e}$ above actually cancel, we can write
\begin{align*}
\mu - \overline{\mu} &= \sum_{j=1}^{m(r,\eps)} \rho_j + \rho_\infty, \text{ for } \rho_j, \rho_\infty \text{ defined by}\\
\rho_j(A) &= \mu(A \cap E_j) - \mu(E_j) \delta_{z_j}(A)\\
\rho_\infty(A) &= \mu\big(A \setminus K_{e,r}\big) - \mu\big(Z \setminus K_{e,r}\big) \delta_{z_\infty}(A),
\end{align*}
for which, using \eqref{eq:otformula}, we can estimate $\|\rho_j\|_{\KR} < |\mu(E_j)|\eps/3$ and
\[\|\rho_\infty\|_{\KR} \leq |\mu|\big(Z \setminus K_{e,r}\big) d(e,z_\infty) + \int_{Z \setminus K_{e,r}} d(z,e) \d|\mu|(z) \leq \frac{1}{r^p} \cdot 2r + \frac{1}{r^{p-1}} = \frac{3}{r^{p-1}},\]
where we have used \eqref{eq:ptight} and \eqref{eq:mark}. Since $r$ was still to be chosen, let us fix it as
\[r = \left(\frac{9}{\eps}\right)^{\frac{1}{p-1}}, \quad\text{so that}\quad \|\mu - \overline{\mu}\|_{\KR} < \frac{2\eps}{3}.\]
This shows that we can approximate any balanced $\mu \in B^p$ by a measure supported on finitely many points. To simplify further into a finite set, we need to quantize the amounts that these points are charged with. For this, fix $q \in \Z$ with $q > \frac{3r}{\eps} \big( m(r,\eps) + 2 \big)$, so that $1/q$ is the smallest possible increment of mass. With it we define
\[\overline{\overline{\mu}} = \sum_{j=1}^{m(r,\eps)} \frac{p_j}{q} \big(\delta_{z_j} - \delta_{e}\big) + \frac{p_\infty}{q} \big(\delta_{z_\infty} - \delta_{e}\big),\]
where $p_j, p_\infty$ are the integer parts of $q \mu(E_j)$ and $q \mu\big(Z \setminus K_{e,r}\big)$. Since each term is also balanced and charging only two points, we can estimate
\[\big\|\overline{\mu} - \overline{\overline{\mu}}\big\|_{\KR} \leq \sum_{j=1}^{m(r,\eps)} \frac{1}{q} d(z_j,e) + \frac{1}{q} d(z_\infty,e) \leq m(r,\eps) \frac{r}{q} + \frac{2r}{q} < \frac{\eps}{3},\]
and finally $\big\|\mu - \overline{\overline{\mu}}\big\|_{\KR} < \eps$.

In particular, we obtain that $\mu^0_{n_k}$ converges to $\mu_\infty^0$ with respect to $\|\cdot\|_{\KR}$. Since, up to extracting a further subsequence $\mu_{n_k}(Z) \rightarrow t$ for some $t \in \R$, this implies using \eqref{eq:KRu-directsum} that $\mu_{n_k}=\mu^0_{n_k}+\mu_{n_k}(Z)\delta_e$ converges in $\|\cdot\|_{\KRu}$ to $\mu_\infty:=\mu^0_\infty+t\delta_e$. It remains to show that 
\begin{equation}\label{eq:in_p_ball}
        \int_Z (1+d(z,e)^p) \, \d \abs{\mu_\infty}(z) \leq 1.
    \end{equation}
By \eqref{eq:weakconvmu}, we have that 
\begin{equation}
\int_Z \varphi(z) \d \mu_{n_k}(z) \xrightarrow[k \to \infty]{} \int_Z \varphi(z) \d \mu^0_\infty(z)+t\varphi(e) \quad\text{for all } \varphi \in C_b(Z).\end{equation}
This together with Lemma~\ref{lem:lsc_qmoments} yields \eqref{eq:in_p_ball}, which concludes the proof.
\end{proof}

\begin{remark}
We point out that the separability assumption could be replaced by assuming $Z$ to be connected. On the one hand, by Stone's theorem (see \cite[Thm.~IX.5.2]{dugundji}) all metric spaces are paracompact, while a connected, locally compact and paracompact topological space is also $\sigma$-compact \cite[pp.~460]{Spi79}, which in turn implies \cite[Thm.~XI.7.2]{dugundji} that $Z$ is Lindel\"of. Being a metric space, the Lindel\"of property is equivalent \cite[Thm.~IX.5.6]{dugundji} to $Z$ being separable.
\end{remark}

\paragraph{Lack of compactness for parameter spaces which are not locally compact}
The following simple example shows that in the absence of local compactness an analogue of \cref{thm:compact-embedding-locally-compact} does not hold.
\begin{example}
    Let $H$ be an infinite-dimensional Hilbert space and $Z= \{ z \in H \colon \|z\|_{H} \leq 1\}$ be the closed unit ball equipped with the norm-induced metric. Let $\{z_j\}_{j\in\N} \subset Z$ be an orthonormal sequence and $\mu_j \defeq \delta_{z_j}$ be the sequence of Dirac deltas placed at $z_j$. Choosing $e=0$ as the base point in $Z$, we have for all $j$ that
\begin{equation*}
    \int_Z (1+d(z,e)^p) \, \d \abs{\mu_j}(z) \leq 2 \int_Z \d \abs{\mu_j}(z) = 2.
\end{equation*}
However, for any $j \neq k$ we have $\norm{\mu_j - \mu_k}_\KRu = \abs{(\mu_j - \mu_k)(Z)} +  d(z_j,z_k) = \sqrt{2}$, and there is no convergent subsequence.
\end{example}

Of course, one can equip $Z$ with the weak$^*$ topology and, if $H$ is separable, metrize it to obtain a compact metric space. However, in this case one can only consider pairings with weak$^*$ continuous functions, which can be restrictive. This scenario has been studied in~\cite{korolev2022two}.

\section{Regularized minimization problems in \texorpdfstring{$\KRu$}{KRu}}\label{sec:regprobs}

In this Section~we define the optimization problems we will consider in our analysis. Such problems generalize the $\KRu$-regularized risk minimization problems introduced in \eqref{eq:risk_new}.
We consider the following family of problems for $\alpha \geq 0$ and $\beta >0$:
\begin{equation}\label{eq:opt-prob}
    \inf_{\mu \in\KRu(Z)}F(\mathcal{A}\mu)+ G_{\alpha,\beta}(\mu),
\end{equation}
where $(Z,d)$ is a complete, separable, locally compact, geodesic and pointed metric space with base point $e \in Z$, the operator $ \mathcal{A} : \KRu(Z) \to H$ is linear and continuous with values in the Hilbert space $H$, the functional
$ F : H \to (-\infty,+\infty] $
is proper, convex, coercive and lower semicontinuous,
and $G_{\alpha,\beta} \colon \text{KR}(Z) \rightarrow [0,\infty]$ is defined analogously to~\eqref{eq:G-definition-Rd}:
\begin{equation}\label{eq:G-definition}
     G_{\alpha,\beta}(\mu) \defeq \left\{ 
     \begin{array}{ll}
         \alpha\|\mu\|_{\KRu} + \beta \displaystyle \int_Z ( 1 + d(z,e)^p ) \, \d\abs{\mu}(z)   &  \text{if }\mu \in \mathcal{M}_p(Z),\\
          +\infty & \text{otherwise} 
     \end{array}
     \right.
 \end{equation}
 for $p \geq 1$.
To recover the ERM problem \eqref{eq:risk_new} one can consider $\Theta \subset \R^{d+1}$ to be a set of weights with base point $e \in \Theta$ and define the operator $ \mathcal{A} : \KRu(\Theta) \to \R^N$ as
\begin{align}\label{eq:A}
(\mathcal{A} \mu)_i = \scal{\mu}{\sigma(\langle \cdot , x_i\rangle)}{\KRu(\Theta)}{\Lip(\Theta)},
\end{align}
for a collection of data points $\{x_1,\ldots, x_N\} \subset \R^{d+1}$ and any Lipschitz activation function $\sigma$. It can be readily verified that such $\mathcal{A}$ is linear and continuous. Finally, the fidelity term  $F: \R^N \to (-\infty,+\infty]$ can be chosen of the form 
\begin{align}\label{eq:F}
F(w_1,\ldots,w_N)=\frac{1}{N} \sum_{i=1}^N L(y_i,w_i) \quad \text{ with }\{y_1, \ldots, y_N\} \subset \R, 
\end{align}
for a loss $L$ that is convex, coercive and lower semicontinuous in the second entry.

\begin{remark}
The choice of $p$ will  be crucial for the well-posedness properties of the variational problem \eqref{eq:opt-prob}. In particular, we will show that if $p>1$ the regularizer $G_{\alpha,\beta}$ enforces compactness on \eqref{eq:opt-prob}, which allows us to prove existence of minimizers as a consequence of the direct method of calculus of variations. On the other hand, if $p=1$, we lose any a priori compactness and the effect on the regularizer is not enough to ensure existence of minimizers.
\end{remark}

\subsection{Well-posedness for \texorpdfstring{$p>1$}{p>1}}

We now show that by choosing $p>1$ in the definition of $G_{\alpha,\beta}$, the minimization problem \eqref{eq:opt-prob} and thus also the risk minimization problem in \eqref{eq:risk_new} admit solutions. A crucial role will be played by Theorem~\ref{thm:compact-embedding-locally-compact} ensuring compactness of the sublevel sets of 
$G_{\alpha,\beta}$.

\begin{theorem}\label{thm:lsc}
    If $p>1$, the regularizer $G_{\alpha,\beta}$ is \lsc{} w.r.t. the $\KRu$ norm.
\end{theorem}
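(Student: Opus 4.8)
The plan is to prove lower semicontinuity of $G_{\alpha,\beta}$ with respect to the $\KRu$ norm by combining two ingredients: the optimal-transport/Kantorovich-Rubinstein duality (Theorem~\ref{thm:dualityKR}) to control the $\|\cdot\|_{\KRu}$ term, and the narrow lower semicontinuity of $p$-moments already established in Lemma~\ref{lem:lsc_qmoments} to control the weighted total-variation term. The key observation is that $\KRu$-convergence of a sequence $\{\mu_n\}$ to some $\mu_\infty$ implies $\int_Z f \, \d\mu_n \to \int_Z f \, \d\mu_\infty$ for every $f \in \Lip(Z)$, simply by definition of the $\KRu$ norm as a supremum over such $f$ (this is exactly the first line of the proof of Lemma~\ref{lem:KR_implies_w*}). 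So I would start by fixing a sequence $\mu_n \xrightarrow{\KRu} \mu_\infty$ and, without loss of generality, passing to a subsequence realizing $\liminf_n G_{\alpha,\beta}(\mu_n)$, which I may assume is finite (otherwise there is nothing to prove), so that in particular $\mu_n \in \mathcal{M}_p(Z)$ with uniformly bounded weighted mass $\int_Z (1+d(z,e)^p)\,\d|\mu_n| \le C$.

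For the $\KRu$-norm term, since $\|\cdot\|_{\KRu}$ is itself a norm and $\mu_n \to \mu_\infty$ in that norm, we trivially get $\|\mu_\infty\|_{\KRu} = \lim_n \|\mu_n\|_{\KRu}$; so that term is actually continuous, not merely lower semicontinuous. The real work is in the weighted TV term $\beta \int_Z (1+d(z,e)^p)\,\d|\mu|$. Here I would argue that the uniform bound $\int_Z(1+d(z,e)^p)\,\d|\mu_n| \le C$ together with Chebyshev's inequality on the function $z \mapsto d(z,e)$ (exactly as in \eqref{eq:ptight} in the proof of Theorem~\ref{thm:compact-embedding-locally-compact}) gives uniform tightness of $\{|\mu_n|\}$, using local compactness (Hopf--Rinow) to ensure the balls $K_{e,r}$ are compact; combined with the uniform TV bound, Prokhorov's theorem (Theorem~\ref{thm:prok}) yields a further subsequence converging narrowly to some $\tilde\mu \in \mathcal{M}(Z)$. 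Narrow convergence tests in particular against all $f \in C_b(Z) \cap \Lip(Z)$, and one checks (approximating compactly supported Lipschitz functions, or using the tightness to cut off) that the narrow limit $\tilde\mu$ must coincide with $\mu_\infty$ on such test functions, hence $\tilde\mu = \mu_\infty$ as measures; in particular $\mu_\infty \in \mathcal{M}_p(Z)$. Then Lemma~\ref{lem:lsc_qmoments} applied to this narrowly convergent subsequence gives $\int_Z(1+d(z,e)^p)\,\d|\mu_\infty| \le \liminf_n \int_Z(1+d(z,e)^p)\,\d|\mu_n|$, and adding the (continuous) $\KRu$ term finishes the bound $G_{\alpha,\beta}(\mu_\infty) \le \liminf_n G_{\alpha,\beta}(\mu_n)$.

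The main obstacle I anticipate is the identification of the narrow limit $\tilde\mu$ (obtained from Prokhorov) with the $\KRu$-limit $\mu_\infty$. The two notions of convergence test against different classes of functions — bounded continuous versus Lipschitz (possibly unbounded at infinity) — so one needs a density/truncation argument: for $f \in C_b(Z)$, approximate by a compactly supported Lipschitz function $g$ with $\|f-g\|_\infty$ small on a large ball $K_{e,r}$, use the uniform tightness of $|\mu_n|$ (and of $|\mu_\infty|$ and $|\tilde\mu|$, the latter from Lemma~\ref{lem:lsc_qmoments}) to bound the error outside $K_{e,r}$ uniformly in $n$, and then pass to the limit along the narrow subsequence to conclude $\int f\,\d\tilde\mu = \lim_n \int f\,\d\mu_n = \int f\,\d\mu_\infty$. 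A mild subtlety is that the base-point mass $\mu_n(Z)\delta_e$ should be tracked (as in the decomposition $\mu_n = \mu_n^0 + \mu_n(Z)\delta_e$ used repeatedly above), but since $\mu_n(Z) \to \mu_\infty(Z)$ follows from $\KRu$-convergence, this is harmless and the argument localizes cleanly to the balanced parts. Everything else is routine.
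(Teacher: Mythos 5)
Your overall strategy is sound, but it is a genuinely different and much longer route than the paper's. The paper's proof is essentially two lines: the $t$-sublevel set of the moment term $G_{0,\beta}$ is a dilate of the set $B^p=\{\mu:\int_Z(1+d(z,e)^p)\,\d|\mu|\leq 1\}$, which \cref{thm:compact-embedding-locally-compact} shows is \emph{compact} in $\KRu(Z)$, hence closed; closedness of all sublevel sets is equivalent to lower semicontinuity, and adding the $\KRu$-continuous term $\alpha\|\cdot\|_{\KRu}$ preserves it. Your proof instead re-runs, from scratch, the tightness/Prokhorov/limit-identification machinery that constitutes the first half of the proof of \cref{thm:compact-embedding-locally-compact}. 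That is legitimate, but it buys nothing over simply citing the compactness theorem, which you are in any case allowed to use.

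The one step in your argument that is genuinely under-justified is the identification $\tilde\mu=\mu_\infty$. Two issues. First, $\mu_\infty$ is a priori only an element of the completion $\KRu(Z)$, not a measure, so the expression $\int_Z f\,\d\mu_\infty$ for $f\in C_b(Z)$ in your concluding display is undefined; the correct statement is that $\tilde\mu$ and $\mu_\infty$ define the same element of $\KRu(Z)$, which by Hahn--Banach requires showing $\scal{\mu_\infty-\tilde\mu}{f}{\KRu(Z)}{\Lip(Z)}=0$ for \emph{all} $f\in\Lip(Z)$, including unbounded ones --- agreement on $C_b(Z)\cap\Lip(Z)$ does not immediately suffice. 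Second, your proposed control of the error outside $K_{e,r}$ uses only uniform tightness of $|\mu_n|$ (i.e.\ $\sup_n|\mu_n|(Z\setminus K_{e,r})\to 0$), which already holds for $p=1$ and is \emph{not} enough to handle an unbounded Lipschitz test function $f$, since $|f|\lesssim |f(e)|+L(f)\,d(\cdot,e)$ on $Z\setminus K_{e,r}$. What is actually needed is $\sup_n\int_{Z\setminus K_{e,r}}d(z,e)\,\d|\mu_n|(z)\leq C r^{-(p-1)}\to 0$, i.e.\ uniform integrability of $d(\cdot,e)$, which is exactly where $p>1$ enters (this is the role of the auxiliary measures $\nu_n$ and estimate \eqref{eq:mark} in the proof of \cref{thm:compact-embedding-locally-compact}). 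With that estimate, truncating $f$ at height $k$ and interchanging limits closes the gap; alternatively one can invoke the fact that on norm-bounded subsets of $\Lip(Z)\cong\KRu(Z)^*$ pointwise convergence implies weak$^*$ convergence, so that bounded Lipschitz functions separate points of $\KRu(Z)$. As written, though, neither route is present, and this is precisely the nontrivial part of the argument.
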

\begin{proof}
By Theorem~\ref{thm:compact-embedding-locally-compact}, for all $t\in\R$ the $t$-sublevel set of $G_{0,\beta}\colon \KRu(Z) \rightarrow [0,\infty]$
\[
\{\mu\in \KRu(Z):G_{0,\beta}(\mu)\leq t\}=\left\{\mu\in\mathcal{M}_p(Z):\int_Z(1+d(z,e)^p){\rm d}|\mu|(z)\leq \frac{t}{\beta}\right\}
\]
is compact and consequently also closed in $\KRu(Z)$. This is equivalent to $G_{0,\beta}$ being lower semicontinuous w.r.t. the $\KRu$ norm. Furthermore, $G_{\alpha,0}\colon \KRu(Z) \rightarrow [0,\infty]$ is continuous with respect to the $\rm KRu$ convergence, and the proof follows.
\end{proof}

\begin{proposition}\label{prop:G-compact-KR}
    If $p >1$, the sublevel sets of $G_{\alpha,\beta}$ are compact in the $\KRu$ topology.
\end{proposition}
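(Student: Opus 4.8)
The plan is to derive this directly from the two preceding results: the compactness of $B^p$ in $\KRu(Z)$ from \cref{thm:compact-embedding-locally-compact} and the lower semicontinuity of $G_{\alpha,\beta}$ from \cref{thm:lsc}, combined with the elementary fact that a closed subset of a compact set is compact. So there is no genuine new obstacle here; the proof is a short assembly of what has already been established, and the only thing to watch is that the moment penalty alone already forces membership in a dilate of $B^p$, so that the hypothesis $p>1$ is exactly what is needed.

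Concretely, fix $t\in\R$ and consider the sublevel set $S_t:=\{\mu\in\KRu(Z):G_{\alpha,\beta}(\mu)\leq t\}$. If $t<0$ then $S_t=\emptyset$ and there is nothing to prove, so assume $t\geq 0$. I would first show $S_t$ is contained in a dilate of $B^p$. If $\mu\in S_t$, then in particular $\mu\in\mathcal{M}_p(Z)$, and since $\alpha\|\mu\|_{\KRu}\geq 0$ we have
\[
\beta\int_Z(1+d(z,e)^p)\,\d\abs{\mu}(z)\leq G_{\alpha,\beta}(\mu)\leq t,
\]
hence $\int_Z(1+d(z,e)^p)\,\d\abs{\mu}(z)\leq t/\beta$, i.e. $\mu\in(t/\beta)B^p$ (and $S_0=\{0\}$ when $t=0$). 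Since the dilation $\mu\mapsto\lambda\mu$ is a homeomorphism of $\KRu(Z)$ for any $\lambda>0$ and $B^p$ is compact in $\KRu(Z)$ by \cref{thm:compact-embedding-locally-compact} (this is where $p>1$ enters), the set $(t/\beta)B^p$ is compact in $\KRu(Z)$, so $S_t$ is contained in a compact set.

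It then remains to note that $S_t$ is closed in $\KRu(Z)$: this is immediate from \cref{thm:lsc}, which asserts that $G_{\alpha,\beta}$ is lower semicontinuous with respect to the $\KRu$ norm, so that all its sublevel sets are $\KRu$-closed. Being a closed subset of the compact set $(t/\beta)B^p$, the sublevel set $S_t$ is compact, and since $t$ was arbitrary this proves the claim. The only mildly subtle point is the one already highlighted — that the $\KRu$-norm term in $G_{\alpha,\beta}$ being nonnegative is inessential, and it is the $p$-moment term with $p>1$ that provides the compactness via \cref{thm:compact-embedding-locally-compact}.
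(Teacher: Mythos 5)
Your proof is correct and follows essentially the same route as the paper: both arguments bound the sublevel set inside a dilate of the compact set $B^p$ from \cref{thm:compact-embedding-locally-compact} (using that the $\KRu$ term is nonnegative and $\beta>0$) and then invoke the lower semicontinuity from \cref{thm:lsc} to conclude that the sublevel set is a closed subset of a compact set. No gaps.
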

\begin{proof}
Theorem~\ref{thm:compact-embedding-locally-compact} together with the inclusion
   \begin{align*}
       &\{\mu\in \KRu(Z):G_{\alpha,\beta}(\mu)\leq t\} \\
       \subseteq &\{\mu\in \KRu(Z):G_{\alpha,0}(\mu)\leq t\}\cap\{\mu\in \KRu(Z):G_{0,\beta}(\mu)\leq t\}
   \end{align*}
imply that the sublevel sets of $G_{\alpha,\beta}$ are compact in the $\KRu$ topology as, due to \cref{thm:lsc}, are closed subsets of compact sets.
\end{proof}

\begin{theorem}\label{thm:existence_of_minimizers}
    If $p>1$, the optimization problem~\eqref{eq:opt-prob} admits a minimizer.
\end{theorem}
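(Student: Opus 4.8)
The plan is to invoke the direct method of the calculus of variations; all the ingredients needed have in fact already been prepared in the preceding results, so the argument is short. The decisive point is the compactness of the sublevel sets of $G_{\alpha,\beta}$ in the $\KRu$ topology established in \cref{prop:G-compact-KR} (which is exactly where the hypothesis $p>1$ is used, through \cref{thm:compact-embedding-locally-compact}); together with the lower semicontinuity of $G_{\alpha,\beta}$ from \cref{thm:lsc}, the continuity of $\mathcal{A}$, and the standing assumptions on $F$, it suffices to push a minimizing sequence through a routine compactness-and-lower-semicontinuity argument.

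Concretely, first I would set $m$ to be the infimum in \eqref{eq:opt-prob}. One checks $m<+\infty$ by testing with the zero measure, which lies in $\M_p(Z)$, so that $G_{\alpha,\beta}(0)=0$ and $m\le F(0)$; if $m=+\infty$ there is nothing to prove. Since $F$ is proper, convex, \lsc{} and coercive on the Hilbert space $H$, it is bounded below, say $F\ge -C$; in particular $m\ge -C$ is finite. Taking a minimizing sequence $\{\mu_n\}_{n\in\N}$ with $F(\mathcal{A}\mu_n)+G_{\alpha,\beta}(\mu_n)\to m$, for $n$ large one then has $G_{\alpha,\beta}(\mu_n)\le m+1+C$, so the tail of $\{\mu_n\}_{n\in\N}$ is confined to a fixed sublevel set of $G_{\alpha,\beta}$.

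Next, by \cref{prop:G-compact-KR} this sublevel set is compact in $\KRu(Z)$, so I can extract a subsequence $\mu_{n_k}\to\mu^*$ in $\|\cdot\|_{\KRu}$. Continuity of $\mathcal{A}$ yields $\mathcal{A}\mu_{n_k}\to\mathcal{A}\mu^*$ strongly in $H$, hence $F(\mathcal{A}\mu^*)\le\liminf_k F(\mathcal{A}\mu_{n_k})$ by lower semicontinuity of $F$; and $G_{\alpha,\beta}(\mu^*)\le\liminf_k G_{\alpha,\beta}(\mu_{n_k})$ by \cref{thm:lsc}. Adding the two inequalities and using superadditivity of $\liminf$ (all quantities involved being finite, since both terms are bounded below and their sum is eventually $\le m+1$) gives $F(\mathcal{A}\mu^*)+G_{\alpha,\beta}(\mu^*)\le m$, so $\mu^*$ is a minimizer.

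As for difficulty: there is essentially no obstacle in this proof — the real work is \cref{thm:compact-embedding-locally-compact}. Without the moment penalization with exponent strictly greater than $1$ the compactness, and hence this whole scheme, fails (compare \cref{ex:dipolesnotconv} and the ill-posedness examples for $p=1$). The only mild points of care are (i) ensuring the minimizing sequence can be trapped in a single sublevel set, which is why boundedness below of $F$ is needed, and (ii) that only \emph{strong} convergence $\mathcal{A}\mu_{n_k}\to\mathcal{A}\mu^*$ in $H$ is available, so it is ordinary (not weak) lower semicontinuity of $F$ that is used — harmless, since $F$ is convex and \lsc{} anyway.
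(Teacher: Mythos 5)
Your proof is correct and follows essentially the same route as the paper's: a direct-method argument that traps the tail of a minimizing sequence in a sublevel set of $G_{\alpha,\beta}$ (using that $F$ is bounded below), extracts a $\KRu$-convergent subsequence via \cref{prop:G-compact-KR}, and passes to the limit using continuity of $\mathcal{A}$ together with the lower semicontinuity of $F$ and of $G_{\alpha,\beta}$ from \cref{thm:lsc}. Your additional remarks on finiteness of the infimum and on why only strong (not weak) lower semicontinuity of $F$ is needed are accurate but not essential.
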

\begin{proof}
    Let $\{\mu_n\}_{n\in\N} \subset \M_p(Z)$ be a minimizing sequence for~\eqref{eq:opt-prob}. Note that $F\circ \mathcal{A}$ is bounded from below and in particular this means that there exist $N\in\N$ and $M>0$ such that  $G_{\alpha,\beta}(\mu_n)\leq M$ for all $n\geq N$, and \cref{prop:G-compact-KR} guarantees the existence of a convergent subsequence $\{\mu_{n_k}\}_{k\in\N}$ such that $\norm{\mu_{n_k} - \mu^*}_{\KRu} \to 0$ for some $\mu^* \in \M_p(Z)$. Since $\calA \colon \
    \KRu(Z) \to H$ is continuous, $F$ is \lsc{} on $H$ and, due to \cref{thm:lsc},  $G_{\alpha,\beta}$ is \lsc{} on $\KRu(Z)$,  the function $F\circ \mathcal{A}+ G_{\alpha,\beta}: \KRu(Z)\to (-\infty , +\infty]$ is lower semicontinuous on $\KRu(Z)$. Thus, 
    \[F(\mathcal{A}\mu^*)+ G_{\alpha,\beta}(\mu^*)\leq \liminf_k F(\mathcal{A}\mu_{n_k})+ G_{\alpha,\beta}(\mu_{n_k}) = \inf_{\mu\in \KRu(Z)} F(\mathcal{A}\mu)+ G_{\alpha,\beta}(\mu), \]
and $\mu^*$ is a minimizer of~\eqref{eq:opt-prob}     
\end{proof}

\subsection{(Non)existence of minimizing measures when \texorpdfstring{$p=1$}{p=1}}
\label{sec:nonexistence}

\textbf{Lack of compactness of the regularizer.}
While in the previous Section~we showed that the variational problem \eqref{eq:opt-prob} is well-posed in the case $p >1$, if $p=1$ it is not difficult to show that the sublevel sets of $G_{\alpha,\beta}$ are not necessarily compact. 

\begin{example}\label{ex:remark1}
Given a metric space $(Z,d)$ with base point $e \in Z$ we can consider the sequence of empirical measures $\mu_n = \frac{\delta_{z_n}}{d(z_n,e)} \in \mathcal{M}_1(Z)$ with $z_n \in Z$ and $d(z_n,e) \rightarrow +\infty$. It holds that 
\begin{align*}
    \|\mu_n\|_{\KRu} = \frac{1}{d(z_n,e)} +  \sup \left\{\int_Z f \, \d\mu_n \colon f(e) = 0, \, L(f) \leq 1\right\} =  \frac{1}{d(z_n,e)} + 1
\end{align*}
and thus for $n$ big enough
\begin{align*}
G_{\alpha,\beta}(\mu_n) \leq  2\alpha + \beta\int_Z (1 + d(z,e)) \, \d\frac{\delta_{z_n}}{d(z_n,e)}(z)  = 2\alpha+ \beta\frac{1 + d(z_n,e)}{d(z_n,e)} \leq C(\alpha,\beta)   
\end{align*}
for a positive constant $C(\alpha,\beta)   $ not depending on $n$. Note that $\mu_n$ converges to the zero measure in total variation, while for the $1$-Lipschitz test function $\varphi(z) = d(z,e)$ it holds that
$\int_Z \varphi(z) \, \d\mu_n(z) = 1$. 
This implies, in particular, the non-existence of the weak KRu limit along any subsequence of $\mu_n$.

\end{example}
This lack of compactness translates to minimizing sequences of \eqref{eq:opt-prob}. This is shown in the following example.
\begin{example}
Given a metric space $(Z,d)$ with base point $e \in Z$ consider the following minimization problem 
\begin{align*}
    \inf_{\mu \in \KRu(Z)} \Big | \int_{Z} d(z,e)\, d\mu(z) - 1\Big|^2 + G_{1/2,1/2}(\mu)
\end{align*}
that is a specific instance of \eqref{eq:opt-prob} for $H = \R$, $\mathcal{A}\mu = \scal{\mu}{d(\cdot,e)}{\KRu(Z)}{\Lip_0(Z)}$, $F(t) = |t-1|^2$ and $\alpha = \beta = \frac{1}{2}$. 
Note that 
\begin{align*}
    &\Big | \int_{Z} d(z,e)\, \d\mu(z) - 1\Big|^2 + G_{1/2,1/2}(\mu)\\
    \geq & \Big | \int_{Z} d(z,e)\, \d\mu(z) - 1\Big|^2 + \frac{1}{2}\int_{Z} (1 + d(z,e))\, \d|\mu|(z) + \frac{1}{2}\|\mu\|_{\KRu} \\
    \geq & \Big | \int_Z d(z,e)\, \d\mu(z)\Big|^2 + 1 - 2 \int_Z d(z,e)\, \d\mu(z) + \frac{1}{2}\int d(z,e)\, \d|\mu|(z) + \frac{1}{2}\|\mu\|_{\KRu}\\
    \geq &  \Big | \int_Z d(z,e)\, \d\mu(z)\Big|^2 + 1 - \int_Z d(z,e)\, \d\mu(z) \geq \frac{3}{4}
\end{align*}
for all $\mu \in \KRu(Z)$. 
Choosing any sequence $z_n \in Z$ such that $d(z_n,e) \rightarrow +\infty$ consider now the sequence of measures $\mu_n = \frac{\delta_{z_n}}{2d(z_n,e)} \in \mathcal{M}_1(Z)$ and observe that  $  | \int_{Z} 
d(z,e)\, \d\mu_n(z) - 1|^2 = \frac{1}{4}$ and 
\begin{align*}
   \lim_{n\rightarrow +\infty} \int_{Z} (1 + d(z,e))\, \d|\mu_n|(z) = \lim_{n\rightarrow +\infty} \frac{1}{2d(z_n,e)} + \frac{1}{2} = \frac{1}{2}.
\end{align*}
Moreover $\|\mu_n\|_{\KRu} \rightarrow \frac{1}{2}$ as $n\rightarrow +\infty$. Indeed $|\mu_n(Z)| = \frac{1}{2d(z_n,e)} \rightarrow 0$ and for every $f \in {\rm Lip}_0(Z)$ such that $L(f) \leq 1$ it holds that 
\begin{align*}
    \int_{Z} f(z)\, \d|\mu_n|(z) = \frac{f(z_n)}{2d(z_n,e)} \leq \frac{1}{2} 
\end{align*}
with equality achieved for $f(z) = d(z,e)$.
Therefore 
\begin{align*}
\lim_{n\rightarrow +\infty}\Big | \int_{Z} d(z,e)\, & \d\mu_n(z) - 1\Big|^2 + G_{1/2,1/2}(\mu_n) = \frac{1}{4} +\frac{1}{2} = \frac{3}{4}. 
\end{align*}
This implies that the sequence $\mu_n$ is a minimizing sequence for \eqref{eq:opt-prob} that does not admit a weak converging subsequence in KRu as shown in Example \ref{ex:remark1}.
\end{example}

\paragraph*{Pushing mass to infinity prevents existence of nontrivial minimizers} 
Building on this reasoning it is possible to show that for natural (but non-compact) choices of the weight space $\Theta \subset \R^{d+1}$, the $\KRu$-regularized ERM problem \eqref{eq:risk_KR} with $p=1$ is ill-posed.
We start with a lemma that deals with a slightly more general setting than the one needed for  $\KRu$-regularized ERM problems.

\begin{lemma}\label{lem:nonex}
    For a Hilbert space $H$, let $\mathcal{A} : \KRu(\R^{d+1}) \rightarrow H$ defined as 
    \begin{align}\label{eq:opp}
        \mathcal{A}\mu = \scal{\mu}{\phi}{\KRu(\R^{d+1})}{\Lip(\R^{d+1})}
    \end{align}
    for $\varphi : \R^{d+1} \rightarrow H$ that is Lipschitz and $1$-homogeneous. Let $(Z,d)$ be the space $\R^{d+1}$ endowed with the Euclidean distance and the origin as a base point. Then \eqref{eq:opt-prob} with $p=1$ either has $\mu =0$ as minimizer or it does not admit a minimizer.
\end{lemma}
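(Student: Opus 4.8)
\medskip
\noindent\textbf{Proof idea.} The plan is to exploit the positive $1$-homogeneity of $\varphi$ by dilating the support of any candidate measure while simultaneously rescaling its mass, in such a way that the fidelity value $F(\mathcal{A}\mu)$ and the first moment $\int\|z\|\d|\mu|(z)$ are left unchanged but the total variation is strictly decreased; this immediately rules out any nonzero minimizer.

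Concretely, fix $\mu\in\mathcal{M}_1(\R^{d+1})$ with $\mu\neq0$, let $D_n:z\mapsto nz$ and set $\mu_n:=\tfrac1n(D_n)_{\#}\mu\in\mathcal{M}(\R^{d+1})$. Since $|\mu_n|=\tfrac1n(D_n)_{\#}|\mu|$, the change of variables $z\mapsto nz$ gives $\|\mu_n\|_{\TV}=\tfrac1n\|\mu\|_{\TV}$, $|\mu_n(\R^{d+1})|=\tfrac1n|\mu(\R^{d+1})|$ and $\int\|z\|\d|\mu_n|(z)=\int\|z\|\d|\mu|(z)$; in particular $\mu_n\in\mathcal{M}_1(\R^{d+1})$. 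Using $\varphi(nz)=n\varphi(z)$ (valid for $n>0$) and the same change of variables, $\mathcal{A}\mu_n=\int\varphi(z)\d\mu_n(z)=\tfrac1n\int\varphi(nz)\d\mu(z)=\int\varphi(z)\d\mu(z)=\mathcal{A}\mu$, so $F(\mathcal{A}\mu_n)=F(\mathcal{A}\mu)$. Finally, the Lipschitz part of the $\KRu$-norm is invariant: writing $\|\nu\|_{\KRu}=|\nu(\R^{d+1})|+\sup\{\int f\d\nu:f(0)=0,\ L(f)\leq1\}$, the map $f\mapsto\tfrac1n f(n\,\cdot)$ is a bijection of the feasible set (it preserves $f(0)=0$ and the Lipschitz constant) with $\int f\d\mu_n=\int\tfrac1n f(n\,\cdot)\d\mu$, so the supremum is unchanged.

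Collecting these facts for $p=1$ and using $\int(1+\|z\|)\d|\nu|(z)=\|\nu\|_{\TV}+\int\|z\|\d|\nu|(z)$, one obtains, provided $F(\mathcal{A}\mu)<+\infty$,
\[
F(\mathcal{A}\mu_n)+G_{\alpha,\beta}(\mu_n)=F(\mathcal{A}\mu)+G_{\alpha,\beta}(\mu)-\Bigl(1-\tfrac1n\Bigr)\bigl(\alpha|\mu(\R^{d+1})|+\beta\|\mu\|_{\TV}\bigr).
\]
Since $\beta>0$ and $\mu\neq0$ forces $\|\mu\|_{\TV}>0$, already $n=2$ yields $F(\mathcal{A}\mu_2)+G_{\alpha,\beta}(\mu_2)<F(\mathcal{A}\mu)+G_{\alpha,\beta}(\mu)$. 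The dichotomy now follows: if the infimum in \eqref{eq:opt-prob} equals $+\infty$ then $\mu=0$ is trivially a minimizer; otherwise the infimum is finite, $G_{\alpha,\beta}(\mu)=+\infty$ whenever $\mu\notin\mathcal{M}_1(\R^{d+1})$, and for nonzero $\mu\in\mathcal{M}_1(\R^{d+1})$ either the displayed estimate (when $F(\mathcal{A}\mu)<\infty$) or the bound $F(\mathcal{A}\mu)=+\infty$ shows that the objective at $\mu$ strictly exceeds the infimum. Hence no nonzero measure can be a minimizer, i.e.\ a minimizer, if it exists, must equal $\mu=0$.

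There is essentially no analytic obstacle; the one point to carry out carefully rather than merely assert is the behaviour of the rescaling $\mu\mapsto\tfrac1n(D_n)_{\#}\mu$ — that it preserves $\mathcal{A}\mu$ (this is precisely where $1$-homogeneity enters, and it forces the $1/n$ prefactor) and the first moment, while scaling $\|\mu\|_{\TV}$ and $|\mu(\R^{d+1})|$ by $1/n$; in particular one must verify that the Lipschitz-supremum part of $\|\cdot\|_{\KRu}$ is genuinely invariant, not merely bounded. This is the rigorous form of the ``pushing mass to infinity'' mechanism already illustrated in Example~\ref{rem:remark1}.
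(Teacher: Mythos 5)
Your proposal is correct and follows essentially the same route as the paper: rescale via $\mu\mapsto\tfrac1R(\Phi_R)_{\#}\mu$ with $\Phi_R(z)=Rz$, use $1$-homogeneity of $\varphi$ to keep $\mathcal{A}\mu$ and the first moment fixed while the total variation (and $|\mu(\R^{d+1})|$) shrink by the factor $1/R$, so no nonzero $\mu$ can be optimal. Your version is in fact slightly sharper on one point — you show the Lipschitz-supremum part of $\|\cdot\|_{\KRu}$ is exactly invariant via the bijection $f\mapsto\tfrac1n f(n\,\cdot)$, where the paper only records the inequality $\|\mu_R\|_{\KRu}\leq\|\mu\|_{\KRu}$ — and you handle the $F=+\infty$ edge cases explicitly, but these are refinements of the same argument rather than a different proof.
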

\begin{proof}
Define a rescaling $\Phi_R(z)=Rz$ with $R >1$, and for any $\mu \in \M_1(\R^{d+1})$ the measure $\mu_R = \frac{1}{R}\big(\Phi_R\big)_\# \mu$.
Then by the homogeneity of $\varphi$ we have
\begin{align*}
& \mathcal{A} \mu_R = \int_{\R^{d+1}} \varphi(z)\, \d\mu_R(z) = \int_{\R^{d+1}} \varphi(z)\, \d\mu(z) \quad \text{  and  }\\
& \int_{\R^{d+1}} \|z\|\, \d|\mu_R|(z) = \int_{\R^{d+1}} \|z\|\, \d|\mu|_R(z) = \int_{\R^{d+1}} \|z\|\, \d|\mu|(z),
\end{align*}
while $|\mu_R|(\R^{d+1})=|\mu|(\R^{d+1})/R < |\mu|(\R^{d+1})$ if $\mu \neq 0$.
Moreover, for every $f$ such that $f(e) = 0$ and $L(f) \leq 1$ it holds that 
\begin{align*}
    \int_{\R^{d+1}} f \, \d\mu_R = \frac{1}{R} \int_{\R^{d+1}} f(Rz) \, \d\mu \leq \int_{\R^{d+1}} d(e,z) \, \d\mu \leq \|\mu\|_{\KRu}, 
\end{align*}
implying that $\|\mu_R\|_{\KRu} \leq \|\mu\|_{\KRu}$. In particular if $\mu \neq 0$, then 
\begin{align*}
    F(\mathcal{A}\mu_R)+ G_{\alpha,\beta}(\mu_R) <  F(\mathcal{A}\mu)+ G_{\alpha,\beta}(\mu)
\end{align*}
showing that no $\mu \neq 0$ can be a minimizer.    
\end{proof}

Given a set of data and labels $(x_i,y_i)_{i=1}^N \subset \R^{d+1} \times \R$, let us choose now as set of weights $\Theta = \R^{d+1}$. Define also the function $\varphi: \R^{d+1} \rightarrow \R^N$ as
    \begin{align}
        (\varphi(\theta))_i = \sigma(\langle \theta,x_i\rangle) \quad i = 1,\ldots, N,
    \end{align}
    where $\sigma$ is a Lipschitz, $1$-homogeneous activation function, and the operator $\mathcal{A} : \KRu(\R^{d+1}) \rightarrow H$ is as in \eqref{eq:opp}.
    By choosing the fidelity term  $F: \R^N \to (-\infty,+\infty]$ as in \eqref{eq:F}
for a loss $L$ that is convex, coercive and lower semicontinuous in the second entry, Problem \eqref{eq:opt-prob} is the $\KRu$-regularized ERM problem \eqref{eq:risk_KR} for measures on $\Theta = \R^{d+1}$ and activation functions $\sigma$ that are Lipschitz and $1$-homogeneous.
Note, in particular, that ReLU and Leaky ReLU activation functions enter in such class. By applying Lemma~\ref{lem:nonex} we deduce that such $\KRu$-regularized ERM problems with $p=1$ have either $\mu = 0$ as minimizer or do not admit minimizers. For many choices for the labels $y_i$ the case that  $\mu = 0$ is a minimizer can be excluded and it is not very interesting in practice as well, leading to the ill-posedness of \eqref{eq:opt-prob}. 

\paragraph*{An example of existence without compactness} While for $p=1$ no compactness is, in general, available, in many cases minimizers still exist. Here we provide an example for a very specific case. We consider $Z = \mathbb{R}$ endowed with the metric $d(z,w) = |z-w|$ and the linear operator $\mathcal{A}\mu = \scal{\mu}{\varphi}{\KRu(\R)}{\Lip(\R)}$
where $\varphi : \R \rightarrow \R$ is a Lipschitz function defined as follows: $\varphi(z) = |z|$ for $z \geq 0$ and $\varphi(z)$ arbitrary and bounded for $z <0$. Choosing as fidelity term $F(t) = |t-y|$ for an arbitrary $y \in \R$, we consider the following minimization problem, that is a specific instance of \eqref{eq:opt-prob}:
\begin{align}\label{eq:innn}
    \inf_{\mu \in \KRu(\R)} \left| \int_\R \varphi(z)\, d\mu - y\right| + G_{0,1}(\mu).
\end{align}
We now prove that given $\mu \in \mathcal{M}_1(\R)$ it holds that $\mu \mres \R_-$ has lower energy.
Note that
\begin{align*}
    J(\mu) & :=\Bigg|\int_\R \varphi(z)\, \d\mu - y\Bigg|  + \int_\R (1 + |z|)\, \d|\mu| \\
    & = \left| \int_{\R_-} \varphi(z)\, \d\mu + \int_{\R_+} |z|\, \d\mu - y\right| + \int_{\R_-} (1 + |z|)\, \d|\mu| + \int_{\R_+} (1 + |z|)\, \d|\mu|  \\
    & \geq \left| \int_{\R_-} \varphi(z)\, d\mu + \int_{\R_+} |z|\, d\mu - y\right| + \int_{\R_+} |z|\, \d|\mu| + \int_{\R_-} (1 + |z|)\, \d|\mu| \\
    & = \left| \int_{\R_-} \varphi(z)\, d\mu + \int_{\R_+} |z|\, \d\mu^+ -  \int_{\R_+} |z|\, \d\mu^- - y\right| \\&\qquad+ \int_{\R_+} |z|\, \d\mu^+ + \int_{\R_+} |z|\, d\mu^-  + \int_{\R_-} (1 + |z|)\, d|\mu|.
\end{align*}
We now minimize separately in $a:=\int_{\R_+} |z|\, d\mu^+$ and $b:=\int_{\R_+} |z|\, d\mu^-$. Note that we can do that since $a$ is independent on $b$. We thus estimate the previous expression by
\begin{align*}
& \geq \min_{a \geq 0} \left| \int_{\R_-} \varphi(z)\, \d\mu + a -  \int_{\R_+} |z|\, \d\mu^- - y\right| + a + \int_{\R_+} |z|\, \d\mu^-  + \int_{\R_-} (1 + |z|)\, \d|\mu|\\
 & = \left| \int_{\R_-} \varphi(z)\, \d\mu -  \int_{\R_+} |z|\, \d\mu^- - y\right| + \int_{\R_+} |z|\, \d\mu^-  + \int_{\R_-} (1 + |z|)\, \d|\mu|\\
& \geq \min_{b \geq 0} \left|\int_{\R_-} \varphi(z)\, \d\mu -  b - y\right| + b  + \int_{\R_-} (1 + |z|)\, \d|\mu|\\
& = \left|\int_{\R_-} \varphi(z)\, \d\mu- y\right|  + \int_{\R_-} (1 + |z|)\, \d|\mu| = J(\mu \mres \R_-).
\end{align*}
In particular, this implies that every minimizing sequence of $J$, denoted by $\mu_n$, can be chosen to be supported in $\R_-$. Therefore considering the problem
\begin{align}\label{eq:miii}
     \inf_{\mu \in \mathcal{M}_1(\R)}   \left| \int_\R \frac{\varphi(z)}{1+|z|}\, \d\mu - y\right| + |\mu|(\R)
\end{align}
one can observe that if $\mu_n$ is a minimizing sequence of $J$ supported on $\R_-$, then $\frac{\mu_n}{1+|z|}$ is a minimizing sequence for  \eqref{eq:miii} again supported on $\R_-$. Therefore by weak$^*$ compactness of measures bounded in total variation we conclude existence of minimizers both for \eqref{eq:miii} and for \eqref{eq:innn}.

\section{Spaces of continuous functions with controlled growth}\label{sec:controlledgrowth}
A limitation of the approach in Section~\ref{sec:two} is that it does not apply to activation functions which are not Lipschitz, for instance, to Rectified Power Unit (RePU) activation functions $\sigma_m=\max\{0,\cdot\}^{m-1}$ with $m>2$. In this section, we present an alternative setting by interpreting the integral \eqref{eq:infinite_width_nn} as a dual pairing 
\begin{equation}\label{eq:dualparingweight}
\scal{\mu}{\sigma(\langle\cdot,x\rangle)}{C_w(\Theta)^*}{C_w(\Theta)}
\end{equation}
between a weighted space of continuous functions and its topological dual. Spaces of continuous functions with controlled growth at infinity provide a large reservoir of activation functions and, in particular, they include RePU. Furthermore, the topological duals of weighted spaces of continuous functions can be characterized as spaces of Radon measures which can be factored into the product of the weight with a bounded Radon measure \cite{summers1970dual}. 

\subsection{Weighted spaces of continuous functions and their dual spaces.}

Let $w\colon Z\to (0,+\infty)$ be a positive continuous function and consider the space $C_w(Z)$ of continuous functions $\zeta\colon Z\to\R$ satisfying 
\begin{equation}\label{eq:q_growing_condition}
    \lim_{d(z,e)\to\infty}\zeta(z)w(z)=0,
\end{equation}
endowed with the norm
$\|\zeta\|_{C_w(Z)}=\sup_{z\in Z}|\zeta(z)|w(z)$. The space $C_w(Z)$ is complete. The proof follows the lines of the one for \cite[Lemma~3.2]{NalSav21}, once noticing that equation~\eqref{eq:q_growing_condition} is equivalent to the following condition:
\begin{equation}\label{eq:q_growing_condition2}
\forall \epsilon>0\quad \exists A_\epsilon>0\, :\, |\zeta(z)|\leq A_\epsilon+\epsilon w(z)^{-1}\quad \forall z\in Z. 
\end{equation}

If $w\colon Z\to(0,+\infty)$ is a positive continuous function on $Z$, we use the notation
\[
w\cdot\mathcal{M}(Z)=\{w\cdot\mu:\mu\in\mathcal{M}(Z)\},
\]
where 
\[
\int_{Z}f(z){\rm d}(w\cdot\mu)(z)=\int_Z f(z)w(z){\rm d}\mu(z),\quad \forall f\in C_c(Z).
\]

It is immediate to verify that the space of measures $w\cdot\mathcal{M}(Z)$ embeds into the topological dual of $C_w(Z)$. Indeed, for every $\nu\in w\cdot\mathcal{M}(Z)$, the linear functional $T_\nu\colon C_w(Z)\to\mathbb{\R}$ defined as 
\[
T_\nu(\zeta)=\int_Z \zeta(z){\rm d}\nu(z)
\]
is continuous. This can be easily derived with the following computation:
\begin{equation*}
    \begin{aligned}
        |T_\nu(\zeta)|&\leq \int_Z|\zeta(z)|{\rm d}|\nu|(z)=\int_Z\frac{|\zeta(z)|}{w(z)}w(z){\rm d}|\nu|(z)\\
    &\leq\|\zeta\|_{C_w(Z)}\int_Z w(z)^{-1}{\rm d}|\nu|(z)=\|\zeta\|_{C_w(Z)}\|w^{-1}\cdot\nu\|_{\rm TV}.
    \end{aligned}
\end{equation*}
Actually, Theorem~3.1 in \cite{summers1970dual} proves that $(C_w(Z))^* \cong w \cdot \mathcal{M}(Z)$,
where $w \cdot \mathcal{M}(Z)$ is endowed with the dual norm
\begin{equation}\label{eq:dualnorm}
\begin{aligned}
\|\nu\|_{w,*}& 
=\sup_{\underset{\|\zeta\|_{C_w(Z)}\leq 1}{\zeta\in C_w(Z)}}\left\{\int_Z\zeta(z){\rm d}\nu(z)\right\}
=\sup_{\underset{\|\zeta\|_{C_w(Z)}\leq 1}{\zeta\in C_w(Z)}}\left\{\int_Z\zeta(z)w(z){\rm d}(w^{-1}\cdot\nu)(z)\right\}\\ &=\sup_{\underset{\|\varphi\|_{\infty}\leq 1}{\varphi\in C_0(Z)}}\left\{\int_Z\varphi(z){\rm d}(w^{-1}\cdot\nu)(z)\right\} =\|w^{-1}\cdot\nu\|_{\rm TV}
=\int_Z w(z)^{-1}{\rm d}|\nu|(z).
\end{aligned}
\end{equation}
Then, for any continuous linear functional $T$ on $C_w(Z)$, there exists a unique measure $\nu\in w\cdot\mathcal{M}(Z)$ such that $T=T_\nu$.
In particular, if we choose as weight the function $w_q=(1+d(\cdot,e)^q)^{-1}$, with $q\in (0,+\infty)$, the space $w_q\cdot\mathcal{M}(Z)$ coincides with the subspace $\mathcal{M}_q(Z)$ of measures with finite $q$-moments, see~\eqref{eq:def_of_Mp}. 

\begin{lemma}\label{lem:identification}
    Let $q\in(0,+\infty)$ and $w_q=(1+d(\cdot,e)^q)^{-1}$. Then, we have that
    \begin{equation}\label{eq:momentsweights}\mathcal{M}_q(Z)=w_q\cdot\mathcal{M}(Z).\end{equation}
\end{lemma}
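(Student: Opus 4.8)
The plan is to establish \eqref{eq:momentsweights} by a direct double inclusion; the only nonroutine ingredient is that for the strictly positive continuous weight $w_q$ (with $0<w_q\le 1$) and any $\mu\in\mathcal{M}(Z)$ one has $\abs{w_q\cdot\mu}=w_q\cdot\abs{\mu}$, which follows from the polar decomposition $\mu=h\abs{\mu}$ with $\abs{h}\equiv 1$: then $w_q\cdot\mu=(w_q h)\abs{\mu}$ and $\abs{w_q h}=w_q$. I would use throughout that $z\mapsto d(z,e)$ is $1$-Lipschitz, hence Borel, so that both $w_q$ and $w_q^{-1}=1+d(\cdot,e)^q$ are Borel, and that elements of $\mathcal{M}_q(Z)$ are in particular bounded Radon measures.

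For the inclusion $w_q\cdot\mathcal{M}(Z)\subseteq\mathcal{M}_q(Z)$, take $\nu=w_q\cdot\mu$ with $\mu\in\mathcal{M}(Z)$; since $w_q\le 1$ the measure $\nu$ is bounded, and the total-variation identity gives
\[
\int_Z d(z,e)^q\,\d\abs{\nu}(z)\;\le\;\int_Z\bigl(1+d(z,e)^q\bigr)\,w_q(z)\,\d\abs{\mu}(z)\;=\;\abs{\mu}(Z)\;<\;\infty,
\]
so $\nu\in\mathcal{M}_q(Z)$. For the converse $\mathcal{M}_q(Z)\subseteq w_q\cdot\mathcal{M}(Z)$, given $\nu\in\mathcal{M}_q(Z)$ I would set $\mu:=w_q^{-1}\cdot\nu=(1+d(\cdot,e)^q)\cdot\nu$ and compute
\[
\abs{\mu}(Z)\;=\;\int_Z\bigl(1+d(z,e)^q\bigr)\,\d\abs{\nu}(z)\;=\;\abs{\nu}(Z)+\int_Z d(z,e)^q\,\d\abs{\nu}(z)\;<\;\infty,
\]
which is finite precisely because $\nu$ is a bounded measure with finite $q$-moment; hence $\mu\in\mathcal{M}(Z)$, and strict positivity of $w_q$ gives $w_q\cdot\mu=w_q\cdot w_q^{-1}\cdot\nu=\nu$, so $\nu\in w_q\cdot\mathcal{M}(Z)$. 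Combining the two inclusions yields \eqref{eq:momentsweights}.

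I do not expect a genuine obstacle here: the statement is essentially the translation of the defining condition $\int_Z d(z,e)^q\,\d\abs{\nu}<\infty$ into the factorization language of \cite{summers1970dual}. The only place that deserves a moment of care is the identity $\abs{g\cdot\mu}=g\cdot\abs{\mu}$ for positive Borel $g$ — applied both with $g=w_q$ and with $g=w_q^{-1}$, where one should not presuppose that $w_q^{-1}\cdot\nu$ is a bounded measure, as its boundedness is exactly what the second computation above certifies. Once this is in place, the whole argument reduces to evaluating integrals of nonnegative integrands.
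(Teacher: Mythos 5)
Your proof is correct, but for the nontrivial inclusion $\mathcal{M}_q(Z)\subseteq w_q\cdot\mathcal{M}(Z)$ it takes a genuinely different and more elementary route than the paper. The paper first normalizes $\nu$ so that $\int_Z w_q^{-1}\,\d\abs{\nu}\le 1$, observes that the positive part $\nu^+$ then defines a positive functional of norm at most $1$ on $C_{w_q}(Z)$, and invokes the representation result \cite[Lemma~3.3]{summers1970dual} to produce a bounded $\mu^+$ with $\nu^+=w_q\cdot\mu^+$ (and likewise for $\nu^-$); this reuses the duality machinery already needed for $(C_w(Z))^*\cong w\cdot\mathcal{M}(Z)$ but is somewhat indirect. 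You instead simply set $\mu:=w_q^{-1}\cdot\nu$, use the polar decomposition to get $\abs{g\cdot\mu}=\abs{g}\cdot\abs{\mu}$ for a Borel density $g$, and observe that $\abs{\mu}(Z)=\int_Z(1+d(z,e)^q)\,\d\abs{\nu}(z)$ is finite precisely because $\nu$ has a finite $q$-moment, after which $w_q\cdot\mu=\nu$ is immediate from $w_q\,w_q^{-1}\equiv 1$ (two bounded Radon measures agreeing against all of $C_c(Z)$ coincide). Your version is self-contained and avoids the external lemma and the separate treatment of $\nu^{\pm}$; the paper's version has the minor advantage of staying entirely within the functional-analytic framework of \cite{summers1970dual} that the surrounding section is built on, and of tracking the norm bound $\norm{\mu^{\pm}}_{\TV}\le 1$ explicitly. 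Both arguments agree on the easy inclusion. Your flagged point of care, namely not presupposing boundedness of $w_q^{-1}\cdot\nu$ before certifying it via the moment condition, is exactly the right one.
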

\begin{proof}
    Clearly, $w_q\cdot\mathcal{M}(Z)\subseteq \mathcal{M}_q(Z)$. Indeed, for any $\mu\in\mathcal{M}(Z)$
    \begin{align*}
        \int_Z d(z,e)^q {\rm d}|w_q\cdot\mu|=\int_Z w_q(z)d(z,e)^q{\rm d}|\mu|=
        \int_Z\frac{d(z,e)^q}{1+d(z,e)^q}{\rm d}|\mu|
        \leq C_q\|\mu\|_{\TV},
    \end{align*}
    with $C_q=\max\{w_q(z)d(z,e)^q:z\in Z\}$. To show the opposite inclusion $\mathcal{M}_q(Z)\subseteq w_q\cdot\mathcal{M}(Z)$, let $\nu\in\mathcal{M}_q(Z)$. Without loss of generality, we can assume that 
    $$\int_Zw_q(z)^{-1}{\rm d}|\nu|(z)=\int_Z(1+d(z,e)^q){\rm d}|\nu|(z)\leq 1.$$
Then, for every $f\in C_c^+(Z)$ with $\|f\|_{C_{w_q}}\leq 1$
\begin{align*}
\int_Z f(z){\rm d}\nu^+(z)=\int_Z \frac{f(z)}{w_q(z)}w_q(z){\rm d}\nu^+(z)\leq\|f\|_{C_{w_q}}\int_Z w_q(z)^{-1}{\rm d}\nu^+(z)\leq1.
\end{align*}
Then, by Lemma~3.3 in \cite{summers1970dual}, there exists a non-negative measure $\mu^+\in\mathcal{M}(Z)$ with $\|\mu^+\|_{\rm TV}\leq 1$ such that $\nu^+=w_q\cdot\mu^+$. Analogously, there exists $\mu^-\in\mathcal{M}(Z)$ with $\|\mu^-\|_{\rm TV}\leq 1$ such that $\nu^-=w_q\cdot\mu^-$. Therefore, $\nu=\nu^++\nu^-=w_q\cdot(\mu^++\mu^-)$, which shows that $\nu\in w_q\cdot\mathcal{M}(Z)$.
\end{proof}
Hence, the topological dual of $C_{w_q}(Z)$ coincides with $\mathcal{M}_q(Z)$ endowed with the norm
\[
\|\nu\|_{\mathcal{M}_q}:=\|\nu\|_{w_q,*}=\int_Z(1+d(z,e)^q){\rm d}|\nu|(z),
%
\]
which is precisely the regularizer studied in the previous sections.

\begin{remark}
    For $q>1$, the weak$^*$ convergence we considered in $\mathcal{M}_q(Z)$ (let us call this topology $\tau_q^*$) implies the $\KRu$ convergence. For any $p>1$, let \[B^p :=\left\{\mu\in\mathcal{M}(Z):\int_Z(1+d(z,e)^p){\rm d}|\mu|(z)\leq1\right\}.\] We have shown in Theorem~\ref{thm:compact-embedding-locally-compact} that this set is compact with respect to the $\KRu$ topology and thus, in this set, the $\KRu$ topology coincides with its correspondent weak topology, i.e., the weakest topology that makes continuous the pairing against Lipschitz functions (let us call this topology $\tau_L$). To see this, just notice that the identity map is strong to weak continuous, a continuous bijection from a compact space to a Hausdorff one must be a homeomorphism \cite[Thm.~IX.2.1]{dugundji}, and the weak topology in $\KRu(Z)$ is Hausdorff by applying the Hahn-Banach separation theorem on its predual $\Lip(Z)$. Let $\{\mu_n\}_n$ be a sequence converging to some $\mu$ w.r.t. the weak$^*$ topology in $\mathcal{M}_q(Z)$ for some $q>1$. Then, $\|\mu_n\|_{\mathcal{M}_q(Z)}\leq c$, for some $c > 0$, and for every $p$ such that $1<p<q$ we have 
    \begin{align*}
        \int (1+d(z,e)^p) \, {\rm d} |\mu_n| & = _{C_{w_q}}\langle 1+d(\cdot,e)^p, |\mu_n| \rangle_{\mathcal{M}_q}  
        \leq  \|1+d(\cdot, e)^p\|_{C_{w_q}(Z)}\|\mu_n\|_{\mathcal{M}_q} \\
        & =  \|1+d(\cdot, e)^p\|_{C_{w_q}}\|\mu_n\|_{\mathcal{M}_q(Z)}\leq c \|1+d(\cdot, e)^p\|_{C_{w_q}}.
    \end{align*}
     This means that $\{\mu_n\}_n\subset \gamma B^p$ for some $\gamma  > 0$. From Lemma~\ref{lem:lsc_qmoments} also $\mu\in \gamma B^p$. Since $\KRu$ coincides with $\tau_L$ in $B^p$ (and  also in $\gamma B^p$) and convergence of sequences in $(\mathcal{M}_q,\tau_q^*)$ implies convergence  w.r.t. the $\tau_L$ topology (since $q > 1$), we conclude. 
    \end{remark}
    \begin{remark}
It is also possible to show that the topology $\KRu$ coincides with $\tau_p^*$ in $B^p$, when $p > 1$. In fact, $B^p$ is the ball in $\mathcal{M}_p(Z)$ and thus, it is compact w.r.t. its weak$^*$ topology $\tau_p^*$. Since $p > 1$, the identity map $I:(B^p,\tau_p^*)\to (B^p,\tau_L)$ is continuous. We notice that $(B^p,\tau_L)$ is Hausdorff by Hahn-Banach separation theorem and we conclude again with recalling that a continuous bijection from a compact space to a Hausdorff one must be a homeomorphism \cite[Thm.~IX.2.1]{dugundji}. 
\end{remark}

\subsection{Connection to KRu- and TV-regularized empirical risk minimization}

Also in this setting, for a set of weights $\Theta \subset \R^{d+1}$, we can consider the function spaces defined by the dual pairings  
\[
\scal{\sigma(\langle\cdot,x\rangle)}{\nu}{C_w(\Theta)}{C_w(\Theta)^*}=\scal{\sigma(\langle\cdot,x\rangle)}{\nu}{C_w(\Theta)}{w\cdot\mathcal{M}(\Theta)},
\]
as well as the associated ERM problems. Let $w\colon \Theta\to(0,+\infty)$ be a positive continuous function on $\Theta$. The space of functions 
\begin{align*}
\mathcal{B}^{w}_\sigma=\{f_\nu:\nu\in w\cdot\mathcal{M}(\Theta)\}, \quad \text{with} \quad f_\nu(x)=\scal{\nu}{\sigma(\langle\cdot,x\rangle)}{w\cdot\mathcal{M}(\Theta)}{C_w(\Theta)},
\end{align*}
is a reproducing kernel Banach space if endowed with the norm
\[
\|f\|_{\mathcal{B}^{w}_\sigma}=\inf\{\|\nu\|_{w,*}:f=f_\nu\},\quad \|\nu\|_{w,*}=\|w^{-1}\cdot\nu\|_{\rm TV},
\]
and the associated ERM problem reads as follows
\begin{align*}
    \inf_{f \in \mathcal{B}^{w}_\sigma} \frac{1}{N} \sum_{i=1}^N L(y_i, f(x_i)) + \|f\|_{\mathcal{B}^{w}_\sigma}.
\end{align*}
Since the space $\mathcal{B}^{w}_\sigma$ is parametrized by the measure space $w\cdot\mathcal{M}(\Theta)$, the above learning task can be reformulated as a minimization problem over $w\cdot\mathcal{M}(\Theta)$
\begin{align}\label{eq:risk_w}
    \inf_{\nu \in w\cdot\mathcal{M}(\Theta)} \frac{1}{N} \sum_{i=1}^N L(y_i, f_\nu(x_i)) + \|\nu\|_{w,*}.
\end{align}
We can rephrase \eqref{eq:risk_w} using a more general setting and a $\KRu$ regularization may also be included. We write
\begin{align}\label{eq:risk_w_KR}
    \inf_{\nu \in w\cdot\mathcal{M}(Z)} F(\mathcal{A}\nu) + G^w_{\alpha,\beta}(\nu),
\end{align}
where $(Z,d)$ is a complete, separable, locally compact, geodesic and pointed metric space with base point $e \in Z$, the operator $ \mathcal{A} : w\cdot\mathcal{M}(Z) \to H$ is linear and continuous with values in the Hilbert space $H$, the functional
$ F : H \to (-\infty,+\infty] $
is proper, convex, coercive and lower semicontinuous and $G_{\alpha,\beta}^w \colon w\cdot \mathcal{M}(Z) \rightarrow [0,\infty]$ is given by
\begin{equation}
     G^w_{\alpha,\beta}(\nu) \defeq \alpha\|\nu\|_{\KRu}+\beta\|\nu\|_{w,*}.\end{equation}
In this setting we can directly establish existence of minimizers for problem~\eqref{eq:risk_w_KR}. In fact, $F\circ \mathcal{A} + G^w_{\alpha,\beta}$ is lower semicontinuous with respect to the weak$^*$ topology and, by the Banach-Alaoglu theorem, its sublevel sets are weak$^*$ compact.

For the choice of weight $w_q=(1+d(\cdot,e)^q)^{-1}$ with $q\in(0,+\infty)$, the norm reads
\[
\|f\|_{\mathcal{B}^{q}_\sigma}=\inf\{\|\nu\|_{{\mathcal{M}}_q}:f=f_\nu\},\quad\|\nu\|_{{\mathcal{M}}_q}=\int_\Theta (1+d(\theta,e)^q){\rm d}|\nu|(\theta),
\]
which corresponds to the regularizer used in  \cref{sec:regprobs}. As a consequence, by Lemma~\ref{lem:identification}, the minimization problem \eqref{eq:risk_w_KR} coincides with \eqref{eq:opt-prob} and we can directly establish existence of the minimizers when $p=q>1$, proving again Theorem~\ref{thm:existence_of_minimizers}. Note that whenever $p=q\leq 1$, we can still prove existence of minimizers in case $\alpha =0$. This setting, however, does not reflect the additional optimal transport effects induced by $\KRu$ compactness, which will be made explicit in the analysis of large data limits in Section~\ref{sec:samplinglimit}. 

Finally, we observe that this sheds light to an interesting connection with the paper \cite{bartolucci2023understanding}. In fact, the optimization problem \eqref{eq:risk_w} is equivalent to the TV-regularized ERM problem considered in \cite{bartolucci2023understanding}. Indeed, we can rewrite \eqref{eq:risk_w}
as
\begin{align}\label{eq:variational_problem_TV}
        \inf_{\nu \in w\cdot\mathcal{M}(\Theta)} \frac{1}{N} \sum_{i=1}^N L(y_i, f_\nu(x_i)) + \|\nu\|_{w,*} &= 
            \inf_{\mu \in\mathcal{M}(\Theta)} \frac{1}{N} \sum_{i=1}^N L(y_i, f_{w\cdot\mu}(x_i)) + \|\mu\|_{\TV},
\end{align}
where 
\begin{align}\label{eq:pairingg}
f_{w\cdot\mu}(x)=\scal{w\cdot\mu}{\sigma(\langle\cdot,x\rangle)}{w\cdot\mathcal{M}(\Theta)}{C_w(\Theta)}=\scal{\mu}{w^{-1}(\cdot)\sigma(\langle\cdot,x\rangle)}{\mathcal{M}(\Theta)}{C_0(\Theta)},
\end{align}
can be interpreted as a pairing between $C_0(\Theta)$ and $\M(\Theta)$, thus reducing to the framework of~\cite{bartolucci2023understanding}. This new perspective clarifies the role of the smoothing function $w^{-1}$, which was already introduced and recognized as technically crucial in \cite{bartolucci2023understanding}. Here, it is further proven to be necessary for ensuring the existence of non-trivial minimizers for the variational problem \eqref{eq:variational_problem_TV}.
 
\section{Representer theorems}\label{sec:representer}
In this Section~we aim at showing the validity of representer theorems for the abstract minimization problem \eqref{eq:opt-prob} and, as a consequence, for the $\KRu$-regularized risk minimization problem in Definition \ref{def:KRrisk}. To this end, as pointed out in \cite{bredies2020sparsity, boyer2019representer}, the first necessary step consists in characterizing the extremal points of the ball of the regularizer $G_{\alpha,\beta}$.

\subsection{Extremal points of the unit ball of \texorpdfstring{$G_{\alpha,\beta}$}{Ga,b}}

In what follows we aim at characterizing the extremal points of the unit ball
\begin{align*}
B_{\alpha,\beta} = \{\mu \in \mathcal{M}_p(Z) : G_{\alpha,\beta}(\mu) \leq 1\}
\end{align*}
for the case $\alpha >0$. If $\alpha = 0$ it is well-known that extremal points are Dirac deltas. We will discuss this case in Section~\ref{sec:classicalrepresenter}. 
Preliminarily, note that we can write $G_{\alpha,\beta}$ in terms of the balanced version of the KR norm (see Section~\ref{sec:opttrans}), that is 
\begin{align*}
G_{\alpha,\beta}(\mu)  = \alpha\|\mu - \mu(Z) \delta_e\|_{\rm KR} + \alpha|\mu(Z)| + \beta\int (1+ d(z,e)^p) \d|\mu|.
\end{align*}
To characterize  extremal points of $B_{\alpha,\beta}$ we will use an auxiliary functional in $\mathcal{M}_p^0(Z) \times \R$ (where $\mathcal{M}_p^0(Z)$ is the subspace of balanced measures in $\M_p(Z)$), defined as
\begin{align*}
\mathcal{G}_{\alpha,\beta}(\nu,c) = \alpha\|\nu\|_{\rm KR} + \alpha|c| +  \beta\int (1+ d(z,e)^p) \d|\nu + c\delta_e|.
\end{align*}
We also consider the following map
\begin{align}\label{eq:T}
S : \mathcal{M}_p(Z) \rightarrow \mathcal{M}_p^0(Z) \times \R, \qquad S(\mu) = (\mu - \mu(Z)\delta_e, \mu(Z)).
\end{align}
\begin{lemma}
The map $S$ defined in \eqref{eq:T} is bijective with inverse $S^{-1}(\nu,c) = \nu + c \delta_e$. Moreover, it is narrowly continuous and
\begin{align}\label{eq:equ}
G_{\alpha,\beta}(\mu) = \mathcal{G}_{\alpha,\beta}(S(\mu)) .
\end{align}
\end{lemma}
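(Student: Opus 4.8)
The plan is to verify the three assertions—bijectivity with the stated inverse, narrow continuity, and the identity \eqref{eq:equ}—by direct computation, each reducing to elementary manipulations of the canonical decomposition $\mu = \big(\mu - \mu(Z)\delta_e\big) + \mu(Z)\delta_e$ already exploited in the proof of \cref{thm:dualityKR}. First I would check that $S$ is well defined as a map into $\mathcal{M}_p^0(Z)\times\R$: the measure $\mu - \mu(Z)\delta_e$ has total mass $\mu(Z)-\mu(Z)=0$, and since $\delta_e$ has vanishing $p$-moment it lies in $\mathcal{M}_p^0(Z)$ whenever $\mu\in\mathcal{M}_p(Z)$. Injectivity is then immediate, since $S(\mu)=S(\mu')$ forces $\mu(Z)=\mu'(Z)$ and hence $\mu=\mu'$; for surjectivity I would take $(\nu,c)\in\mathcal{M}_p^0(Z)\times\R$, set $\mu\defeq\nu+c\delta_e\in\mathcal{M}_p(Z)$, and compute $\mu(Z)=\nu(Z)+c=c$ and $\mu-\mu(Z)\delta_e=\nu$, so that $S(\mu)=(\nu,c)$ and the inverse is exactly $(\nu,c)\mapsto\nu+c\delta_e$.

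For narrow continuity I would take a sequence $\mu_n\weakto\mu$ narrowly in $\mathcal{M}_p(Z)$. Testing against the constant function $1\in C_b(Z)$ yields $\mu_n(Z)\to\mu(Z)$, which gives convergence of the second component of $S(\mu_n)$. For the first component, for any $\varphi\in C_b(Z)$ one has $\int_Z\varphi\,\d(\mu_n-\mu_n(Z)\delta_e)=\int_Z\varphi\,\d\mu_n-\mu_n(Z)\varphi(e)$, and both terms on the right converge, so $\mu_n-\mu_n(Z)\delta_e\weakto\mu-\mu(Z)\delta_e$ narrowly. The same computation applied verbatim to $S^{-1}(\nu,c)=\nu+c\delta_e$ shows the inverse is narrowly continuous as well, so $S$ is in fact a homeomorphism for the narrow topologies.

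Finally, the identity \eqref{eq:equ} is just a substitution in the expressions for $G_{\alpha,\beta}$ and $\mathcal{G}_{\alpha,\beta}$ recalled just before the lemma: writing $(\nu,c)=S(\mu)$ one has $\nu+c\delta_e=\mu$ by the first paragraph, whence $\|\nu\|_{\KR}=\|\mu-\mu(Z)\delta_e\|_{\KR}$, $|c|=|\mu(Z)|$, and $\int_Z(1+d(z,e)^p)\,\d|\nu+c\delta_e|=\int_Z(1+d(z,e)^p)\,\d|\mu|$; summing with the weights $\alpha,\alpha,\beta$ gives $\mathcal{G}_{\alpha,\beta}(S(\mu))=G_{\alpha,\beta}(\mu)$. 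I do not expect a genuine obstacle here; the only point meriting a line of care is recognizing that $S$ and $S^{-1}$ share the same ``translate by a multiple of $\delta_e$'' structure, so the narrow continuity of the inverse needs no separate argument.
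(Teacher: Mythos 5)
Your proposal is correct and follows essentially the same route as the paper: decompose via $\mu=(\mu-\mu(Z)\delta_e)+\mu(Z)\delta_e$, check bijectivity directly, obtain narrow continuity from testing against $\1\in C_b(Z)$ (i.e.\ narrow continuity of $\mu\mapsto\mu(Z)$), and verify \eqref{eq:equ} by substitution. If anything, your injectivity argument and the explicit check of \eqref{eq:equ} are slightly more streamlined than the paper's (which argues injectivity by restricting measures to $Z\setminus\{e\}$ and $\{e\}$ and leaves \eqref{eq:equ} as "readily verified").
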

\begin{proof}
First, note that $S$ is surjective. Indeed, given $(\nu ,c) \in \mathcal{M}_p^0(Z) \times \R$ and defining $\mu = \nu + c\delta_e$,  since $\mu(Z) = c$, it holds that 
$S(\mu) = (\nu,c)$. Moreover, $S$ is also injective. Indeed, the equalities
\begin{align*}
\nu_1 + \nu_1(Z) \delta_e = \nu_2 + \nu_2(Z) \delta_e \quad \text{and} \quad \nu_1(Z) = \nu_2(Z) 
\end{align*}
imply immediately that $\nu_1 \mres (Z\setminus \{e\}) = \nu_2 \mres (Z\setminus \{e\})$. Additionally, since $\nu_1(Z) = \nu_2(Z)$ and $\nu_1 + \nu_1(Z) \delta_e = \nu_2 + \nu_2(Z) \delta_e$, it follows that $\nu_1 \mres \{e\} = \nu_2 \mres \{e\}$, implying that $\nu_1 = \nu_2$. From the bijectivity of $T$ it follows also that the inverse can be characterized as $S^{-1}(\nu,c) = \nu + c\delta_e$. Finally, narrow continuity is a consequence of the narrow continuity of the map $\mu \mapsto \mu(Z)$ and \eqref{eq:equ} can be readily verified.
\end{proof}
To characterize the extremal points of the unit ball of $\mathcal{G}_{\alpha,\beta}$, we will adapt an argument from the infimal convolution case treated in \cite{CarIglWal24p} (see \cite{aliaga2024convex} for related results).
\begin{lemma}\label{lem:extaux}
The extremal points of 
\begin{align*}
 \mathcal{B}_{\alpha,\beta} = \{(\nu,c) \in \mathcal{M}_p^0(Z) \times \R : \mathcal{G}_{\alpha,\beta}(\nu,c) \leq 1\}
\end{align*}
are necessarily of the form $(a\delta_x - a \delta_y, c)$ where $x,y \in Z$, $a > 0$ and $c\in \R$.
\end{lemma}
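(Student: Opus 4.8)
The plan is to adapt the argument from the infimal-convolution setting of \cite{CarIglWal24p}. First, since $\beta>0$ and $\nu$ is balanced, one checks that $\mathcal{G}_{\alpha,\beta}$ is a genuine norm on $\mathcal{M}_p^0(Z)\times\R$ (it is finite, positively $1$-homogeneous, subadditive, and vanishes only at $(0,0)$), so every extremal point $(\nu,c)$ of $\mathcal{B}_{\alpha,\beta}$ must lie on the sphere $\{\mathcal{G}_{\alpha,\beta}=1\}$. The mechanism I would exploit is the elementary \emph{splitting principle}: if $(\nu,c)=(\nu_1,c_1)+(\nu_2,c_2)$ with both summands nonzero and
\[
\mathcal{G}_{\alpha,\beta}(\nu,c)=\mathcal{G}_{\alpha,\beta}(\nu_1,c_1)+\mathcal{G}_{\alpha,\beta}(\nu_2,c_2),
\]
then $(\nu,c)$ is a convex combination, with positive weights $\mathcal{G}_{\alpha,\beta}(\nu_i,c_i)$ summing to $1$, of the two points $(\nu_i,c_i)/\mathcal{G}_{\alpha,\beta}(\nu_i,c_i)\in\mathcal{B}_{\alpha,\beta}$, so extremality forces $(\nu_i,c_i)=\mathcal{G}_{\alpha,\beta}(\nu_i,c_i)(\nu,c)$; in particular $\nu_i$ is a positive multiple of $\nu$. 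It therefore suffices to produce, whenever $\nu$ is \emph{not} a dipole $a\delta_x-a\delta_y$, such a nontrivial additive splitting in which $\nu_1$ is not proportional to $\nu$.

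So suppose $(\nu,c)$ is extremal but $\nu$ is not a dipole; since $\nu$ is balanced this forces $\supp\nu_+$ or $\supp\nu_-$ to contain at least two points, and replacing $(\nu,c)$ by $(-\nu,-c)$ if needed (which leaves $\mathcal{G}_{\alpha,\beta}$ and extremality invariant) we may assume $\supp\nu_+$ has at least two points; pick a Borel set $B$ with $\nu_+\mres B\ne0\ne\nu_+\mres B^c$. Let $\gamma^*\in\Pi(\nu_+,\nu_-)$ be optimal for $W_1(\nu_+,\nu_-)=\|\nu\|_{\rm KR}$ (which exists by tightness and lower semicontinuity of the cost), set $\gamma_1^*=\gamma^*\mres(B\times Z)$, $\gamma_2^*=\gamma^*\mres(B^c\times Z)$, and $\nu_i=(\pi_1)_\#\gamma_i^*-(\pi_2)_\#\gamma_i^*$. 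Then $\nu_1+\nu_2=\nu$, each $\nu_i\in\mathcal{M}_p^0(Z)$ (as $|\nu_i|\le|\nu|$), and since $(\pi_1)_\#\gamma_i^*\le\nu_+$ and $(\pi_2)_\#\gamma_i^*\le\nu_-$ are restrictions of mutually singular measures they are exactly the Jordan parts of $\nu_i$, whence $|\nu_1|+|\nu_2|=\nu_++\nu_-=|\nu|$ as positive measures; this makes the moment term additive once $c$ is split. For the KR term, the triangle inequality gives $\|\nu\|_{\rm KR}\le\|\nu_1\|_{\rm KR}+\|\nu_2\|_{\rm KR}$, while $\gamma_i^*\in\Pi((\nu_i)_+,(\nu_i)_-)$ yields $\|\nu_i\|_{\rm KR}\le\int d(x,y)\d\gamma_i^*$, and summing gives the reverse inequality, hence equality.

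It remains to split $c=c_1+c_2$ compatibly, that is with $|c|=|c_1|+|c_2|$ and $|(\nu+c\delta_e)(\{e\})|=\sum_i|(\nu_i+c_i\delta_e)(\{e\})|$ (recall $d(e,e)=0$, so the weight at $e$ equals $1$); together with the previous step this gives $\mathcal{G}_{\alpha,\beta}(\nu,c)=\mathcal{G}_{\alpha,\beta}(\nu_1,c_1)+\mathcal{G}_{\alpha,\beta}(\nu_2,c_2)$. Writing $n=\nu(\{e\})$ and $n_i=\nu_i(\{e\})$, the identity $|\nu|=|\nu_1|+|\nu_2|$ evaluated at $\{e\}$ gives $|n|=|n_1|+|n_2|$, so $n_1,n_2$ have the same sign as $n$, and a short case distinction on the signs of $c$ and $n+c$ then shows such a split always exists. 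Since both summands are nonzero (one checks $|\nu_i|(Z\setminus\{e\})>0$, hence $\mathcal{G}_{\alpha,\beta}(\nu_i,c_i)>0$), the splitting principle applies and $\nu_1=\lambda_1\nu$ with $\lambda_1\in(0,1)$. Taking positive parts, $\nu_+\mres B=(\nu_1)_+=\lambda_1\nu_+$ and likewise $\nu_+\mres B^c=(1-\lambda_1)\nu_+$; as restrictions to disjoint sets these are mutually singular, so $\lambda_1\nu_+\perp(1-\lambda_1)\nu_+$ forces $\nu_+=0$, contradicting $\nu_+\mres B\ne0$. Hence $\nu=a\delta_x-a\delta_y$, and since $c\in\R$ is unconstrained this is exactly the asserted form.

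The main obstacle is the special role of the base point $e$: because $\mathcal{G}_{\alpha,\beta}$ singles out $e$ through the terms $\alpha|c|$ and $\int(1+d(\cdot,e)^p)\,\d|\nu+c\delta_e|$, the transport-based decomposition of $\nu$ must be accompanied by a matching decomposition of the scalar $c$, and the reason this is always possible rests precisely on the no-cancellation-at-$e$ identity $|n|=|n_1|+|n_2|$ obtained above. Everything else is the by now standard interplay between extremality, subadditivity, and the stability of optimal transport plans under restriction to $B\times Z$.
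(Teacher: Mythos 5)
Your proof is correct, and it rests on the same core mechanism as the paper's: restrict an optimal plan $\gamma^*$ for $W_1(\nu^+,\nu^-)=\|\nu\|_{\KR}$ to the two slabs $B\times Z$ and $B^c\times Z$, and use the resulting two-piece decomposition of $(\nu,c)$ to contradict extremality.

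The execution differs in ways worth recording. The paper extracts the convex combination by rescaling the two pieces by $\tfrac{C_E+C_F}{C_E}$ and $\tfrac{C_E+C_F}{C_F}$ and checking by inequalities that each rescaled piece lies in $\mathcal{B}_{\alpha,\beta}$, assigning all of $|c|$ to a single piece; you instead prove \emph{exact} additivity of $\mathcal{G}_{\alpha,\beta}$ along the splitting and invoke the abstract splitting principle for unit balls of norms. This is what forces your extra work at the base point: the sign-compatible split $c=c_1+c_2$ and the no-cancellation identity $|n|=|n_1|+|n_2|$ (which indeed holds because $\nu^+\perp\nu^-$ means at most one of them charges $\{e\}$, so $|\nu|(\{e\})=|\nu(\{e\})|$), and the case distinction you allude to does go through in all sign configurations. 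A second difference is the choice of partition: you split $\nu^+$ into two pieces of positive mass (after the symmetry reduction $(\nu,c)\mapsto(-\nu,-c)$), which is precisely where the non-dipole hypothesis enters and which guarantees both summands are nonzero; the paper's stated condition ($\nu^+(E)>0$ and $\nu^-(F)>0$) is satisfied by every nonzero balanced measure, so your version makes the nondegeneracy of the decomposition explicit where the paper leaves it implicit. Finally, your endgame (mutual singularity of $\lambda_1\nu^+$ and $(1-\lambda_1)\nu^+$ forcing $\nu^+=0$) is a clean way to close the argument that the paper does not spell out. In short: same decomposition, but exact additivity plus the splitting principle in place of rescale-and-verify, with a more careful treatment of the scalar component and of why the split is nontrivial.
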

\begin{proof}
Consider $\nu \in \mathcal{M}^0_p(Z)$ such that $(\nu,c) \in {\rm Ext}(\mathcal{B}_{\alpha,\beta})$. 
Suppose that $\nu \neq 0$ is not a dipole. Since $\nu$ cannot be written as a dipole, there exist two disjoint measurable sets $E, F \subset Z$ such that $E \cup F = Z$ and $\nu^+(E)>0$ and $\nu^-(F)>0$. 
Denote by $\gamma_{opt}$ the optimal transport plan between $\nu^+$ and $\nu^-$.  Define now the following measures
\begin{align}
\nu_E = (\pi_2)_\# [\gamma_{opt} \mres (\pi_1^{-1}(E))], \qquad 
\nu_F = (\pi_2)_\# [\gamma_{opt} \mres (\pi_1^{-1}(F))].
\end{align}
Note that $\nu_E(Z) = (\nu^+ \mres  E)(Z)$ and $\nu_F(Z) = (\nu^+  \mres F)(Z)$. This follows from
\begin{align*}
\nu_E(Z) = (\pi_2)_\# [\gamma_{opt} \mres (\pi_1^{-1}(E))](Z) = \gamma_{opt} (\pi_1^{-1}(E)) = (\nu^+ \mres E)(Z)
\end{align*}
and a similar computation for $\nu_F$. Moreover, 
\begin{align}\label{eq:marginal}
\gamma_{opt}\mres (\pi_1^{-1}(E)) \in \Pi(\nu^+\mres E, \nu_E),\quad \ 
\gamma_{opt} \mres (\pi_1^{-1}(F)) \in \Pi(\nu^+ \mres  F, \nu_F).
\end{align}
Indeed for every measurable set $A \subset Z$ it holds that 
\begin{align*}
\gamma_{opt} \mres (\pi_1^{-1}(E))(A \times Z) = \gamma_{opt}((E \cap A) \times Z) = \nu^+\mres(E \cap A) = (\nu^+\mres E)(A) \\
\gamma_{opt}\mres (\pi_1^{-1}(E))(Z \times A) = \gamma_{opt} \mres (E\times A) = [\gamma_{opt} \mres (\pi^{-1}(E))](Z\times A) = \nu_E(A).
\end{align*}
Therefore, due to \eqref{eq:marginal}, \eqref{eq:otformula} and the definition of $\gamma_{opt}$ it holds
\begin{align*}
W_1(\nu^+ \mres E, \nu_E) + W_1(\nu^+ \mres  F, \nu_F) & \leq \int_{\pi_1^{-1}(E)} d(x,y) \d\gamma_{opt} + \int_{\pi_1^{-1}(F)} d(x,y) \d\gamma_{opt} \\
& = W_1(\nu^+, \nu^-) = \|\nu\|_{\rm KR}.
\end{align*}  
Define  the following constants
\begin{align*}
C_E =& W_1(\nu^+ \mres E, \nu_E) + |c| + \int_E (1+ d(z,e)^p) \d|\nu + c\delta_e|, \\
C_F =& W_1(\nu^+ \mres F, \nu_F) + \int_F (1+ d(z,e)^p) \d|\nu + c\delta_e|.
\end{align*}
Note that since $G_{\alpha,\beta}(\nu) \leq 1$, it holds that
\begin{align*}
C_E + C_F \leq \|\nu\|_{\rm KR} + c + \int_Z (1+ d(z,e)^p) \d|\nu + c\delta_e|  \leq 1,
\end{align*}
implying that $C_E, C_F \in (0,1)$. Now define the following measures
\begin{align*}
\nu_{1+} & = \frac{C_E + C_F}{C_E} \nu^+ \mres E, \quad \nu_{1-} = \frac{C_E + C_F}{C_E} \nu_E, \\
\nu_{2+} & = \frac{C_E + C_F}{C_F} \nu^+ \mres F, \quad \nu_{2-} = \frac{C_E + C_F}{C_F} \nu_F,
\end{align*}
and let $c_1 = c\frac{C_E + C_F}{C_E}$. First note that $\frac{C_E}{C_E + C_F} \nu_{1+} + \frac{C_F}{C_E + C_F}\nu_{2+} = \nu^+$ and $\frac{C_E}{C_E + C_F}\nu_{1-} + \frac{C_F}{C_E + C_F}\nu_{2-} = \nu^-$. While the first identity is clear, for the second one we can note that for every measurable set $A \subset Z$ it holds that
\begin{align*}
\frac{C_E}{C_E + C_F}(\nu_{1-} + \nu_{2-})(A) & = \nu_E(A) + \nu_F(A) = (\gamma_{opt}(E \times A) + \gamma_{opt}(F \times A)) \\
& = (\gamma_{opt}(Z \times A)) =  \nu^-(A).
\end{align*}
Therefore
\begin{align*}
(\nu,c) = \frac{C_E}{C_E + C_F} ((\nu_{1+},c_1) - (\nu_{1-},0)) + \frac{C_F}{C_E + C_F} ((\nu_{2+},0) - (\nu_{2-},0)). 
\end{align*}
To achieve a contradiction it remains to verify that $\mathcal{G}_{\alpha,\beta}((\nu_{1+},c_1) - (\nu_{1-},0)) \leq 1$ and $\mathcal{G}_{\alpha,\beta}((\nu_{2+},0) - (\nu_{2-},0)) \leq 1$. We only verify the first inequality:
\begin{align*}
 &\mathcal{G}_{\alpha,\beta}((\nu_{1+},c_1) - (\nu_{1-},0)) \\ 
 = & \frac{C_E + C_F}{C_E} (W_1(\nu^+ \mres E,\nu_E) + |c| + \int_E (1+ d(z,e)^p) \d|\nu^+ \mres E - \nu_{E} + c\delta_e|)\\
 \leq & \frac{C_E + C_F}{C_E}(W_1(\nu^+ \mres E,\nu_E) + |c| + \int_E (1+ d(z,e)^p) \d|\nu + c\delta_e|) \leq 1.
\end{align*}
\end{proof}

We are finally ready to characterize the extremal points of $B_{\alpha,\beta}$.
\begin{theorem}\label{thm:extremal}
The extremal points of $B_{\alpha,\beta}$ are necessarily of the following form: 
\begin{align}\label{eq:dipole}
\mu = a\delta_x - a \delta_y + \mu(Z)\delta_e
\end{align}
where $x,y \in Z$ and $a > 0$ is a normalizing constant satisfying
\begin{align}\label{eq:valuea}
    1 = \alpha (a d(x,y) + |\mu(Z)|) + \beta \int_Z (1 + d(z,e)^p)\, \d|a\delta_x - a \delta_y + \mu(Z)\delta_e|(z).
\end{align}
\end{theorem}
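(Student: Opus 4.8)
The plan is to reduce \cref{thm:extremal} to \cref{lem:extaux} via the map $S$ of \eqref{eq:T}. First I would record that $S\colon\mathcal{M}_p(Z)\to\mathcal{M}_p^0(Z)\times\R$ is not merely bijective but \emph{linear}, and that by \eqref{eq:equ} it intertwines the two functionals, $G_{\alpha,\beta}=\mathcal{G}_{\alpha,\beta}\circ S$. Hence $\mu\in B_{\alpha,\beta}$ if and only if $S(\mu)\in\mathcal{B}_{\alpha,\beta}$, so $S$ restricts to a bijection of $B_{\alpha,\beta}$ onto $\mathcal{B}_{\alpha,\beta}$. Since a linear bijection of vector spaces maps extreme points of a convex set onto the extreme points of its image — both $S$ and $S^{-1}$ preserve nontrivial convex combinations and distinctness — we get $\Ext(B_{\alpha,\beta})=S^{-1}\!\big(\Ext(\mathcal{B}_{\alpha,\beta})\big)$. (Convexity of $B_{\alpha,\beta}$ is clear, as $G_{\alpha,\beta}$ is a sum of a seminorm and a weighted total variation, hence convex.)

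Next I would apply \cref{lem:extaux}: every element of $\Ext(\mathcal{B}_{\alpha,\beta})$ has the form $(a\delta_x-a\delta_y,c)$ with $x,y\in Z$, $a>0$, $c\in\R$. Applying $S^{-1}(\nu,c)=\nu+c\delta_e$ and noting that $\mu(Z)$ coincides with the second component of $S(\mu)$, one obtains that every $\mu\in\Ext(B_{\alpha,\beta})$ is of the form $\mu=a\delta_x-a\delta_y+\mu(Z)\delta_e$, which is exactly \eqref{eq:dipole}.

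It remains to establish the normalization \eqref{eq:valuea}. For this I would use that $G_{\alpha,\beta}$ is positively $1$-homogeneous: if $\mu\neq0$ lies in $B_{\alpha,\beta}$ with $G_{\alpha,\beta}(\mu)=r<1$, then $\mu=\tfrac12\big((1+t)\mu\big)+\tfrac12\big((1-t)\mu\big)$ for sufficiently small $t>0$ exhibits $\mu$ as a nontrivial convex combination of two elements of $B_{\alpha,\beta}$, while $\mu=0$ is evidently not extreme (it is the midpoint of $\pm\mu$ for any small admissible $\mu$). Thus every extreme point satisfies $G_{\alpha,\beta}(\mu)=1$. For $\mu$ as in \eqref{eq:dipole} we have $\mu-\mu(Z)\delta_e=a\delta_x-a\delta_y$, so by the splitting \eqref{eq:KRu-directsum} and the optimal transport identity \eqref{eq:otformula},
\[
\|\mu\|_{\KRu}=\|a\delta_x-a\delta_y\|_{\KR}+|\mu(Z)|=a\,d(x,y)+|\mu(Z)|;
\]
inserting this together with the moment term $\beta\int_Z(1+d(z,e)^p)\,\d|a\delta_x-a\delta_y+\mu(Z)\delta_e|$ into $G_{\alpha,\beta}(\mu)=1$ gives \eqref{eq:valuea}.

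Since the substantive combinatorial-geometric work is already contained in \cref{lem:extaux}, I do not expect a genuine obstacle here. The only points needing a little care are verifying that $S$ is linear (so that extreme points transfer in both directions rather than only one) and the homogeneity argument confining extreme points to the boundary $\{G_{\alpha,\beta}=1\}$, including the exclusion of the zero measure.
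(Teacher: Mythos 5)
Your proposal is correct and follows essentially the same route as the paper: transfer the problem to $\mathcal{B}_{\alpha,\beta}$ via the bijection $S$, invoke \cref{lem:extaux}, and pin down $a$ by the positive one-homogeneity of $G_{\alpha,\beta}$. The only cosmetic difference is that you justify the transfer of extreme points under the linear bijection $S$ by a direct argument, where the paper cites \cite[Lemma~3.2]{bredies2020sparsity} for the same fact.
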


\begin{proof}
First, observe that $B_{\alpha,\beta} = S^{-1}(\mathcal{B}_{\alpha,\beta})$.
Indeed, for $\mu$ such that $G_{\alpha,\beta}(\mu) \leq 1$, from \eqref{eq:equ} it holds that $\mathcal{G}_{\alpha,\beta}(S(\mu)) \leq 1$, that is $S(\mu) \in B_{\alpha,\beta}$ and thus $\mu \in S^{-1} (\mathcal{B}_{\alpha,\beta})$. Vice versa, if $\mu \in S^{-1}(\mathcal{B}_{\alpha,\beta})$, then there exists $(\nu,c)$ such that $\mathcal{G}_{\alpha,\beta}(\nu,c) \leq 1$ and $S(\mu) = (\nu,c)$. 
Therefore, from \eqref{eq:equ} we conclude that $G_{\alpha,\beta}(\mu) = \mathcal{G}_{\alpha,\beta}(S(\mu)) \leq 1$. From \cite[Lemma~3.2]{bredies2020sparsity} we have that ${\rm Ext} (B_{\alpha,\beta}) = S^{-1}{\rm Ext}(\mathcal{B}_{\alpha,\beta})$.
Therefore,~\eqref{eq:dipole} follows from Lemma~\ref{lem:extaux} and the definition of $S^{-1}$. To determine the value of $a$ we  note that, due to  one-homogeneity of $G_{\alpha,\beta}$, for every $\mu \in {\rm Ext} (B_{\alpha,\beta})$ one has  either $\mu = 0$ or $G_{\alpha,\beta}(\mu) = 1$. By plugging \eqref{eq:dipole} into the equation $G_{\alpha,\beta}(\mu) = 1$, we obtain~\eqref{eq:valuea}.
\end{proof}

\subsection{Representer theorems for \texorpdfstring{$\KRu$}{KRu}-regularized empirical risk minimization}

The knowledge of the extremal points of the unit ball of $G_{\alpha,\beta}$ allows to prove representer theorems for the abstract minimization problem in \eqref{eq:opt-prob}.

\begin{theorem}\label{thm:rep}
Suppose that $H = \R^N$, $p>1$ and $\alpha >0$. Then there exists a minimizer $\mu_{\min} \in \mathcal{M}_p(Z)$ 
of \eqref{eq:opt-prob} that can be written as
\begin{align}\label{eq:representation}
    \mu_{\min} = \sum_{i=1}^N c_i (\delta_{z_i} - \delta_{w_i}) + \mu_{\min}(Z)\delta_e \quad \text{where }c_i \in \R \text{ and }z_i,w_i \in Z.
\end{align}
\end{theorem}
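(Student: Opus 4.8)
The plan is to obtain \eqref{eq:representation} from the abstract representer principle for linear inverse problems with finite-dimensional data, in the spirit of \cite{bredies2020sparsity,boyer2019representer}, feeding it the two ingredients already established above: the $\KRu$-compactness of the sublevel sets of $G_{\alpha,\beta}$ for $p>1$ (\cref{prop:G-compact-KR}, which also gives closedness and, by one-homogeneity, $\{G_{\alpha,\beta}\le t\}=t\,B_{\alpha,\beta}$), and the characterization of the extreme points of $B_{\alpha,\beta}$ as scaled dipoles plus a multiple of $\delta_e$ (\cref{thm:extremal}).

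First I would dispose of the trivial case: by \cref{thm:existence_of_minimizers} a minimizer $\mu^\ast\in\M_p(Z)$ of \eqref{eq:opt-prob} exists, and if $G_{\alpha,\beta}(\mu^\ast)=0$ then $\mu^\ast=0$ and \eqref{eq:representation} holds with all $c_i=0$ and $\mu_{\min}(Z)=0$. So assume $t^\ast\defeq G_{\alpha,\beta}(\mu^\ast)>0$ and put $y^\ast\defeq\calA\mu^\ast\in\R^N$. Next I would pass to the constrained set
\[
K\defeq\{\mu\in\KRu(Z):G_{\alpha,\beta}(\mu)\le t^\ast\}\cap\calA^{-1}(\{y^\ast\}),
\]
which is nonempty (it contains $\mu^\ast$), convex, and compact in $\KRu(Z)$: it is the intersection of the $\KRu$-compact sublevel set $\{G_{\alpha,\beta}\le t^\ast\}=t^\ast B_{\alpha,\beta}$ from \cref{prop:G-compact-KR} with the $\KRu$-closed affine subspace $\calA^{-1}(\{y^\ast\})$, using continuity of $\calA$. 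A short computation shows every element of $K$ is again a minimizer of \eqref{eq:opt-prob}: for $\mu\in K$ one has $F(\calA\mu)+G_{\alpha,\beta}(\mu)=F(y^\ast)+G_{\alpha,\beta}(\mu)\le F(y^\ast)+t^\ast=F(\calA\mu^\ast)+G_{\alpha,\beta}(\mu^\ast)$, which is the optimal value, so equality holds throughout. Then I would invoke the Krein--Milman theorem in the Banach space $\KRu(Z)$ to produce an extreme point $\mu_{\min}\in\Ext(K)$, which is thus a minimizer of \eqref{eq:opt-prob}.

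The final step is the dimension count. Since $\calA^{-1}(\{y^\ast\})$ is an affine subspace of codimension at most $N$, the Carath\'eodory-type argument underlying \cite[Thm.~3.3]{bredies2020sparsity} (see also \cite{boyer2019representer}) shows that an extreme point of $K=t^\ast B_{\alpha,\beta}\cap\calA^{-1}(\{y^\ast\})$ is a convex combination of at most $N$ extreme points of $t^\ast B_{\alpha,\beta}$; by one-homogeneity of $G_{\alpha,\beta}$ these are $t^\ast$ times extreme points of $B_{\alpha,\beta}$. Writing $\mu_{\min}=t^\ast\sum_{j=1}^m\lambda_j\eta_j$ with $m\le N$, $\lambda_j>0$, $\sum_j\lambda_j=1$, $\eta_j\in\Ext(B_{\alpha,\beta})$, and inserting the form $\eta_j=a_j\delta_{x_j}-a_j\delta_{y_j}+\eta_j(Z)\delta_e$ from \cref{thm:extremal}, collecting terms gives
\[
\mu_{\min}=\sum_{j=1}^m (t^\ast\lambda_j a_j)(\delta_{x_j}-\delta_{y_j})+\Big(t^\ast\sum_{j=1}^m\lambda_j\,\eta_j(Z)\Big)\delta_e;
\]
since each dipole $\delta_{x_j}-\delta_{y_j}$ has zero mass, the coefficient of $\delta_e$ equals $\mu_{\min}(Z)$. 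Relabelling $c_i=t^\ast\lambda_i a_i$, $z_i=x_i$, $w_i=y_i$ for $i\le m$ and padding with $c_i=0$ (and arbitrary $z_i=w_i$) for $m<i\le N$ yields \eqref{eq:representation}.

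I expect the main obstacle to be this last dimension-count step: verifying, in the present non-reflexive, merely-complete setting of $\KRu(Z)$, that the abstract sparsity argument of \cite{bredies2020sparsity,boyer2019representer} applies essentially verbatim --- in particular that the topology making $B_{\alpha,\beta}$ compact (the $\KRu$ norm) is simultaneously one in which $\calA$ is continuous and Krein--Milman is available (both true here, so that $B_{\alpha,\beta}=\overline{\mathrm{conv}}(\Ext(B_{\alpha,\beta}))$), and that the Carath\'eodory decomposition of an extreme point of $K$ indeed produces at most $N$, rather than $N+1$, dipoles. The two conceptually substantial ingredients --- $\KRu$-compactness under the moment exponent $p>1$ and the extreme-point structure --- are exactly \cref{thm:compact} and \cref{thm:extremal}, so no further work is needed on those.
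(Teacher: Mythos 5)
Your proposal is correct and follows essentially the same route as the paper: the paper's proof simply verifies the hypotheses of \cite[Theorem~3.3]{bredies2020sparsity} (seminorm with trivial nullspace, $\KRu$-compact sublevel sets via \cref{thm:compact}, lower semicontinuity via \cref{thm:lsc}) and combines it with \cref{thm:extremal}, whereas you unfold the internals of that abstract result (restriction to the compact convex minimizer set $K$, Krein--Milman, and the Carath\'eodory-type count giving at most $N$ extreme points). The two ingredients you identify as essential are exactly the ones the paper relies on, and the $N$ versus $N+1$ issue you flag is resolved in your favour by the cited theorem.
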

\begin{proof}
    The proof follows from a direct application of  \cite{bredies2020sparsity}. Indeed, $G_{\alpha,\beta}$ is a seminorm with a trivial nullspace. Due to Theorem~\ref{thm:compact-embedding-locally-compact}, its sublevel sets are compact in the topology induced by the norm $\KRu$ and due to Theorem~\ref{thm:lsc}, $G_{\alpha,\beta}$ is lower semicontinuous with respect to the {\rm KRu} norm. Therefore, by applying \cite[Theorem~3.3]{bredies2020sparsity} together with Theorem~\ref{thm:extremal} we immediately obtain the representation in \eqref{eq:representation}.
\end{proof}
In particular, the previous theorem can be specialized to the case of  $\KRu$-regularized ERM problem introduced in Definition \ref{def:KRrisk} in case $\alpha >0$, $p>1$ and for measures defined on a set of weights $\Theta \subset \R^{d+1}$. Indeed, since the linear operator $\mathcal{A}$ defined in \eqref{eq:A} is mapping to $\R^N$ and $F$ defined in \eqref{eq:F} is convex, coercive and lower semicontinuous, we immediately deduce from Theorem~\ref{thm:rep} the existence of a measure $\mu_{\min} \in \M_p(\Theta)$ minimizer of \eqref{eq:risk_new} that can be written as in \eqref{eq:representation}.

\subsubsection{Recovering the representer theorem of
\texorpdfstring{\cite{bartolucci2023understanding}}{B-DV-R-V} for \texorpdfstring{$\alpha = 0$}{a=0}}\label{sec:classicalrepresenter}
Note that the representer theorem in Theorem~\ref{thm:rep} is valid under the condition that $\alpha >0$. In case $\alpha = 0$, the minimization problem \eqref{eq:opt-prob} reduces to a variant of a TV-regularized risk minimization problem. In this case a simpler representer theorem, that is a variant of the one obtained in \cite{bartolucci2023understanding} can be derived thanks to the following observation. 

\begin{lemma}\label{thm:ext-1}
For $p\geq 1$, the extremal points of the unit ball of $G_{0,1}$ are given by
    \begin{equation*}
        \Ext(B_{0,1}) = \left\{ \frac1{1+d(z,e)^p} \delta_z, \; z \in Z \right\}.
    \end{equation*}
\end{lemma}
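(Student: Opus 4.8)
\emph{Proof plan.} The plan is to reduce the statement to the classical description of the extreme points of the total variation unit ball by means of a weighted isometry. Write $w_p \defeq (1+d(\cdot,e)^p)^{-1}$ and consider the linear map $\Psi\colon\mathcal{M}(Z)\to\mathcal{M}_p(Z)$, $\Psi(\nu)\defeq w_p\cdot\nu$. By \cref{lem:identification} (applied with $q=p$) this map is a bijection, with inverse $\mu\mapsto(1+d(\cdot,e)^p)\cdot\mu$; and since $w_p>0$ we have $\abs{w_p\cdot\nu}=w_p\cdot\abs{\nu}$, while $(1+d(\cdot,e)^p)\,w_p\equiv1$, so that
\[
G_{0,1}(\Psi\nu)=\int_Z(1+d(z,e)^p)\,\d\abs{w_p\cdot\nu}(z)=\int_Z\d\abs{\nu}(z)=\|\nu\|_{\TV},
\]
i.e.\ $\Psi$ is an isometry between $(\mathcal{M}(Z),\|\cdot\|_{\TV})$ and $(\mathcal{M}_p(Z),G_{0,1})$. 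In particular $B_{0,1}=\Psi\big(\{\nu\in\mathcal{M}(Z):\|\nu\|_{\TV}\leq1\}\big)$.

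Next I would invoke the elementary fact that a linear bijection between vector spaces maps the extreme points of a convex set onto the extreme points of its image (it preserves and reflects proper convex combinations), so that $\Ext(B_{0,1})=\Psi\big(\Ext\big(\{\nu\in\mathcal{M}(Z):\|\nu\|_{\TV}\leq1\}\big)\big)$. It then remains to use the classical characterization $\Ext\big(\{\nu\in\mathcal{M}(Z):\|\nu\|_{\TV}\leq1\}\big)=\{\pm\delta_z:z\in Z\}$ (see e.g.\ \cite{bredies2020sparsity}); since $\Psi(\pm\delta_z)=\pm w_p(z)\,\delta_z$, this yields the asserted family. For completeness one would include a short self-contained argument for this classical fact: a nonzero extreme point $\nu$ must have $\|\nu\|_{\TV}=1$ by positive homogeneity; if $\abs{\nu}$ were not a point mass then $\supp\abs{\nu}$ would contain two distinct points, which can be separated by balls to produce a Borel set $A$ with $0<\abs{\nu}(A)<1$, and then $\nu=t\,\tfrac1t(\nu\mres A)+(1-t)\,\tfrac1{1-t}(\nu\mres A^{c})$ with $t=\abs{\nu}(A)$ is a proper convex combination of distinct norm-one measures, contradicting extremality; hence $\nu=c\,\delta_z$ with $\abs{c}=1$. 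Conversely, if $\delta_z=\tfrac12(\nu_1+\nu_2)$ with $\|\nu_i\|_{\TV}\leq1$, then $\|\nu_1\|_{\TV}=\|\nu_2\|_{\TV}=1$ and equality in $\abs{\nu_1+\nu_2}\leq\abs{\nu_1}+\abs{\nu_2}$ forces $\abs{\nu_1}+\abs{\nu_2}=2\delta_z$, whence $\nu_i=c_i\delta_z$ with $\abs{c_i}=1$ and $c_1+c_2=2$, so $\nu_1=\nu_2=\delta_z$.

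The argument is essentially routine; the points needing a little care are the equivalence between $\abs{\nu}$ being a point mass and the nonexistence of a splitting set $A$ with $0<\abs{\nu}(A)<\abs{\nu}(Z)$ (a small support/separation argument, using local compactness only to stay in the metric setting), and the equality case of the triangle inequality for total variation measures in the converse direction. One could also argue directly on $B_{0,1}$, bypassing $\Psi$: positive homogeneity of $G_{0,1}$ gives $G_{0,1}(\mu)=1$ on every nonzero extreme point, the same restriction--splitting argument (now weighting the split by $G_{0,1}(\mu\mres A)=\int_A(1+d(z,e)^p)\,\d\abs{\mu}$) shows $\abs{\mu}$ is concentrated at a single point $z$, and the normalization $G_{0,1}(\mu)=1$ forces $\abs{\mu(\{z\})}=(1+d(z,e)^p)^{-1}$. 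Finally, I note that strictly speaking the extreme points come in antipodal pairs $\pm(1+d(z,e)^p)^{-1}\delta_z$, which is consistent with the later use of this lemma in a representer theorem, where the coefficients are allowed to change sign.
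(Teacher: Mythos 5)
Your proof takes essentially the same route as the paper: the paper defines the inverse map $T(\mu)(A)=\int_A(1+d(z,e)^p)\,\d\mu(z)$, observes $G_{0,1}(\mu)=\norm{T(\mu)}_{\TV}$, and invokes \cite[Lemma~3.2]{bredies2020sparsity} together with the classical description of the extreme points of the total variation unit ball, exactly as you do via the weighted bijection $\Psi=T^{-1}$. Your added self-contained argument for that classical fact, and your remark that the extreme points strictly come in antipodal pairs $\pm(1+d(z,e)^p)^{-1}\delta_z$ (a sign the paper's statement also suppresses, harmlessly for the later representer theorems), are both correct.
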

\begin{proof}
    For a measure $\mu \in \M_p(Z)$ define the map $T : \M_p(Z) \rightarrow \M(Z)$ as
    \begin{equation}\label{eq:nu-mu}
        T(\mu)(A) \defeq \int_A ( 1 + d(z,e)^p ) \, \d \mu(z) \quad \text{for any Borel subset $A \subset Z$.}
    \end{equation}
     Note that $T$ is bijective and $G_{0,1}(\mu) = \norm{T(\mu)}_{\rm TV}$. Since we have that for this norm $\Ext(\{ \mu \in \M(Z) : \|\mu\|_{\rm TV} \leq 1) = \left\{ \delta_z, \; z \in Z \right\}$, the claim follows from \cite[Lemma~3.2]{bredies2020sparsity}.
\end{proof}

\begin{theorem}\label{thm:rep-01}
Suppose that $H=\R^N$, $p>1$, $\alpha = 0$. Then there exists a minimizer $\mu_{\min} \in \mathcal{M}_p(Z)$ 
of \eqref{eq:opt-prob} that can be represented as
\begin{align}
    \mu_{\min} = \sum_{i=1}^N \frac{c_i}{1 + d(z_i,e)^p} \delta_{z_i} \quad \text{ where }c_i \in \R \text{ and }z_i \in Z.
\end{align}
\end{theorem}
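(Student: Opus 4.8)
The plan is to argue exactly as in the proof of Theorem~\ref{thm:rep}, replacing the extremal point characterization of Theorem~\ref{thm:extremal} with that of Lemma~\ref{thm:ext-1} and invoking the abstract representer theorem of \cite[Thm.~3.3]{bredies2020sparsity}. Since $\alpha = 0$ and $G_{0,\beta} = \beta\,G_{0,1}$, the factor $\beta>0$ merely rescales the regularizer and will be absorbed into the final coefficients, so it suffices to work with $G_{0,1}(\mu) = \int_Z (1+d(z,e)^p)\,\d|\mu|(z)$ on $\mathcal{M}_p(Z)$ (and $+\infty$ elsewhere). This $G_{0,1}$ is a norm on $\mathcal{M}_p(Z)$ with trivial nullspace: writing $T$ for the bijection $T(\mu)(A) = \int_A (1+d(z,e)^p)\,\d\mu(z)$ of Lemma~\ref{thm:ext-1}, one has $G_{0,1}(\mu) = \|T(\mu)\|_{\TV}$, which vanishes only at $\mu = 0$; in particular $G_{0,1}$ is a seminorm with trivial kernel as required by \cite{bredies2020sparsity}.

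Next I would verify the structural hypotheses of the abstract theorem. The sublevel sets $\{\mu : G_{0,1}(\mu)\leq t\} = t\,B^p$ are compact in the $\KRu$ topology by Theorem~\ref{thm:compact-embedding-locally-compact}; $G_{0,1}$ is lower semicontinuous with respect to the $\KRu$ norm by Theorem~\ref{thm:lsc} (with $\alpha=0$); and the data term $F\circ\mathcal{A}$, with $H=\R^N$, is continuous along $\KRu$-convergent sequences (by continuity of $\mathcal{A}$ and of the finite-dimensional convex coercive function $F$) and bounded below, so minimizing sequences remain in a fixed sublevel set of $G_{0,1}$ and a minimizer exists, which is precisely Theorem~\ref{thm:existence_of_minimizers}.

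With these ingredients in place, \cite[Thm.~3.3]{bredies2020sparsity} produces a minimizer $\mu_{\min}$ of \eqref{eq:opt-prob} that is a finite combination, with at most $\dim H = N$ terms, of nonnegative multiples of extremal points of the unit ball $B_{0,1}$ of $G_{0,1}$. By Lemma~\ref{thm:ext-1} these extremal points are (up to sign) the normalized Diracs $\frac{1}{1+d(z,e)^p}\delta_z$, $z\in Z$; absorbing the sign and the nonnegative multiple, together with the factor $\beta$ if one works with $G_{0,\beta}$ directly, into a single real coefficient $c_i$, we obtain $\mu_{\min} = \sum_{i=1}^N \frac{c_i}{1+d(z_i,e)^p}\delta_{z_i}$ with $c_i\in\R$ and $z_i\in Z$, which is the asserted form.

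I do not expect a genuine obstacle: the analytic heavy lifting --- the $\KRu$-compactness of moment-bounded families (Theorem~\ref{thm:compact-embedding-locally-compact}) and the identification of the extremal points through the bijection $T$ (Lemma~\ref{thm:ext-1}) --- has already been carried out, and what remains is the bookkeeping of matching the precise hypotheses of \cite[Thm.~3.3]{bredies2020sparsity}: coercivity of $G_{0,1}$ supplied by compactness of its sublevel sets, lower semicontinuity in a topology compatible with that compactness supplied by Theorem~\ref{thm:lsc}, and the finite dimensionality $H=\R^N$ bounding the number of atoms. The only mild subtlety worth flagging is that the extremal points of the total variation ball are $\pm\delta_z$, so one should read Lemma~\ref{thm:ext-1} up to sign, or equivalently let $c_i$ range over all of $\R$, exactly as in the statement.
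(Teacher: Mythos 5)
Your proposal is correct and follows essentially the same route as the paper: the paper's proof of this theorem is a one-line application of \cite[Theorem~3.3]{bredies2020sparsity} combined with the extremal-point characterization of Lemma~\ref{thm:ext-1}, exactly as you describe. Your additional verification of the hypotheses (compactness of sublevel sets from Theorem~\ref{thm:compact-embedding-locally-compact}, lower semicontinuity from Theorem~\ref{thm:lsc}, trivial nullspace via the bijection $T$, and the sign/rescaling bookkeeping) is the same checklist the paper carries out explicitly for the analogous Theorem~\ref{thm:rep}, so it is consistent with, if slightly more detailed than, the paper's argument.
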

\begin{proof}
    The proof follows again by applying \cite[Theorem~3.3]{bredies2020sparsity} using the characterization of extremal points of the unit ball of $G_{0,1}$ obtained in Theorem~\ref{thm:ext-1}. 
\end{proof}
This representer theorem can be specialized to the case of the TV-regularized ERM problem for measures defined on a set of weights $\Theta \subset \R^{d+1}$, i.e. \eqref{eq:risk_new} for $\alpha = 0$, $\beta >0$ and $p>1$.
It is worth noting that the representer theorem \cite[Theorem~3.9]{bartolucci2023understanding} can be recovered from \cref{thm:rep-01} simply by observing that for $w_p=(1+d(\cdot,e)^p)^{-1}$
\begin{align}
    & \inf_{\mu \in \M_p(\Theta)} \frac{1}{N} \sum_{i=1}^N L(y_i, f_\mu(x_i))  +  \beta\int_\Theta (1 + d(\theta,e)^p) \d \abs{\mu}(\theta) \nonumber\\
    & = \inf_{\nu \in \M(\Theta)} \frac{1}{N} \sum_{i=1}^N L(y_i, f_{w_p\cdot \nu}(x_i))    + \beta \norm{\nu}_{\rm TV} \label{eq:TVri}
\end{align}
where $f_{w_p\cdot \nu}$ is defined as in \eqref{eq:pairingg} and it can be interpreted as a pairing between $C_0(\Theta)$ and $\mathcal{M}(\Theta)$ (see Section~\ref{sec:controlledgrowth}), thus recovering the risk minimization problem analyzed in~\cite{bartolucci2023understanding}. In particular, by noticing that minimizers of \eqref{eq:TVri} can be constructed from minimizers of \eqref{eq:risk_new} (for $\alpha = 0$) via the map $T$ in \eqref{eq:nu-mu}, it is easy to see that Theorem~\ref{thm:rep-01} implies the representer theorem \cite[Theorem~3.9]{bartolucci2023understanding}.

\subsection{A representer theorem for distillation problems}
In this Section~we prove a representer theorem for the distillation of an infinitely wide neural network presented in Section~\ref{sec:applications}.   
To this aim, we consider a reference measure $\mu^* \in \mathcal{M}_p(\Theta)$ defined on a set of weights $\Theta \subset \R^{d+1}$. The representer theorem is stated as follows.

\begin{theorem}\label{thm:rep_distillation}
   Suppose that $L(y,w) = \ell(y - w)$, where $\ell: \R \rightarrow \R$ is a convex, coercive and lower semicontinuous loss. If $p>1$ there exists a solution of \eqref{eq:risk_distillation} that can be written as 
    \begin{align}\label{eq:repdi}
    \mu_{\min} = \sum_{i=1}^N c_i (\delta_{\theta_i} -  \delta_{\eta_i}) + \mu_{\min}(\Theta)\delta_e + \mu^* \quad \text{ for }\theta_i,\eta_i \in \Theta \text{ and }c_i \in \R.   
    \end{align}
\end{theorem}
\begin{proof}
Consider the following two functionals
\begin{align*}
    &G_1(\mu) = \frac{1}{N} \sum_{i=1}^N \ell(y_i - f_\mu(x_i)) +  G_{\alpha,\beta}(\mu - \mu^*),   \\
    &G_2(\mu) = \frac{1}{N} \sum_{i=1}^N \ell(y_i - f_{\mu + \mu^*}(x_i)) + G_{\alpha,\beta}(\mu).
\end{align*}
They have the same infimum in $\KRu(\Theta)$, and if $\bar \mu$ is a minimizer of $G_2$, then $\bar \mu + \mu^*$ is a minimizer of $G_1$. Moreover, noting that $f_{\mu + \mu^*}(x_i) = f_{\mu^*}(x_i) + f_{\mu}(x_i)$ we can write $G_2$ as in \eqref{eq:opt-prob} by choosing $\mathcal{A} : \KRu(\Theta) \rightarrow \R^N$ as $(\mathcal{A}\mu)_i = \scal{\mu}{\sigma(\langle \cdot , x_i\rangle)}{\KRu(Z)}{\Lip(\Theta)}$ and 
$F(w_1,\ldots,w_N) = \frac{1}{N}\sum_{i=1}^N \ell(y_i - f_{\mu^*}(x_i) - w_i)$. 
Therefore, by applying Theorem~\ref{thm:rep} to $G_2$ we deduce the existence of a minimizer of $G_2$ of the form \eqref{eq:representation} and consequently a minimizer of $G_1$ written as \eqref{eq:repdi}. \sloppy
\end{proof}

\subsection{The non-compact case}

If $p=1$, we have seen that existence of minimizers for \eqref{eq:opt-prob} is not ensured. 
However, despite the bad-behaved nature of the problem we want to provide a representer theorem in case minimizers exist. This is achieved by using the general representer theorem proven in \cite{boyer2019representer} that does not require topological assumptions to hold.
For the sake of clarity we introduce some necessary definitions from convex analysis. For details, we refer the interested reader to \cite{boyer2019representer}.

\begin{definition}\label{def:convexanalysis}
Let $V$ be a vector space and $A \subset V$:
\begin{itemize}
    \item[i)] We say that $A$ is linearly closed if the intersection~of $A$ with every line $\{\mu + tv :\, t \in \R\}$ with $\mu \in  V$ and $v \in V \setminus \{0\}$ is closed with respect to the finite dimensional topology of the line.
\item[ii)] The recession cone of $A$, denoted as $\text{rec}(A)$, is defined as the collection of all $v \in V$ such that $A + \R_+v$ is contained in $A$. The lineality space of $A$, denoted as $\text{lin}(A)$ is defined as
$\text{lin}(A) := \text{rec}(A) \cup (- \text{rec}(A))$.
\item[iii)] A ray of $A$ is any half-line $\{\mu + tv :\, t \in \R_+\} \subset A$ with $\mu \in  V$ and $v \in V \setminus \{0\}$.
\end{itemize}
\end{definition}

\begin{theorem}\label{thm:repp=1}
Suppose that $H=\R^N$ and $p=1$. Moreover, assume that the set of minimizers of \eqref{eq:opt-prob} is non-empty and weak$^*$ compact. Then, at least one of these minimizers is represented as 
\begin{align*}
   \mu_{\min} = \sum_{i=1}^N c_i (\delta_{z_i} - \delta_{w_i}) + \mu_{\min}(Z)\delta_e, \quad \text{where }c_i \in \R \text{ and }z_i,w_i \in Z. 
\end{align*}
\end{theorem}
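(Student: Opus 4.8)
The plan is to follow the strategy of the proof of Theorem~\ref{thm:rep}, but since for $p=1$ the sublevel sets of $G_{\alpha,\beta}$ are no longer compact (Theorem~\ref{thm:compact-embedding-locally-compact} fails), I would replace the Bredies--Carioni representer theorem of \cite{bredies2020sparsity} by the general representer theorem of \cite{boyer2019representer}, which imposes no topological requirement on the regularizer: it only asks that the sublevel sets of $G_{\alpha,\beta}$ be \emph{linearly closed} and that the solution set contain no line. The first preliminary observation is that, since $G_{\alpha,\beta} \equiv +\infty$ outside $\mathcal{M}_p(Z) = \mathcal{M}_1(Z)$, the minimization in \eqref{eq:opt-prob} effectively takes place on $\mathcal{M}_1(Z)$, where $G_{\alpha,\beta}$ is a seminorm (in fact a norm).

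Next I would verify the two hypotheses of \cite{boyer2019representer}. For linear closedness: fix $c \geq 0$ and a line $\{\mu_0 + t\nu : t \in \R\}$ with $\nu \neq 0$; if this line leaves $\mathcal{M}_1(Z)$ then $G_{\alpha,\beta}$ is finite on it at most at one point, so the intersection with $\{G_{\alpha,\beta} \leq c\}$ is empty or a singleton, hence closed; otherwise $t \mapsto G_{\alpha,\beta}(\mu_0 + t\nu)$ is a finite convex, hence continuous, function of the real variable $t$, so its sublevel set on the line is closed. For the solution set: by hypothesis the set $\mathcal{S}$ of minimizers of \eqref{eq:opt-prob} is non-empty and weak$^*$ compact, hence norm-bounded and therefore contains no line, i.e.\ $\mathrm{lin}(\mathcal{S}) = \{0\}$; moreover, being non-empty, convex and weak$^*$ compact, $\mathcal{S}$ has an extremal point by Krein--Milman.

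With these checks in place, I would invoke \cite{boyer2019representer}: since $F \circ \mathcal{A} : \KRu(Z) \to (-\infty, +\infty]$ is convex and lower semicontinuous, $\mathcal{A}$ maps into the finite-dimensional space $H = \R^N$, $G_{\alpha,\beta}$ is a seminorm with linearly closed sublevel sets, and $\mathcal{S}$ is non-empty and line-free, there exists a minimizer $\mu_{\min}$ of \eqref{eq:opt-prob} that is a conic combination of at most $N$ extremal points of the unit ball $B_{\alpha,\beta} = \{\mu : G_{\alpha,\beta}(\mu) \leq 1\}$. Plugging in the characterization of $\mathrm{Ext}(B_{\alpha,\beta})$ from Theorem~\ref{thm:extremal} (for $\alpha > 0$; Theorem~\ref{thm:ext-1} handles $\alpha = 0$, where a normalized Dirac is cast in the same form by taking $w_i = e$), each such extremal point is a dipole $a\delta_x - a\delta_y$ plus a multiple of $\delta_e$, so summing and collecting the $\delta_e$-coefficients — which together must equal $\mu_{\min}(Z)$ by evaluating the total mass — gives exactly $\mu_{\min} = \sum_{i=1}^N c_i(\delta_{z_i} - \delta_{w_i}) + \mu_{\min}(Z)\delta_e$, padding with zero coefficients if fewer than $N$ extremal points occur.

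I expect the main obstacle to be the careful matching of the abstract hypotheses of \cite{boyer2019representer}: in particular, pinning down the correct weak$^*$ topology on $\mathcal{M}_1(Z)$ (via the duality $\mathcal{M}_1(Z) \cong C_{w_1}(Z)^*$ from Lemma~\ref{lem:identification}) under which compactness of the solution set is assumed, checking that this is exactly the hypothesis their theorem needs (or that boundedness plus linear closedness suffice), and confirming that the extremal point description of Theorem~\ref{thm:extremal} is genuinely valid in the regime $p = 1$ — which it is, since that characterization, together with the optimal transport identity $\|\nu\|_{\KR} = W_1(\nu^+, \nu^-)$, was established for all $p \geq 1$.
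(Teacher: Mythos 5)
Your proposal follows essentially the same route as the paper: replace the Bredies--Carioni theorem by the representer theorem of \cite{boyer2019representer}, verify linear closedness of the sublevel sets of $G_{\alpha,\beta}$ on $\mathcal{M}_1(Z)$, obtain extreme points of the solution set from weak$^*$ compactness via Krein--Milman, and conclude with the characterization of $\Ext(B_{\alpha,\beta})$ (which, as you correctly note, does not use $p>1$ anywhere in its proof). Your handling of linear closedness via convexity-implies-continuity on lines, together with the observation that a line meets $\mathcal{M}_1(Z)$ in at most a point unless its direction lies in $\mathcal{M}_1(Z)$, is a perfectly sound substitute for the paper's explicit Lipschitz estimate $\abs{G_{\alpha,\beta}(\mu+t_n v)-G_{\alpha,\beta}(\mu+tv)}\le \abs{t_n-t}\bigl(\alpha\norm{v}_\KRu+\beta\int_Z(1+d(z,e))\d\abs{v}\bigr)$.

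There is, however, one hypothesis of \cite{boyer2019representer} that you do not verify and that does not follow from what you checked: the sublevel sets of the \emph{regularizer} must contain no rays (equivalently, $\mathrm{rec}(\mathcal{S}(G_{\alpha,\beta},s))=\{0\}$), so that extreme points of the solution set decompose into genuine extreme points of $B_{\alpha,\beta}$ with no extreme-ray directions appearing. You only argue that the \emph{solution set} is line-free (via boundedness), which is a statement about a different set and does not imply ray-freeness of $\{G_{\alpha,\beta}\le s\}$. The gap is easily filled: for $v\neq 0$ one has $G_{\alpha,\beta}(\mu+tv)\ge \beta t\int_Z(1+d(z,e))\,\d\abs{v}-\beta\int_Z(1+d(z,e))\,\d\abs{\mu}\to+\infty$ as $t\to+\infty$, which is exactly the computation in the paper. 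A second, minor omission is the nondegeneracy condition $\inf_{\mathcal{M}_1(Z)}G_{\alpha,\beta}<G_{\alpha,\beta}(\mu^*)$ required by \cite{boyer2019representer} (so that the solution set sits on the boundary of a sublevel set); the paper disposes of it by noting that $\inf G_{\alpha,\beta}=0$ is attained only at $\mu=0$, so the excluded case is the trivial one. Finally, your invocation of Theorem~\ref{thm:extremal} (rather than the Dirac characterization of Lemma~\ref{thm:ext-1}) is the correct one for the stated dipole representation when $\alpha>0$.
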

\begin{proof}
We apply Corollary 2 in \cite{boyer2019representer} to the functional $\eqref{eq:opt-prob}$ for $\mu \in \mathcal{M}_1(Z)$. First we verify that the $s$-sublevel sets of $G_{\alpha,\beta}$ denoted by $\mathcal{S}(G_{\alpha,\beta},s)$ are linearly closed according to Definition \ref{def:convexanalysis}. Consider a line in $\mathcal{M}_1(Z)$, that is $\{\mu + tv :\, t \in \R\}$ where $\mu \in  \mathcal{M}_1(Z)$, $v \in \mathcal{M}_1(Z) \setminus \{0\}$. Consider now a sequence of $t_n$ such that $t_n \rightarrow t$ and $\mu + t_n v \in \mathcal{S}(G_{\alpha,\beta},s)$. To prove that $\mu + t v \in \mathcal{S}(G_{\alpha,\beta},s)$ it is enough to note that 
\begin{align*}
   &\lim_{n\rightarrow+\infty} |G_{\alpha,\beta}(\mu + t_n v) -  G_{\alpha,\beta}(\mu + t v)| \\
   \leq&  \lim_{n\rightarrow+\infty} |t_n - t| \Bigg( \alpha \|v\|_{\KRu} + \beta \int_Z (1 + d(z,e)) \d|v| \Bigg) = 0.
\end{align*}
Now, let $v \in \mathcal{M}_1(Z)$ be such that $v \neq 0$ and consider a sequence $t_n \rightarrow \infty$ and $\mu \in \mathcal{S}(G_{\alpha,\beta},s)$. Then 
\begin{align*}
    \lim_{n\rightarrow +\infty} G_{\alpha,\beta}(\mu + t_n v) & \geq  \lim_{n\rightarrow +\infty} \int_Z (1 + d(z,e))\, \d|\mu + t_n v| \\
    & \geq \lim_{n\rightarrow +\infty}  t_n \int_Z (1 + d(z,e))\d|v| - \int_Z (1 + d(z,e))\d|\mu| = +\infty
\end{align*}
implying that $v \notin \text{rec}(\mathcal{S}(G_{\alpha,\beta},s))$ and thus $\text{lin}(\mathcal{S}(G_{\alpha,\beta},s)) =\{0\}$. Moreover, $\mathcal{S}(G_{\alpha,\beta},s)$ contains no rays.
Note now that $\inf_{\mathcal{M}_1(Z)} G_{\alpha,\beta}(\mu) = 0$, that is achieved only at $\mu = 0$. Therefore, without loss of generality we can assume that
\begin{align*}
\inf_{\mu \in \mathcal{M}_1(Z)} G_{\alpha,\beta}(\mu) < G_{\alpha,\beta}(\mu^*)
\end{align*}
where $\mu^*$ is a minimizer of \eqref{eq:opt-prob}.
We can then apply Corollary 2 in \cite{boyer2019representer}, to say that, since $\mathcal{S}(G_{\alpha,\beta},s)$ does not contain any rays, all extremal points of the solution set can be written as a linear combination of $N$ extremal points ($0$-dimensional faces) of the ball of $G_{\alpha,\beta}$. Note that since the set of minimizers is non-empty and weak$^*$ compact, 
the set of its extremal points is non-empty due to Krein-Milman theorem. Therefore, by applying Lemma~\ref{thm:ext-1} with $p=1$ we conclude.
\end{proof}

Similarly to the  previous sections, Theorem~\ref{thm:repp=1} can be specialized to the case of  $\KRu$-regularized ERM problem introduced in Definition \ref{def:KRrisk} in case $\alpha >0$, $p=1$.

\section{Strong \texorpdfstring{$\KRu$}{KRu} convergence of minimizers in the infinite data limit}\label{sec:samplinglimit}

In this section, we explore some consequences of the compactness in KRu norm given by \cref{thm:compact-embedding-locally-compact} for the learning of representations arising from the mean field parametrization \eqref{eq:infinite_width_nn}. In particular we consider a measure $\mu \in \mathcal{M}_1(\Theta)$ on a set of weights $\Theta \subset \R^{d+1}$ and a parametrization $f_\mu:X \to \R$ as in \eqref{eq:infinite_width_nn}, where $X \subseteq \R^d \times \{1\}$.

We start with a direct implication of strong convergence of parameter measures $\mu \in \mathcal{M}_1(\Theta)$ in the KRu norm:
\begin{lemma}\label{lem:unifconv}Denote by $f_\mu:X \to \R$ for $\mu \in \M_1(\Theta)$ the parametrization given by \eqref{eq:infinite_width_nn}. If a sequence $\mu_n \in \M_1(\Theta)$ converges strongly in KRu norm to some $\mu_\infty$, that is $\norm{\mu_n - \mu_\infty}_\KRu \to 0$, we have that the functions $f_{\mu_n}$ converge to $f_{\mu_\infty}$ uniformly on bounded sets of $X$.    
\end{lemma}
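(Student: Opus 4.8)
The plan is to reduce everything to the duality $\KRu(\Theta)^* \cong \Lip(\Theta)$ established in Theorem~\ref{thm:dualityKR}, together with a uniform bound on the Lipschitz norm of the feature map $x \mapsto \sigma(\langle\cdot,x\rangle)$ over bounded subsets of $X$. Since $\mu_n,\mu_\infty \in \M_1(\Theta)$, the parametrizations are genuine integrals and can be written as the dual pairing $f_\mu(x) = \langle \mu,\sigma(\langle\cdot,x\rangle)\rangle_{\KRu(\Theta),\Lip(\Theta)}$. Hence, for every $x\in X$,
\[
|f_{\mu_n}(x) - f_{\mu_\infty}(x)| = \bigl|\langle \mu_n - \mu_\infty,\, \sigma(\langle\cdot,x\rangle)\rangle\bigr| \leq \norm{\mu_n - \mu_\infty}_\KRu \,\bigl\|\sigma(\langle\cdot,x\rangle)\bigr\|_{\Lip(\Theta)},
\]
so the only thing left to control is $\bigl\|\sigma(\langle\cdot,x\rangle)\bigr\|_{\Lip(\Theta)}$, uniformly for $x$ in a bounded set.

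First I would bound this Lipschitz norm explicitly. Since $\sigma\colon\R\to\R$ is globally Lipschitz (this is the standing assumption of Section~\ref{sec:two}), with constant $L(\sigma)$, for any $\theta,\theta'\in\Theta$ one has $|\sigma(\langle\theta,x\rangle) - \sigma(\langle\theta',x\rangle)| \leq L(\sigma)\,|\langle\theta-\theta',x\rangle| \leq L(\sigma)\,\|x\|\,\|\theta-\theta'\|$, so the Lipschitz constant of $\theta\mapsto\sigma(\langle\theta,x\rangle)$ is at most $L(\sigma)\|x\|$; and its value at the base point satisfies $|\sigma(\langle e,x\rangle)| \leq |\sigma(0)| + L(\sigma)\|e\|\,\|x\|$. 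By the definition \eqref{eq:norm-Lip-e} of $\norm{\cdot}_{\Lip}$, this yields
\[
\bigl\|\sigma(\langle\cdot,x\rangle)\bigr\|_{\Lip(\Theta)} \leq \max\bigl\{\,|\sigma(0)| + L(\sigma)\|e\|\,\|x\|,\; L(\sigma)\|x\|\,\bigr\},
\]
which for $x$ ranging over $\{x\in X : \|x\|\leq R\}$ is bounded by a constant $C(R)$ depending only on $R$, $\sigma$ and the base point $e$.

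Combining the two estimates gives $\sup_{x\in X,\,\|x\|\leq R} |f_{\mu_n}(x) - f_{\mu_\infty}(x)| \leq C(R)\,\norm{\mu_n - \mu_\infty}_\KRu \to 0$ as $n\to\infty$, and since $R>0$ was arbitrary this is exactly uniform convergence on bounded subsets of $X$. I do not expect any genuine obstacle here: the argument is a one-line duality estimate, and the only (mild) point needing care is the uniform control of the feature map's $\Lip$-norm, which is immediate from $\sigma$ being globally Lipschitz. If one wanted to treat non-Lipschitz activations as in Section~\ref{sec:controlledgrowth}, this step would instead require a growth bound compatible with the moment order, but that is outside the scope of this lemma.
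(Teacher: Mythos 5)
Your argument is correct and is essentially the paper's own proof: both reduce the claim to the duality $\KRu(\Theta)^*\cong\Lip(\Theta)$ of \cref{thm:dualityKR} and a uniform bound on the $\Lip$-norm of $\theta\mapsto\sigma(\langle\theta,x\rangle)$ for $x$ in a bounded set. The only difference is that you spell out the estimate (including the base-point value $|\sigma(\langle e,x\rangle)|$ and the constant $L(\sigma)$), whereas the paper simply notes that these functions are $\norm{x}$-Lipschitz and invokes the duality directly.
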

\begin{proof}This follows directly from the duality of \cref{thm:dualityKR} and the expression \eqref{eq:meanfield}. Namely, for any $R>0$ we have
    \begin{equation*}
        \sup_{x \colon \norm{x} \leq R} \abs{f_{\mu_N^\beta}(x) - f_{\mu_\infty}(x)} = \sup_{x \colon \norm{x} \leq R} \abs{\int_\Theta \sigma(\sp{\omega,x}) \d(\mu_N^\beta - \mu_\infty)} \to 0 
    \end{equation*}
    since the functions $\omega \mapsto \sigma(\sp{\omega,x})$ are $\norm{x}$-Lipschitz. 
\end{proof}
For the rest of this section, we consider the following regularized regression problem with $N$ data points:
\begin{equation}\label{eq:minimisers_N_beta}
    \mu_N^\beta \in \argmin_{\mu \in \KRu(\Theta)} \frac{1}{N} \sum_{i=1}^N L(y_i, f_\mu(x_i)) + \beta \int_\Theta ( 1 + d(\theta,e)^p ) \, \d\abs{\mu}(\theta),
\end{equation}
where $(x_i,y_i) \iid \rho$ are samples from some joint probability measure $\rho \in \calP(X \times \R)$. One could also consider the regularizer $G_{\alpha,\beta}$ defined in~\eqref{eq:G-definition}, but we restrict ourselves to the case $\alpha=0$ for simplicity. 
Suppose that there is no noise in the data $\{(x_i,y_i)\}_{i=1}^N$ and the model is well specified, i.e. there exists a $\mu$ such that $y=f_\mu(x)$  $\rho$-a.e. We want to examine the behavior of minimizers $\mu_N^\beta$ as $N \to \infty$ and $\beta \to 0$. 
\begin{theorem}\label{thm:minimisers-strong-conv-KR}
    Suppose that the assumptions above hold, and that the loss $L \colon \R \times \R \to \R_+$ is continuous and satisfies the following conditions for some $C_1,C_2,r>0$: 
    \begin{subequations}
    \begin{alignat}{2}
        L(y,y' + z-z') &\leq C_1(L(y,y') + L(z,z'))\quad &&\text{ for any }\ y,y',z,z' \in \R;\label{eq:triangle}\\
        L(y,y') &\leq C_2 \abs{y-y'}^r&&\text{ for any }\ y,y' \in \R;\label{eq:LrLip}\\
        L(y,y') & = 0 &&\text{ only if} \quad y = y'. \label{eq:zero}
    \end{alignat}
    \end{subequations}
    Suppose also that the function $f_\mu(x)$ defined in~\eqref{eq:infinite_width_nn} is uniformly continuous, positively one-homogeneous, that is $f_\mu(tx) = t f_\mu(x)$ for any $x \in X$ and $t>0$, and that the probability measure $\rho$ has $\tilde{r} > r$ finite moments, that is
    \begin{equation*}
        \int_{X \times \R} \norm{x}^{\tilde{r}} + |y|^{\tilde{r}} \d\rho(x,y) < \infty.
    \end{equation*}
    Then with probability $1$, the sampled data $\{(x_i,y_i)\}_{i \in \N}$ are such that for any sequence of minimizers as in \eqref{eq:minimisers_N_beta}, there is a subsequence that converges as $N \to \infty$ and $\beta \to 0$ to a solution
    \begin{equation}\label{eq:mu-dagger-argmin}
    \mu^\dagger \in \argmin_{\mu \in \KRu(\Theta)} \left\{\int_\Theta ( 1 + d(\theta,e)^p ) \, \d\abs{\mu}, \quad \text{ s.t. $y=f_\mu(x)$  $\rho$-a.e.}\right\}.
\end{equation}
\end{theorem}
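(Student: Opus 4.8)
The plan is to run the direct method on the empirical problems \eqref{eq:minimisers_N_beta} and pass to the limit $N\to\infty$, $\beta\to 0$, with the compactness of \cref{thm:compact-embedding-locally-compact} playing the role of the Banach--Alaoglu theorem. First I would fix once and for all an interpolant $\mu^\ast\in\mathcal{M}_p(\Theta)$ with $y=f_{\mu^\ast}(x)$ for $\rho$-a.e.\ $(x,y)$: this is the well-specified hypothesis, and it makes the optimal value $m^\dagger:=\inf\{\int_\Theta(1+d(\theta,e)^p)\,\d|\mu|\ :\ y=f_\mu(x)\ \rho\text{-a.e.}\}$ finite, bounded by $m^\ast:=\int_\Theta(1+d(\theta,e)^p)\,\d|\mu^\ast|$. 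Almost surely the i.i.d.\ samples satisfy $y_i=f_{\mu^\ast}(x_i)$ for every $i$, so (using $L(y,y)=0$, which follows from \eqref{eq:LrLip}) $\mu^\ast$ is admissible in \eqref{eq:minimisers_N_beta} with vanishing data term, and minimality of $\mu_N^\beta$ forces
\[
\frac1N\sum_{i=1}^N L(y_i,f_{\mu_N^\beta}(x_i))+\beta\int_\Theta(1+d(\theta,e)^p)\,\d|\mu_N^\beta|\ \le\ \beta\,m^\ast .
\]
Both left-hand terms being nonnegative, I read off simultaneously the uniform moment bound $\int_\Theta(1+d(\theta,e)^p)\,\d|\mu_N^\beta|\le m^\ast$ and the decay $\frac1N\sum_i L(y_i,f_{\mu_N^\beta}(x_i))\le\beta\,m^\ast\to 0$. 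By \cref{thm:compact-embedding-locally-compact} (this is where $p>1$ enters) the family $\{\mu_N^\beta\}$ is precompact in $\KRu(\Theta)$, so along a subsequence $\mu_{N_k}^{\beta_k}\to\mu^\dagger$ strongly in $\KRu$, with $\mu^\dagger\in\mathcal{M}_p(\Theta)$ and $\int_\Theta(1+d(\theta,e)^p)\,\d|\mu^\dagger|\le m^\ast$.

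Next I would show that $\mu^\dagger$ is feasible, i.e.\ $y=f_{\mu^\dagger}(x)$ for $\rho$-a.e.\ $(x,y)$. Strong $\KRu$ convergence, the duality $\KRu(\Theta)^\ast\cong\Lip(\Theta)$ of \cref{thm:dualityKR}, and the $\|x\|$-Lipschitz dependence of $\theta\mapsto\sigma(\langle\theta,x\rangle)$ give, as in \cref{lem:unifconv}, that $f_{\mu_{N_k}^{\beta_k}}\to f_{\mu^\dagger}$ uniformly on bounded sets, and in particular $f_{\mu_{N_k}^{\beta_k}}(x_k)\to f_{\mu^\dagger}(x)$ whenever $x_k\to x$. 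Combining this continuous convergence with the almost-sure narrow convergence of the empirical measures $\rho_N:=\frac1N\sum_{i=1}^N\delta_{(x_i,y_i)}$ to $\rho$ (Varadarajan's theorem), and the nonnegativity and continuity of $L$, the standard lower semicontinuity of integral functionals along narrowly convergent measures with continuously converging nonnegative integrands yields
\[
\int_{X\times\R}L(y,f_{\mu^\dagger}(x))\,\d\rho\ \le\ \liminf_k\int L(y,f_{\mu_{N_k}^{\beta_k}}(x))\,\d\rho_{N_k}\ =\ \liminf_k\frac1{N_k}\sum_i L(y_i,f_{\mu_{N_k}^{\beta_k}}(x_i))\ \le\ \liminf_k\beta_k\,m^\ast\ =\ 0 .
\]
Since $L\ge 0$ this forces $L(y,f_{\mu^\dagger}(x))=0$ for $\rho$-a.e.\ $(x,y)$, and \eqref{eq:zero} then gives $y=f_{\mu^\dagger}(x)$ $\rho$-a.e. (One may instead replace this step by a uniform law of large numbers over the $\KRu$-compact set $\{\mu:\int_\Theta(1+d(\theta,e)^p)\,\d|\mu|\le m^\ast\}$: \eqref{eq:LrLip} with the positive one-homogeneity of $f_\mu$ produces the envelope $L(y,f_\mu(x))\le C(|y|^r+\|x\|^r)$, which lies in $L^{\tilde r/r}(\rho)$ exactly because $\tilde r>r$, while \eqref{eq:triangle} and \eqref{eq:LrLip} supply the equicontinuity of this family needed for a Glivenko--Cantelli argument; this is the route in which \eqref{eq:triangle}, \eqref{eq:LrLip} and the moment gap $\tilde r>r$ are genuinely used.)

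It then remains to prove that $\mu^\dagger$ solves \eqref{eq:mu-dagger-argmin}. Feasibility already gives $\int_\Theta(1+d(\theta,e)^p)\,\d|\mu^\dagger|\ge m^\dagger$. For the reverse, any \emph{fixed} feasible $\nu$ interpolates the samples exactly with probability one, hence is admissible in \eqref{eq:minimisers_N_beta} with zero data term, so minimality yields $\int_\Theta(1+d(\theta,e)^p)\,\d|\mu_{N_k}^{\beta_k}|\le\int_\Theta(1+d(\theta,e)^p)\,\d|\nu|$, and the $\KRu$-lower semicontinuity of the $p$-moment functional (\cref{thm:lsc}) gives $\int_\Theta(1+d(\theta,e)^p)\,\d|\mu^\dagger|\le\int_\Theta(1+d(\theta,e)^p)\,\d|\nu|$. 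To upgrade this to the sharp bound on a single full-measure event I would fix in advance a countable family $\{\nu_j\}_{j\in\N}$ of feasible measures with $\int_\Theta(1+d(\theta,e)^p)\,\d|\nu_j|\downarrow m^\dagger$ (which exists by definition of the infimum); almost surely the samples interpolate every $\nu_j$, and on this event the previous inequality holds for all $j$, whence $\int_\Theta(1+d(\theta,e)^p)\,\d|\mu^\dagger|\le m^\dagger$. Thus $\mu^\dagger$ attains $m^\dagger$ and solves \eqref{eq:mu-dagger-argmin}, and the almost-sure statement follows on the intersection of the countably many full-measure events used: $\rho_N\weakto\rho$ and the samples interpolating $\mu^\ast$ and each $\nu_j$. (Existence of the minimizers $\mu_N^\beta$ themselves is \cref{thm:existence_of_minimizers}.)

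The step I expect to be the main obstacle is the passage to the limit in the data term, i.e.\ deducing $\int L(y,f_{\mu^\dagger}(x))\,\d\rho=0$ from the vanishing \emph{empirical} losses: because $\mu^\dagger$ is itself a limit that depends on the realization and on the chosen subsequence, one cannot naively invoke a law of large numbers evaluated at $\mu^\dagger$, and must instead either exploit the one-sided lower semicontinuity under narrow convergence (which needs only $L\ge 0$) or establish a genuine uniform law of large numbers over the compact moment ball (which is where \eqref{eq:triangle}--\eqref{eq:LrLip} and the moment gap $\tilde r>r$ enter). A second, more bookkeeping-type subtlety is the order of quantifiers in the almost-sure optimality statement, handled above by comparing $\mu^\dagger$ only against a fixed countable near-optimal family of competitors rather than pathwise against all feasible measures.
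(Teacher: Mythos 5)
Your proposal is correct and follows the same skeleton as the paper's proof (comparison with an interpolant to get the uniform $p$-moment bound and the vanishing empirical risk, \cref{thm:compact-embedding-locally-compact} for strong $\KRu$ subsequential convergence, locally uniform convergence of the realizations as in \cref{lem:unifconv}, feasibility of the limit, and lower semicontinuity for optimality), but it diverges in two places worth recording. For the feasibility step the paper does \emph{not} use your joint lower-semicontinuity lemma; it uses \eqref{eq:triangle} to split $\tfrac1N\sum_i L(y_i,f_{\mu^\dagger}(x_i))$ into the empirical risk of $\mu_N^\beta$ (bounded by $\beta m^\ast$) plus $\tfrac1N\sum_i L(f_{\mu^\dagger}(x_i),f_{\mu_N^\beta}(x_i))$, controls the latter via \eqref{eq:LrLip} and positive one-homogeneity by $\bigl(\tfrac1N\sum_i\|x_i\|^r\bigr)\sup_j|f_{\mu^\dagger}(x_j/\|x_j\|)-f_{\mu_N^\beta}(x_j/\|x_j\|)|^r$, and then tests the \emph{fixed} function $g(x,y)=L(y,f_{\mu^\dagger}(x))\in C_{w_{\bar r}}(X\times\R)$ against $\rho_N\weakto\rho$, which is exactly where the moment gap $\tilde r>r$ enters. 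Your primary route (Skorokhod/Fatou-type liminf inequality for nonnegative continuously converging integrands under narrow convergence) is valid for separable metric spaces and is genuinely leaner: it bypasses \eqref{eq:triangle}, \eqref{eq:LrLip}, the homogeneity, and the moment gap entirely, so you should either cite that lemma explicitly or note that you are proving the theorem under weaker hypotheses than stated; the paper's route is more explicit and quantitative. Second, your handling of the final optimality step via a countable near-minimizing family $\{\nu_j\}$ of feasible competitors is a genuine improvement in rigor over the paper's closing line ``since $\mu^*$ was arbitrary,'' which glosses over the fact that the full-measure interpolation event depends on the competitor; your fix is the right one and costs nothing.
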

\begin{proof}
    Let $\mu^* \in \KRu(\Theta)$ be such that $y=f_{\mu^*}(x)$  $\rho$-a.e. (which exists because we assumed a well-specified model) but otherwise arbitrary. Comparing the value of the objective function~\eqref{eq:minimisers_N_beta} at $\mu^*$ and the minimizer $\mu_N^\beta$, with probability $1$ over the sequence $\{(x_i,y_i)\}_{i \in \N}$ (in the countably infinite product probability space that makes all of them iid random variables, which is unique, see \cite[Prop.~10.6.1]{Coh13}) we get
    \begin{align*}
        & \frac{1}{N} \sum_{i=1}^N L(y_i, f_{\mu_N^\beta}(x_i)) + \beta \int_\Theta ( 1 + d(\theta,e)^p ) \, \d\abs{\mu_N^\beta} \\
        \leq & \frac{1}{N} \sum_{i=1}^N L(y_i, f_{\mu^*}(x_i)) + \beta\int_\Theta ( 1 + d(\theta,e)^p ) \, \d\abs{\mu^*} 
        =  \beta\int_\Theta ( 1 + d(\theta,e)^p ) \, \d\abs{\mu^*}.
    \end{align*}
    Therefore, $\int_\Theta ( 1 + d(\theta,e)^p ) \, \d\abs{\mu_N^\beta} \leq \int_\Theta ( 1 + d(\theta,e)^p ) \, \d\abs{\mu^*}$  uniformly in $N,\beta$ and, by Theorem~\ref{thm:compact}, we get, with probability $1$, a converging subsequence (which we do not relabel) such that $\norm{\mu_N^\beta - \mu^\dagger}_\KRu \to 0$. 
    In particular, as in \cref{lem:unifconv}, this implies
    \begin{equation}\label{eq:conv-unif}
        \sup_{x \colon \norm{x} \leq 1} \abs{f_{\mu_N^\beta}(x) - f_{\mu^\dagger}(x)} = \sup_{x \colon \norm{x} \leq 1} \abs{\int_\Theta \sigma(\sp{\omega,x}) \d(\mu_N^\beta - \mu^\dagger)} \to 0.
    \end{equation}
    Now we want to show that $y=f_{\mu^\dagger}(x)$  $\rho$-a.e. Using \eqref{eq:triangle} we have that
    \begin{align*}
        \frac{1}{N} \sum_{i=1}^N L(y_i, f_{\mu^\dagger}(x_i)) &\leq \frac{1}{N} \sum_{i=1}^N L(y_i, f_{\mu_N^\beta}(x_i) + f_{\mu^\dagger}(x_i) - f_{\mu_N^\beta}(x_i) ) \\
        &\leq \frac{1}{N} \sum_{i=1}^N C_1\left[ L(y_i, f_{\mu_N^\beta}(x_i)) + L(f_{\mu^\dagger}(x_i), f_{\mu_N^\beta}(x_i) ) \right].
    \end{align*}
    The first summand is upper bounded by $\beta\int_\Theta ( 1 + d(\theta,e)^p ) \, \d\abs{\mu^*}$. For the second one, using \eqref{eq:LrLip}, we obtain
    \begin{align*}
        & \frac{1}{N} \sum_{i=1}^N L(f_{\mu^\dagger}(x_i), f_{\mu_N^\beta}(x_i) ) \leq \frac{1}{N} \sum_{i=1}^N C_2\abs{f_{\mu^\dagger}(x_i) - f_{\mu_N^\beta}(x_i)}^r \\
        =& \frac{1}{N} \sum_{i=1}^N C_2\norm{x_i}^r \abs{f_{\mu^\dagger}\left(\frac{x_i}{\norm{x_i}}\right) - f_{\mu_N^\beta}\left(\frac{x_i}{\norm{x_i}}\right)}^r \\
        \leq& \left(\frac{1}{N} \sum_{i=1}^N C_2\norm{x_i}^r\right) \sup_{j=1,...,N} \abs{f_{\mu^\dagger}\left(\frac{x_j}{\norm{x_j}}\right) - f_{\mu_N^\beta}\left(\frac{x_j}{\norm{x_j}}\right)}^r.
    \end{align*}
    Hence, we get the estimate (dropping both constants $C_1$ and $C_2$ for readability)
    \begin{align}
        \frac{1}{N} \sum_{i=1}^N L(y_i, f_{\mu^\dagger}(x_i)) &\lesssim \beta\int_\Theta ( 1 + d(\theta,e)^p ) \, \d\abs{\mu^*} \nonumber \\ &\qquad+ \left(\frac{1}{N} \sum_{i=1}^N \norm{x_i}^r\right) \sup_{j=1,...,N} \abs{f_{\mu^\dagger}\left(\frac{x_j}{\norm{x_j}}\right) - f_{\mu_N^\beta}\left(\frac{x_j}{\norm{x_j}}\right)}^r. \label{eq:sup}
    \end{align}
    Note that the first term on the right-hand side converges to zero as $\beta \to 0$. Moreover, by the general version for separable metric spaces (\cite{Var58}, see also \cite[Thm.~11.4.1]{Dud02}) of the Glivenko-Cantelli theorem we have, again with probability $1$ over the sequence $\{(x_i,y_i)\}_{i \in \N}$, that the empirical measure $\rho_N := \frac{1}{N} \sum_{i=1}^N \delta_{(x_i,y_i)}$ 
    converges narrowly to the original distribution $\rho$, that is
    \begin{align*}
    \lim_{N \to \infty} \int_{X \times \R} g(x,y) \d\rho_N(x,y) = \int_{X \times \R} g(x,y) \d\rho(x,y) \ \ \text{for all }g \in C_b(X \times \R).
    \end{align*}
    Now, since we have assumed that $\rho$ has $\tilde{r}>1$ finite moments we can use Banach-Alaoglu (upon possibly selecting a further non relabeled subsequence) in the space $\mathcal{M}_{\bar r}(X \times \R)$ defined in \eqref{eq:def_of_Mp} to infer that
    \begin{align}\label{eq:convv}
    \lim_{N \to \infty} \int_{X \times \R} g(x,y) \d\rho_N(x,y) =  \int_{X \times \R} g(x,y) \d\rho(x,y) \ \ \text{for all }g \in C_{w_{\bar r}}(X \times \R)
    \end{align}
    where $w_{\bar r}(x,y) = (1+\|x\|^{\bar r}+|y|^{\bar r})^{-1}$ for $r < \bar r < \tilde r$ (see \eqref{eq:dualnorm} and \eqref{eq:momentsweights}). Hence,  using as test function $g(x,y) = \|x\|^r \in C_{w_{\bar r}}(X \times \R)$, we obtain that the term $\left(\frac{1}{N} \sum_{i=1}^N \norm{x_i}^r\right)$ is bounded uniformly in $N$ because $\rho$ has $r$ finite moments.
    Therefore, since the supremum term in \eqref{eq:sup} converges with probability $1$ to zero due to~\eqref{eq:conv-unif}, we get that $\frac{1}{N} \sum_{i=1}^N L(y_i, f_{\mu^\dagger}(x_i)) \to 0$ with probability $1$ as $N \to \infty$ and $\beta \to 0$.

    Since $f_{\mu^\dagger}$ is positively homogeneous and uniformly continuous, it is also Lipschitz. Therefore, since and $L$ satisfies \eqref{eq:LrLip}, we have that $g(x,y) = L(y, f_{\mu^\dagger}(x)) \in C_{w_{\bar r}}(X \times \R)$. Applying \eqref{eq:convv} with such $g$, we conclude that
    \begin{align*}
        0 = \lim_{N \to \infty} \frac{1}{N} \sum_{i=1}^N L(y_i, f_{\mu^\dagger}(x_i)) = \int_{X \times \R} L(y,f_{\mu^\dagger}(x)) \d\rho(x,y),
    \end{align*}
    so that $y=f_{\mu^\dagger}(x)$ $\rho$-a.e., thanks to \eqref{eq:zero}. By \cref{thm:lsc} we get that
    \begin{align*}
        \int_\Theta ( 1 + d(\theta,e)^p ) \, \d\abs{\mu^\dagger} \leq \int_\Theta ( 1 + d(\theta,e)^p ) \, \d\abs{\mu^*}.
    \end{align*}
    Since $\mu^*$ was arbitrary, this implies~\eqref{eq:mu-dagger-argmin}.
\end{proof}

\begin{example}
    A common example of a loss function satisfying the above conditions is $L(y,y') = \abs{y-y'}^r$ for $r\geq1$. Indeed, since the function $\abs{\cdot}^r$ is convex and absolutely $r$-homogeneous, we have that $\abs{y-y'}^r \leq 2^{r-1} \left( \abs{y}^r + \abs{y'}^r \right)$, and the first inequality follows. The second one is trivial.
\end{example}

\begin{example}
    A similar argument can be applied in the setting of inverse learning theory~\cite{bissantz2004consistency, blanchard2018optimal, bubba2023convex}. Here the goal is to reconstruct an unknown function $f$ from evaluations of \emph{another} function $g$, related to the first one via the action of a forward operator $K$: $g=Kf$. 
    Suppose that $X \subset \R^d$ is compact and $K \colon C(X) \to C(\Omega)$ is a linear bounded operator mapping continuous functions on $X$ to continuous functions on some compact $\Omega \subset \R^s$. The learning problem~\eqref{eq:minimisers_N_beta} becomes     \begin{equation}\label{eq:minimisers_N_beta-inv-prob}
    \mu_N^\beta \in \argmin_{\mu \in \KRu(\Theta)} \frac{1}{N} \sum_{i=1}^N L(g_i, [Kf_\mu](t_i)) + \beta \int_\Theta ( 1 + d(\theta,e)^p ) \, \d\abs{\mu}(\theta),
    \end{equation}
    where $(t_i,g_i) \iid \rho$ are samples from a joint probability measure $\rho \in \calP(\Omega \times \R)$. With similar arguments as in \cref{thm:minimisers-strong-conv-KR}, we get a convergent subsequence such that $\mu_N^\beta \to \mu^\dagger$ (strong convergence in $\KRu$) and, subsequently, $f_{\mu_N^\beta} \to f_{\mu^\dagger}$ strongly in $C(X)$ since $X$ is compact. Using \eqref{eq:LrLip}, we obtain
    \begin{align*}
        & \frac{1}{N} \sum_{i=1}^N L([Kf_{\mu^\dagger}](t_i), [Kf_{\mu_N^\beta}](t_i) ) \leq \frac{1}{N} \sum_{i=1}^N C_2\abs{[Kf_{\mu^\dagger}](t_i) - [Kf_{\mu_N^\beta}](t_i)}^r \\
        \leq & C_2\norm{Kf_{\mu^\dagger} - Kf_{\mu_N^\beta}}_{C(\Omega)}^r 
        \leq C_2\norm{K}^r\norm{f_{\mu^\dagger} - f_{\mu_N^\beta}}_{C(X)}^r \to 0
    \end{align*}
    with probability $1$. Proceeding as in \cref{thm:minimisers-strong-conv-KR}, we conclude that the minimizers converge strongly in $\KRu(\Theta)$ to some 
    \begin{equation*}
        \mu^\dagger \in \argmin_{\mu \in \KRu(\Theta)} \left\{\int_\Theta ( 1 + d(\theta,e)^p ) \, \d\abs{\mu}, \quad \text{ s.t. $g=[Kf_\mu](t)$  $\rho$-a.e.}\right\}
    \end{equation*}
     and the corresponding functions $f_{\mu_N^\beta}$ converge to $f_{\mu^\dagger}$ uniformly on $X$.
\end{example}

\section{Numerical implementation of distillation}\label{sec:experiments}
In this section we implement numerically a simple, low-dimensional example of neural network distillation discussed in Section~\ref{sec:applications}. Let
\begin{align*}
    \mu^* = \sum_{i=1}^{M_{T}} a_i \delta_{(w_i,b_i)}  
\end{align*}
be a given teacher network. Consider the optimization problem \cref{eq:risk_distillation}. By \cref{thm:rep_distillation} we know we can restrict ourselves to linear combinations of extremal points of the $\KR$ norm with a number of dipoles equal to $M\leq N$ (where $N$ is the number of data points), that is 
\begin{align*}
    \mu_{S} = \sum_{i=1}^{M} c_i (\delta_{\theta_i^+} -  \delta_{\theta_i^-}) + \mu_S(\Theta)\delta_e + \mu^*, \quad \text{ for }\theta_i^+,\theta_i^- \in \Theta \text{ and }c_i \in \R.   
\end{align*}
Choosing $\Theta = \R^2$, $e=0$, and an activation function such that $\sigma(0) =0$, we have that the term $\mu_S(\Theta)\delta_e$ does not contribute to $f_{\mu_{S}}$ and, since $|\mu_{S}(\Theta)|$ is penalized in the $\KRu$ term in \cref{eq:risk_distillation}, a minimizer of \cref{eq:risk_distillation} has to satisfy $\mu_{S}(\Theta)=0$. For this reason, with the choice $p=2$ the objective in the optimization problem \cref{eq:risk_distillation} reduces to
\begin{equation}
\begin{aligned}
    & F(c,w^+,b^+,w^-,b^-) = \frac{1}{N}\sum_{j=1}^N \Bigg|\sum_{i=1}^{M} c_i(\sigma(\langle w_i^+,x^j\rangle +  b_i^+) - \sigma(\langle w_i^-,x^j\rangle + b_i^-)) \\ 
    & \qquad +  \sum_{i=1}^{M_T} a_i\sigma(\langle w_i,x^j\rangle + b_i) - y^j\Bigg|^2 
    + \alpha \left\|\sum_{i=1}^M c_i (\delta_{\theta_i^+} -  \delta_{\theta_i^-})\right\|_{\KR} \\
    & \qquad + \beta \sum_{i=1}^M |c_i|(1+ \|(w^-_i, b^-_i)\|^2 + \|(w^+_i, b^+_i)\|^2).
\end{aligned}
\end{equation}

In order to deal with the $\KR$ norm, one could compute an optimal transport problem at each iteration but here we choose to follow a simpler path. By considering the transport plan that couples each $\theta_i^+$ with the corresponding $\theta_i^-$, we have 
\begin{equation}\label{eq:coupled_plan}
    \left\|\sum_{i=1}^M c_i(\delta_{\theta_i^+} - \delta_{\theta_i^-})\right\|_{\KR}\leq \sum_{i=1}^M |c_i| \|(w^+_i, b^+_i) - (w^-_i, b^-_i)\|_2.
\end{equation}
In the experiment, we let $\theta_i^+$ and $\theta_i^-$ start from the same position and, given the fact that the $\KR$ penalization tends to move them closer, we suppose that $\theta_i^+$ would not go too far with respect to $\theta_i^-$, and vice versa, during training. In this setting, the transport plan that couples each $\theta_i^+$ with $\theta_i^-$ is optimal (if $\theta_i^+$ is far from any $\theta_j^+$ and $\theta_j^-$, with $j\neq i$) and the inequality in \cref{eq:coupled_plan} can be regarded as an equality during training. For this reason, in order to give simple and self-contained experiments, we choose to restrict ourselves to the following simpler objective function
\begin{equation}\label{eq:exp_objective}
\begin{aligned}
    & F(c, w^+, b^+, w^-, b^-) = \sum_{j=1}^N \Bigg|\sum_{i=1}^{M} c_i(\sigma(\langle w^+_i,x^j\rangle + b^+_i) - \sigma(\langle w^-_i,x^j\rangle + b^-_i)) \\ 
    & \qquad + \sum_{i=1}^{M_T}  a_i(\sigma(\langle w_i,x^j\rangle + b_i) - y^j\Bigg|^2 + \alpha \sum_{i=1}^{M} |c_i| \|(w^+_i, b^+_i) - (w^-_i, b^-_i)\|_2 \\ 
    & \qquad + \beta \sum_{i=1}^{M} |c_i|(1+ \|(w^-_i, b^-_i)\|^2 + \|(w^+_i, b^+_i)\|^2).
    \end{aligned}
\end{equation}

\paragraph{Experiment design} We consider $\mu^*$ to be a teacher network that was trained to learn a function $\tilde g:\R\to \R$ (actually, we consider $\tilde g = f_{\mu^*}$ for simplicity, hence the teacher learns the function $\tilde g$ exactly). We suppose now that the physics of the problem has changed and that the true function the student network has to reconstruct is given by a shifted version of $\tilde g$, given by $g(x)=\tilde g(x-\delta)$, with $\delta > 0$. The data $(x_j,y_j)_{j=1}^N$ are therefore generated using $g$ and possibly some added noise. With this, our goal becomes clear: the function we want to reconstruct should be similar to the teacher function, but shifted. In particular, we expect to reconstruct a function of the form \[f_{\mu^*}(x-\delta) = \sum_{i=1}^{M_T}a_i\sigma(w_i (x-\delta)+ b_i)=\sum_{i=1}^{M_T}a_i\sigma(w_i x+ b_i - w_i \delta).\] Therefore, we expect to learn a shift in the parameters $b_i$ similar to $w_i \delta$. In particular, we expect the dipoles to position in a way to remove mass from the position $(w_i,b_i)$ and add it to the position $(w_i, b_i')$ with $b_i'=b_i - w_i \delta$. To tackle this problem, we optimize the function in \cref{eq:exp_objective} using Adam \cite{adam}. Furthermore, we compare the resulting student network with another one minimizing
\[\inf_{\mu \in \M(\R^2)} \frac{1}{N} \sum_{i=1}^N L(y_i, f_\mu(x_i)) + \alpha_{\TV}\|\mu - \mu^*\|_{\TV}.\]
Again, because of the representer theorem in \cite{bartolucci2023understanding}, we can write a minimizer of this problem with a combination of $M_{TV}\leq N$ neurons as
\[\mu_{\TV} = \sum_{i=1}^{M_{\TV}} \bar a_i \delta_{(\bar w_i, \bar b_i)} + \mu^*.\]
The optimization problem reduces therefore to minimizing the following function
\[F(\bar a, \bar w, \bar b) = \sum_{j=1}^N \Bigg|\sum_{i=1}^{M_{\TV}} \bar a_i\sigma(\langle \bar w_i,x^j\rangle + \bar b_i) + \sum_{i=1}^{M_T}  a_i(\sigma(\langle w_i,x^j\rangle + b_i) - y^j\Bigg|^2
    +  \alpha_{TV} \sum_{i=1}^{M_{\TV}} |\bar a_i|.\]
We report the results for two different choices of activation function. The results for $\sigma(x) = \sin(x)$ are shown in \cref{figure:1} and for $\sigma(x) = \tanh(x)$  in \cref{figure:2}. We choose $M = M_{\TV}= M_T = 5$, $N=100$ and initialize the weights as $(w^+,b^+)=(w^-,b^-) = (w,b)$ and $(\bar w, \bar b) =0$. We set $\delta = 0.2$, $\alpha = 0.01$, $\beta = 0.00001$ and $\alpha_{\TV}=0.01$.
\begin{figure}
    \centering
    \includegraphics[width=\linewidth]{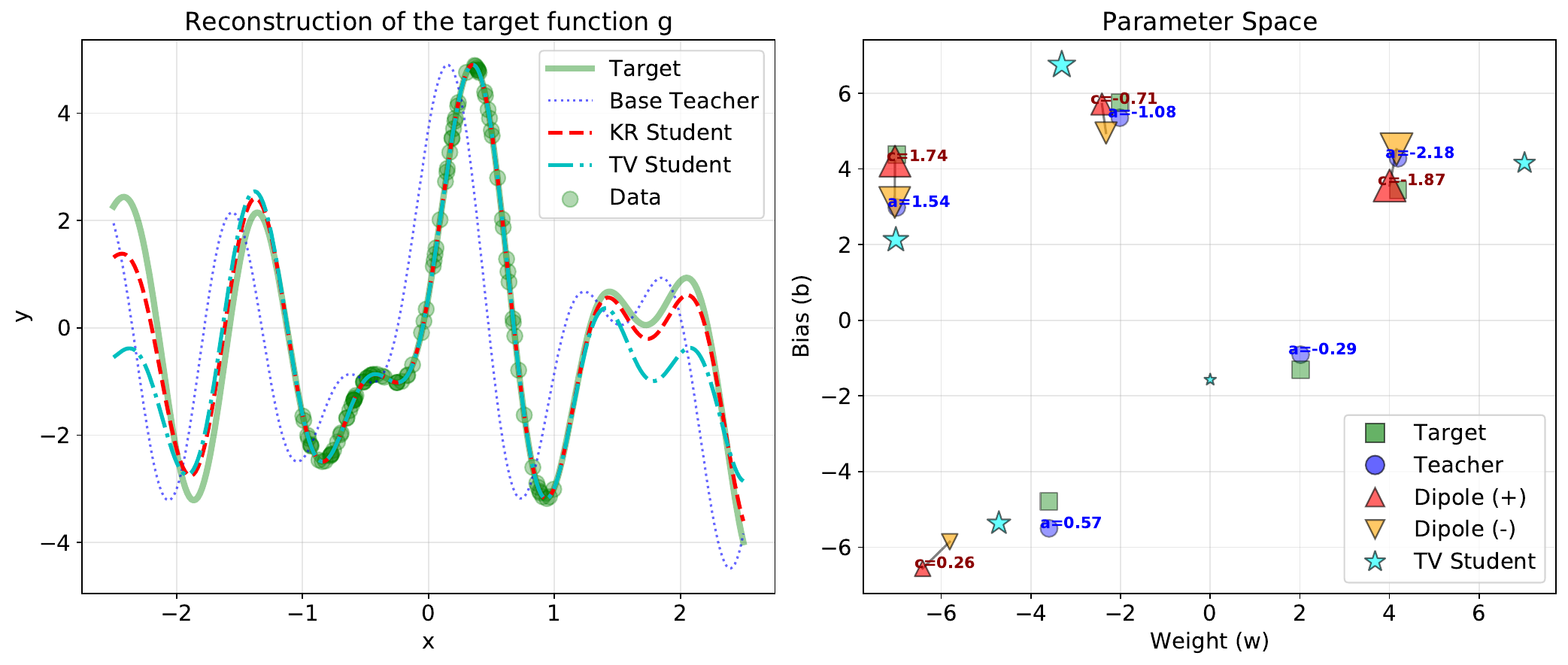}
    \caption{Numerical results with the activation function $\sigma(x) = \sin(x)$. Left: the functions $g$, $f_{\mu^*}$, $f_{\mu_S}$ and $f_{\mu_{\TV}}$. Right: the measures $\mu^*$, the target shifted measure with $b_i'= b_i - w_i \delta$ and the two reconstructions, $\mu_S$ and $\mu_{\TV}$. One can see that the dipoles of the student network that uses the $\KR$ regularization approximately reconstruct the shift, leading to a smaller error on the test set (MSE on the whole interval $[-2.5, 2.5]$). In fact, in this picture we obtain an MSE in the test set of $0.11426929$ for the KR student and of $0.70984238$ for the TV student.}
    \label{figure:1}
\end{figure}

\begin{figure}
    \centering
    \includegraphics[width=\linewidth]{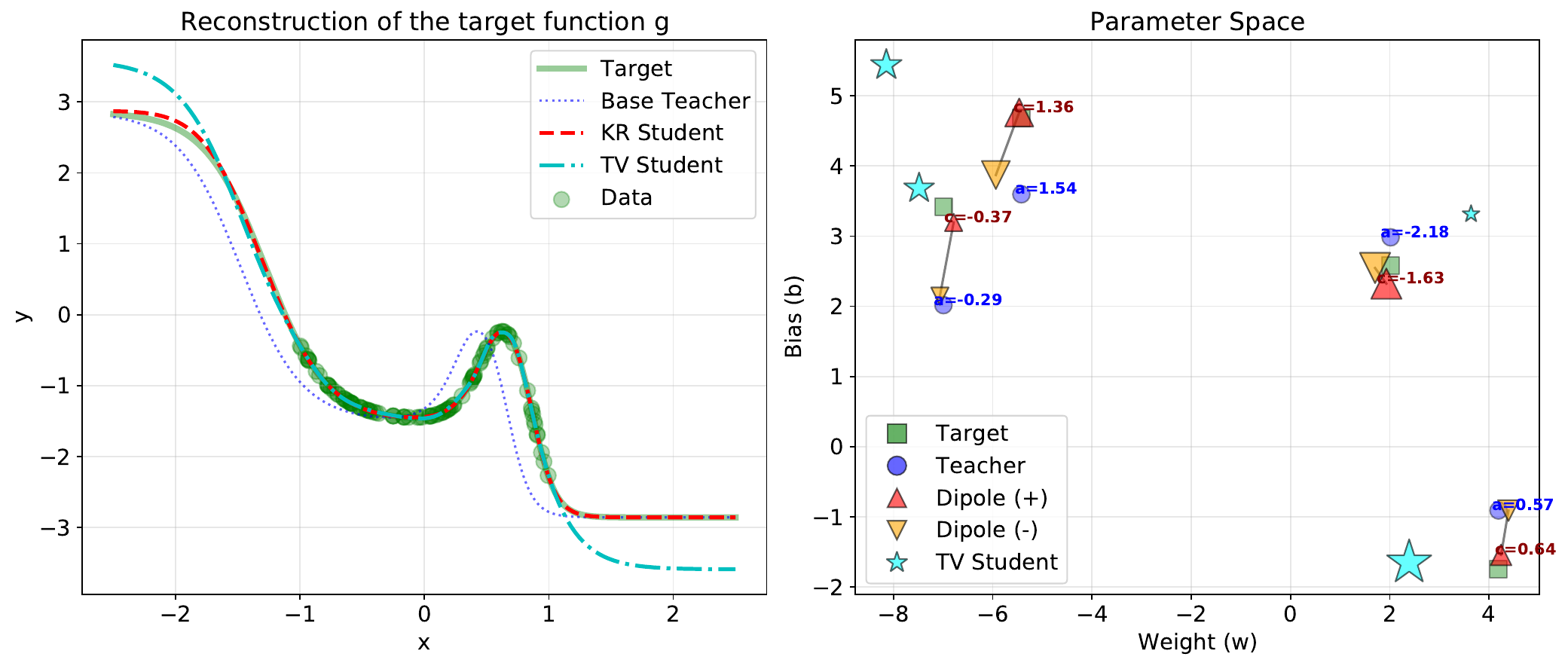}
    \caption{Numerical results with the activation function $\sigma(x) = \tanh(x)$. Left: the functions $g$, $f_{\mu^*}$, $f_{\mu_S}$ and $f_{\mu_{\TV}}$. Right: the measures $\mu^*$, the target shifted measure with $b_i'= b_i - w_i \delta$ and the two reconstructions, $\mu_S$ and $\mu_{\TV}$. One can see that the dipoles of the student network that uses the $\KR$ regularization approximately reconstruct the shift, leading to a significantly smaller error on the test set (MSE on the whole interval $[-2.5, 2.5]$). In fact, in this picture we obtain an MSE in the test set of $0.00119701$ for the KR student and of $0.16099747$ for the TV student.}
    \label{figure:2}
\end{figure}

The code has been implemented in Python on a laptop with Intel Core i7 1165G7 CPU @ 2.80GHz and 8 Gb of RAM. The code is available at \url{https://github.com/EmanueleNaldi/Training-with-KR-norms}.

\section*{Acknowledgements}
FB and YK acknowledge the Royal Society International Exchanges grant IES{\textbackslash}R1{\textbackslash}241209. 
YK also acknowledges the support of the EPSRC (Fellowship EP/V003615/2 and Programme Grant EP/V026259/1) and the National Physical Laboratory. The research by EN has been supported by the MUR Excellence Department Project awarded to the department of mathematics at the University of Genoa, CUP D33C23001110001, and by the US Air Force Office of Scientific Research (FA8655-22-1-7034). FB and EN are members of GNAMPA of the Istituto Nazionale di Alta Matematica (INdAM).
SV was partially supported by the MUR Excellence Department Project MatMod@TOV awarded to the Department of Mathematics, University of Rome Tor Vergata, CUP E83C18000100006.
SV also acknowledges financial support from the MUR 2022 PRIN project GRAFIA, code 202284Z9E4.


\printbibliography

@article{dummer2025vector,
  title={Vector-Valued Reproducing Kernel {B}anach Spaces for Neural Networks and Operators},
  author={Dummer, Sven and Heeringa, Tjeerd Jan and Iglesias, Jos{\'e} A},
  journal={arXiv preprint arXiv:2509.26371},
  year={2025}
}

@article{mukherjee2021end,
  title={End-to-end reconstruction meets data-driven regularization for inverse problems},
  author={Mukherjee, Subhadip and Carioni, Marcello and {\"O}ktem, Ozan and Sch{\"o}nlieb, Carola-Bibiane},
  journal={Advances in Neural Information Processing Systems},
  volume={34},
  pages={21413--21425},
  year={2021}
}

@inproceedings{patrini2020sinkhorn,
  title={Sinkhorn autoencoders},
  author={Patrini, Giorgio and Van den Berg, Rianne and Forre, Patrick and Carioni, Marcello and Bhargav, Samarth and Welling, Max and Genewein, Tim and Nielsen, Frank},
  booktitle={Uncertainty in Artificial Intelligence},
  pages={733--743},
  year={2020},
  organization={PMLR}
}

@article{tolstikhin2017wasserstein,
  title={Wasserstein auto-encoders},
  author={Tolstikhin, Ilya and Bousquet, Olivier and Gelly, Sylvain and Schoelkopf, Bernhard},
  journal={arXiv preprint arXiv:1711.01558},
  year={2017}
}

@inproceedings{arjovsky2017wasserstein,
  title={Wasserstein generative adversarial networks},
  author={Arjovsky, Martin and Chintala, Soumith and Bottou, L{\'e}on},
  booktitle={International conference on machine learning},
  pages={214--223},
  year={2017},
  organization={PMLR}
}

@article{heeringa2025deep,
  title={Deep Networks are Reproducing Kernel Chains},
  author={Heeringa, Tjeerd Jan and Spek, Len and Brune, Christoph},
  journal={arXiv preprint arXiv:2501.03697},
  year={2025}
}

@article{unser2023ridges,
  title={Ridges, neural networks, and the Radon transform},
  author={Unser, Michael},
  fjournal={Journal of Machine Learning Research},
  journal={ J. Mach. Learn. Res.},
  volume={24},
  number={37},
  pages={1--33},
  year={2023}
}

@inproceedings{savarese2019infinite,
  title={How do infinite width bounded norm networks look in function space?},
  author={Savarese, Pedro and Evron, Itay and Soudry, Daniel and Srebro, Nathan},
  booktitle={Conference on Learning Theory},
  pages={2667--2690},
  year={2019},
  organization={PMLR}
}

@article{pinkus1999approximation,
  title={Approximation theory of the MLP model in neural networks},
  author={Pinkus, Allan},
  fjournal={Acta Numerica},
  journal={Acta Numer.},
  volume={8},
  pages={143--195},
  year={1999},
  publisher={Cambridge University Press}
}

@article{hornik1989multilayer,
  title={Multilayer feedforward networks are universal approximators},
  author={Hornik, Kurt and Stinchcombe, Maxwell and White, Halbert},
  fjournal={Neural networks},
  journal = {Neural Netw.},
  volume={2},
  number={5},
  pages={359--366},
  year={1989},
  publisher={Elsevier}
}

@article{bissantz2004consistency,
  title={Consistency and rates of convergence of nonlinear Tikhonov regularization with random noise},
  author={Bissantz, Nicolai and Hohage, Thorsten and Munk, Axel},
  journal={Inverse Problems},
  volume={20},
  number={6},
  pages={1773},
  year={2004},
  publisher={IOP Publishing}
}

@article{bubba2023convex,
  title={Convex regularization in statistical inverse learning problems},
  author={Bubba, Tatiana and Burger, Martin and Helin, Tapio and Ratti, Luca},
  journal={Inverse Probl. Imaging},
  fjournal={Inverse Problems and Imaging},
  volume={17},
  number={6},
  pages={1193--1225},
  year={2023},
}

@article{blanchard2018optimal,
  title={Optimal rates for regularization of statistical inverse learning problems},
  author={Blanchard, Gilles and M{\"u}cke, Nicole},
  journal={Found. Comput. Math.},
  fjournal={Foundations of Computational Mathematics},
  volume={18},
  number={4},
  pages={971--1013},
  year={2018},
  publisher={Springer}
}

@article{barron,
  author={Barron, A.R.},
  journal={IEEE Trans. Inform. Theory},
  fjournal={IEEE Transactions on Information Theory}, 
  title={Universal approximation bounds for superpositions of a sigmoidal function}, 
  year={1993},
  volume={39},
  number={3},
  pages={930-945}
}

@article{parhi:2021,
	author = {Rahul Parhi and Robert D. Nowak},
	  fjournal={Journal of Machine Learning Research},
    journal={J. Mach. Learn. Res.},
	number = {43},
	pages = {1-40},
	title = {Banach Space Representer Theorems for Neural Networks and Ridge Splines},
	volume = {22},
	year = {2021},
}

@article{parhi2022,
author = {Parhi, Rahul and Nowak, Robert D.},
title = {What Kinds of Functions Do Deep Neural Networks Learn? Insights from Variational Spline Theory},
journal = {SIAM J. Math. Data Sci.},
fjournal = {SIAM Journal on Mathematics of Data Science},
volume = {4},
number = {2},
pages = {464-489},
year = {2022}
}

@article{zhang2009reproducing,
  title={Reproducing Kernel {B}anach Spaces for Machine Learning},
  author={Zhang, Haizhang and Xu, Yuesheng and Zhang, Jun},
  fjournal={Journal of Machine Learning Research},
  journal={J. Mach. Learn. Res.},
  volume={10},
  number={12},
  pages={2741–2775},
  year={2009}
}

@article{linrkbs,
  title={On Reproducing Kernel {B}anach Spaces: Generic Definitions and Unified Framework of Constructions},
  author={Lin, R.R. and Zhang, H.Z. and Zhang, J.},
  fjournal={Acta Mathematica Sinica, English Series},
  journal={Acta Math. Sin. (Engl. Ser.)},
  volume={38},
  pages={1459--1483},
  year={2022}
}

@inproceedings{rahimi,
 author = {Rahimi, Ali and Recht, Benjamin},
 booktitle = {Advances in {N}eural {I}nformation {P}rocessing {S}ystems},
 pages = {},
 title = {{Random Features for Large-Scale Kernel Machines}},
 volume = {20},
 year = {2007}
}

@article{chizat2018global,
  title={On the global convergence of gradient descent for over-parameterized models using optimal transport},
  author={Chizat, Lenaic and Bach, Francis},
  journal={Advances in {N}eural {I}nformation {P}rocessing {S}ystems},
  volume={31},
  year={2018}
}

@article{chizat2019lazy,
  title={On lazy training in differentiable programming},
  author={Chizat, Lenaic and Oyallon, Edouard and Bach, Francis},
  journal={Advances in {N}eural {I}nformation {P}rocessing {S}ystems},
  volume={32},
  year={2019}
}

@inproceedings{matthews2018gaussian,
  title={Gaussian Process Behaviour in Wide Deep Neural Networks},
  author={Alexander G. de G. Matthews and Jiri Hron and Mark Rowland and Richard E. Turner and Zoubin Ghahramani},
  booktitle={International Conference on Learning Representations},
year={2018}
}

@inproceedings{
lee2018deep,
author={Jaehoon Lee and Jascha Sohl-dickstein and Jeffrey Pennington and Roman Novak and Sam Schoenholz and Yasaman Bahri},
title={{Deep Neural Networks as Gaussian Processes}},
booktitle={International Conference on Learning Representations},
year={2018}
}

@book{neal2012bayesian,
  title={Bayesian learning for neural networks},
  author={Neal, Radford M},
  volume={118},
  year={2012},
  publisher={Springer Science \& Business Media}
}

@article{jacot2018neural,
  title={Neural tangent kernel: Convergence and generalization in neural networks},
  author={Jacot, Arthur and Gabriel, Franck and Hongler, Cl{\'e}ment},
  journal={Advances in {N}eural {I}nformation {P}rocessing {S}ystems},
  volume={31},
  year={2018}
}

@article{e2019barron,
	author = {E, Weinan and Ma, Chao and Wu, Lei},
	mydoi = {10.1007/s00365-021-09549-y},
	id = {E2022},
	journal = {Constructive Approximation},
	number = {1},
	pages = {369--406},
	title = {The Barron Space and the Flow-Induced Function Spaces for Neural Network Models},
	volume = {55},
	year = {2022},
	bdsk-url-1 = {https://doi.org/10.1007/s00365-021-09549-y}
 }

@article{leshno:1993,
	abstract = {Several researchers characterized the activation function under which multilayer feedforward networks can act as universal approximators. We show that most of all the characterizations that were reported thus far in the literature are special cases of the following general result: A standard multilayer feedforward network with a locally bounded piecewise continuous activation function can approximate any continuous function to any degree of accuracy if and only if the network's activation function is not a polynomial. We also emphasize the important role of the threshold, asserting that without it the last theorem does not hold.},
	author = {Moshe Leshno and Vladimir Ya. Lin and Allan Pinkus and Shimon Schocken},
	mydoi = {https://doi.org/10.1016/S0893-6080(05)80131-5},
	issn = {0893-6080},
	fjournal = {Neural Networks},
    journal = {Neural Netw.},
	keywords = {Multilayer feedforward networks, Activation functions, Role of threshold, Universal approximation capabilities, (μ) approximation},
	number = {6},
	pages = {861-867},
	title = {Multilayer feedforward networks with a nonpolynomial activation function can approximate any function},
	volume = {6},
	year = {1993},
	Bdsk-Url-1 = {https://www.sciencedirect.com/science/article/pii/S0893608005801315},
	Bdsk-Url-2 = {https://doi.org/10.1016/S0893-6080(05)80131-5}}

@article{cybenko:1989,
	abstract = {In this paper we demonstrate that finite linear combinations of compositions of a fixed, univariate function and a set of affine functionals can uniformly approximate any continuous function ofn real variables with support in the unit hypercube; only mild conditions are imposed on the univariate function. Our results settle an open question about representability in the class of single hidden layer neural networks. In particular, we show that arbitrary decision regions can be arbitrarily well approximated by continuous feedforward neural networks with only a single internal, hidden layer and any continuous sigmoidal nonlinearity. The paper discusses approximation properties of other possible types of nonlinearities that might be implemented by artificial neural networks.},
	author = {Cybenko, G. },
	da = {1989/12/01},
	date-added = {2021-04-28 16:42:07 +0100},
	date-modified = {2021-04-28 16:42:07 +0100},
	mydoi = {10.1007/BF02551274},
	id = {Cybenko1989},
    journal = {Math. Control Signals Systems},
	fjournal = {Mathematics of Control, Signals and Systems},
	number = {4},
	pages = {303--314},
	title = {Approximation by superpositions of a sigmoidal function},
	ty = {JOUR},
	volume = {2},
	year = {1989},
	Bdsk-Url-1 = {https://doi.org/10.1007/BF02551274}}

@article{NIPS1996_fb2fcd53,
 author = {Bartlett, Peter},
 journal = {Advances in {N}eural {I}nformation {P}rocessing {S}ystems},
 pages = {},
 title = {{For Valid Generalization the Size of the Weights is More Important than the Size of the Network}},
 volume = {9},
 year = {1996}
}

@article{bartolucci2024neural,
  title={Neural reproducing kernel {B}anach spaces and representer theorems for deep networks},
  author={Bartolucci, Francesca and De Vito, Ernesto and Rosasco, Lorenzo and Vigogna, Stefano},
  journal={arXiv:2403.08750},
  year={2024}
}

@article{lin2022reproducing,
  title={On reproducing kernel {B}anach spaces: Generic definitions and unified framework of constructions},
  author={Lin, Rong Rong and Zhang, Hai Zhang and Zhang, Jun},
  fjournal={Acta Mathematica Sinica, English Series},
  journal={Acta Math. Sin. (Engl. Ser.)},
  volume={38},
  number={8},
  pages={1459--1483},
  year={2022},
  publisher={Springer}
}

@article{hanin1992,
 ISSN = {00029939, 10886826},
 author = {Leonid G. Hanin},
 journal = {Proc. Amer. Math. Soc.},
 fjournal = {Proceedings of the American Mathematical Society},
 number = {2},
 pages = {345--352},
 publisher = {American Mathematical Society},
 title = {Kantorovich-Rubinstein Norm and Its Application in the Theory of {L}ipschitz Spaces},
 volume = {115},
 year = {1992}
}

@article{korolev2022two,
  title={Two-layer neural networks with values in a {B}anach space},
  author={Korolev, Yury},
  journal={SIAM J. Math. Anal.},
  fjournal={SIAM Journal on Mathematical Analysis},
  volume={54},
  number={6},
  pages={6358--6389},
  year={2022},
  publisher={SIAM}
}

@article{summers1970dual,
  title={Dual spaces of weighted spaces},
  author={Summers, W.~H.},
  fjournal={Transactions of the American Mathematical Society},
  journal={Trans. Amer. Math. Soc.},
  volume={151},
  number={1},
  pages={323--333},
  year={1970}
}

@article{bartolucci2023understanding,
  title={Understanding neural networks with reproducing kernel {B}anach spaces},
  author={Bartolucci, Francesca and De Vito, Ernesto and Rosasco, Lorenzo and Vigogna, Stefano},
  journal = {Appl. Comput. Harmon. Anal.},
  fjournal={Applied and Computational Harmonic Analysis},
  volume={62},
  pages={194--236},
  year={2023},
  publisher={Elsevier}
}

@article{akash2022wasserstein,
  title={Wasserstein barycenter-based model fusion and linear mode connectivity of neural networks},
  author={Akash, Aditya Kumar and Li, Sixu and Trillos, Nicol{\'a}s Garc{\'\i}a},
  journal={arXiv:2210.06671},
  year={2022}
}

@article{hinton2015distilling,
  title={Distilling the knowledge in a neural network},
  author={Hinton, Geoffrey and Vinyals, Oriol and Dean, Jeff},
  journal={arXiv:1503.02531},
  year={2015}
}

@inproceedings{imfeld2024transformer,
title={Transformer Fusion with Optimal Transport},
author={Moritz Imfeld and Jacopo Graldi and Marco Giordano and Thomas Hofmann and Sotiris Anagnostidis and Sidak Pal Singh},
booktitle={The 12th International Conference on Learning Representations},
year={2024}
}

@article{singh2020model,
  title={Model fusion via optimal transport},
  author={Singh, Sidak Pal and Jaggi, Martin},
  journal={Advances in {N}eural {I}nformation {P}rocessing {S}ystems},
  volume={33},
  pages={22045--22055},
  year={2020}
}

@article{li2020representation,
  title={Representation transfer by optimal transport},
  author={Li, Xuhong and Grandvalet, Yves and Flamary, R{\'e}mi and Courty, Nicolas and Dou, Dejing},
  journal={arXiv:2007.06737},
  year={2020}
}

@inproceedings{adam,
  author    = {Diederik P. Kingma and Jimmy Ba},
  title     = {Adam: A Method for Stochastic Optimization},
  booktitle = {Proceedings of the 3rd International Conference on Learning Representations (ICLR)},
  year      = {2015},
}

@inproceedings{bucilua2006model,
  title={Model compression},
  author={Bucilu{\v{a}}, Cristian and Caruana, Rich and Niculescu-Mizil, Alexandru},
  booktitle={Proc. 12th ACM SIGKDD conference on knowledge discovery and data mining},
  pages={535--541},
  year={2006}
}

@article{weinan2022representation,
  title={Representation formulas and pointwise properties for Barron functions},
  author={E, Weinan and Wojtowytsch, Stephan},
  journal={Calc. Var. Partial Differential Equations},
  fjournal={Calculus of Variations and Partial Differential Equations},
  volume={61},
  number={2},
  pages={46},
  year={2022},
  publisher={Springer New York}
}

@article{bach2017breaking,
  title={Breaking the curse of dimensionality with convex neural networks},
  author={Bach, Francis},
  fjournal={Journal of Machine Learning Research},
  journal={J. Mach. Learn. Res.},
  volume={18},
  number={19},
  pages={1--53},
  year={2017}
}

@article{boyer2019representer,
  title={On representer theorems and convex regularization},
  author={Boyer, Claire and Chambolle, Antonin and Castro, Yohann De and Duval, Vincent and De Gournay, Fr{\'e}d{\'e}ric and Weiss, Pierre},
  journal={SIAM J. Optim.},
  fjournal={SIAM Journal on Optimization},
  volume={29},
  number={2},
  pages={1260--1281},
  year={2019},
  publisher={SIAM}
}

@book{kantorovich-akilov,
  title={Functional analysis},
  author={Kantorovich, Leonid Vitalʹevich and Akilov, Gleb Pavlovich},
  edition = {2},
  year={1982},
  publisher={Pergamon Press}
}

@book{bogachev-measure-theory-vol2,
author={Vladimir Bogachev},
title={Measure Theory},
volume={2},
year={2007},
publisher={Springer}
}

@book{weaver:2018,
author={Nik Weaver},
title={Lipschitz algebras},
edition={2},
year={2018},
publisher={World Scientific Publishing}
}

@article{godefroy2003lipschitz,
  title={Lipschitz-free {B}anach spaces},
  author={Godefroy, Gilles and Kalton, Nigel J},
  journal={Studia Math.},
  fjournal={Studia Mathematica},
  volume={159},
  pages={121--141},
  year={2003},
  publisher={Instytut Matematyczny Polskiej Akademii Nauk}
}

@article{bredies2020sparsity,
  title={Sparsity of solutions for variational inverse problems with finite-dimensional data},
  author={Bredies, Kristian and Carioni, Marcello},
  journal={Calc. Var. Partial Differential Equations},
  fjournal={Calculus of Variations and Partial Differential Equations},
  volume={59},
  number={1},
  pages={14},
  year={2020},
  publisher={Springer}
}

@article { CarIglWal24p,
    AUTHOR = {Carioni, M. and Iglesias, J.~A. and Walter, D.},
     TITLE = {Extremal points and sparse optimization for generalized {K}antorovich-{R}ubinstein norms},
   JOURNAL = {Found. Comput. Math.},
  FJOURNAL = {Foundations of Computational Mathematics. The Journal of the
              Society for the Foundations of Computational Mathematics},
    VOLUME = {0},
      YEAR = {2024},
    NUMBER = {0},
     PAGES = {1--42},
      ISSN = {1615-3383},
       mydoi = {10.1007/s10208-023-09634-7}
}

@article {aliaga2024convex,
    AUTHOR = {Aliaga, Ram\'{o}n J. and Perneck\'{a}, Eva and Smith, Richard J.},
     TITLE = {Convex integrals of molecules in {L}ipschitz-free spaces},
   JOURNAL = {J. Funct. Anal.},
  FJOURNAL = {Journal of Functional Analysis},
    VOLUME = {287},
      YEAR = {2024},
    NUMBER = {8},
     PAGES = {Paper No. 110560, 43},
      ISSN = {0022-1236},
   MRCLASS = {46B20 (46E15 49Q22)},
  MRNUMBER = {4771935},
       mydoi = {10.1016/j.jfa.2024.110560}
}

@book { burago-burago-ivanov,
    AUTHOR = {Burago, Dmitri and Burago, Yuri and Ivanov, Sergei},
     TITLE = {A course in metric geometry},
    SERIES = {Graduate Studies in Mathematics},
    VOLUME = {33},
 PUBLISHER = {American Mathematical Society, Providence, RI},
      YEAR = {2001},
      ISBN = {0-8218-2129-6},
       mydoi = {10.1090/gsm/033}
}

@book { dugundji,
    AUTHOR = {Dugundji, James},
     TITLE = {Topology},
 PUBLISHER = {Allyn and Bacon Inc., Boston MA},
      YEAR = {1966}
}

@article {NalSav21,
    AUTHOR = {Naldi, E. and Savar\'{e}, G.},
     TITLE = {Weak topology and {O}pial property in {W}asserstein spaces,
              with applications to gradient flows and proximal point
              algorithms of geodesically convex functionals},
   JOURNAL = {Atti Accad. Naz. Lincei Rend. Lincei Mat. Appl.},
  FJOURNAL = {Atti della Accademia Nazionale dei Lincei. Rendiconti Lincei.
              Matematica e Applicazioni},
    VOLUME = {32},
      YEAR = {2021},
    NUMBER = {4},
     PAGES = {725--750},
      ISSN = {1120-6330},
   MRCLASS = {49Q22 (49J45 65K10)},
  MRNUMBER = {4385577},
MRREVIEWER = {Luca Lussardi},
       mydoi = {10.4171/rlm/955}
}

@book { Dud02,
    AUTHOR = {Dudley, R.~M.},
     TITLE = {Real analysis and probability},
    SERIES = {Cambridge Studies in Advanced Mathematics},
    VOLUME = {74},
 PUBLISHER = {Cambridge University Press, Cambridge},
      YEAR = {2002},
     PAGES = {x+555},
      ISBN = {0-521-00754-2},
   MRCLASS = {60-01 (00A05 28-01 46-01 54-01)},
  MRNUMBER = {1932358},
       mydoi = {10.1017/CBO9780511755347}
}

@book { Coh13,
    AUTHOR = {Cohn, D.~L.},
     TITLE = {Measure theory},
    noSERIES = {Birkh\"{a}user Advanced Texts: Basler Lehrb\"{u}cher},
   EDITION = {2},
 PUBLISHER = {Birkh\"{a}user / Springer, New York},
      YEAR = {2013},
     PAGES = {xxi+457},
      ISBN = {978-1-4614-6955-1},
   MRCLASS = {28-01},
  MRNUMBER = {3098996},
MRREVIEWER = {Ville Suomala},
       mydoi = {10.1007/978-1-4614-6956-8}
}

@article { Var58,
    AUTHOR = {Varadarajan, V. S.},
     TITLE = {On the convergence of sample probability distributions},
   JOURNAL = {Sankhy\={a}},
  FJOURNAL = {Sankhy\={a}. The Indian Journal of Statistics.},
    VOLUME = {19},
      YEAR = {1958},
     PAGES = {23--26},
      ISSN = {0036-4452},
   MRCLASS = {60.00},
  MRNUMBER = {94839},
MRREVIEWER = {M. Lo\`eve},
}

@book { Spi79,
    AUTHOR = {Spivak, M.},
     TITLE = {A comprehensive introduction to differential geometry. {V}ol. {I}},
   EDITION = {2},
 PUBLISHER = {Publish or Perish, Inc., Wilmington, DE},
      YEAR = {1979},
     PAGES = {xiv+668},
      ISBN = {0-914098-83-7},
   MRCLASS = {53-01},
  MRNUMBER = {532830},
MRREVIEWER = {M. A. Akivis},
}

@article { AreEel56,
    AUTHOR = {Arens, Richard F. and Eells, Jr., James},
     TITLE = {On embedding uniform and topological spaces},
   JOURNAL = {Pacific J. Math.},
  FJOURNAL = {Pacific Journal of Mathematics},
    VOLUME = {6},
      YEAR = {1956},
     PAGES = {397--403},
      ISSN = {0030-8730},
   MRCLASS = {54.0X},
  MRNUMBER = {81458},
MRREVIEWER = {E. Michael}
}

@book { Wen05,
    AUTHOR = {Wendland, Holger},
     TITLE = {Scattered data approximation},
    SERIES = {Cambridge Monographs on Applied and Computational Mathematics},
    VOLUME = {17},
 PUBLISHER = {Cambridge University Press, Cambridge},
      YEAR = {2005},
     PAGES = {x+336},
   MRCLASS = {41-02 (41A10 41A63 65D10)},
  MRNUMBER = {2131724},
}

@book { PauRag16,
    AUTHOR = {Paulsen, Vern I. and Raghupathi, Mrinal},
     TITLE = {An introduction to the theory of reproducing kernel {H}ilbert
              spaces},
    noSERIES = {Cambridge Studies in Advanced Mathematics},
    noVOLUME = {152},
 PUBLISHER = {Cambridge University Press, Cambridge},
      YEAR = {2016},
     PAGES = {x+182},
      ISBN = {978-1-107-10409-9},
   MRCLASS = {46-02 (46C05)},
  MRNUMBER = {3526117},
       mydoi = {10.1017/CBO9781316219232}
}

@article{ BelEtAl19,
author = {Mikhail Belkin  and Daniel Hsu  and Siyuan Ma  and Soumik Mandal },
title = {Reconciling modern machine-learning practice and the classical bias–variance trade-off},
journal= {Proc. Natl. Acad. Sci. USA},
fjournal = {Proceedings of the National Academy of Sciences},
volume = {116},
number = {32},
pages = {15849-15854},
year = {2019},
mydoi = {10.1073/pnas.1903070116}
}

@inproceedings{ bietti2021deep,
title={Deep Equals Shallow for {R}e{LU} Networks in Kernel Regimes},
author={Alberto Bietti and Francis Bach},
booktitle={International Conference on Learning Representations},
year={2021},
}

@article { GhoEtAl21,
    AUTHOR = {Ghorbani, Behrooz and Mei, Song and Misiakiewicz, Theodor and
              Montanari, Andrea},
     TITLE = {When do neural networks outperform kernel methods?},
   JOURNAL = {J. Stat. Mech. Theory Exp.},
  FJOURNAL = {Journal of Statistical Mechanics: Theory and Experiment},
      YEAR = {2021},
    NUMBER = {12},
     PAGES = {Paper No. 124009, 110},
   MRCLASS = {68T07 (62M45)},
  MRNUMBER = {4412837},
MRREVIEWER = {Haolin Chen},
       mydoi = {10.1088/1742-5468/ac3a81},
}

@inproceedings{jiang2020fantastic,
title={Fantastic Generalization Measures and Where to Find Them},
author={Yiding Jiang and Behnam Neyshabur and Hossein Mobahi and Dilip Krishnan and Samy Bengio},
booktitle={International Conference on Learning Representations},
year={2020},
}

@article{zhang,
author = {Zhang, Chiyuan and Bengio, Samy and Hardt, Moritz and Recht, Benjamin and Vinyals, Oriol},
title = {Understanding deep learning (still) requires rethinking generalization},
journal = {Commun. ACM},
fjournal = {Communications of the ACM},
year = {2021},
volume = {64},
number = {3}
}

@article{mei2018,
author = {Song Mei  and Andrea Montanari  and Phan-Minh Nguyen },
title = {A mean field view of the landscape of two-layer neural networks},
journal= {Proc. Natl. Acad. Sci. USA},
fjournal = {Proceedings of the National Academy of Sciences},
volume = {115},
number = {33},
pages = {E7665-E7671},
year = {2018}
}

@InProceedings{pmlr-v238-ferbach24a,
  title = 	 {Proving Linear Mode Connectivity of Neural Networks via Optimal Transport},
  author =       {Ferbach, Damien and Goujaud, Baptiste and Gidel, Gauthier and Dieuleveut, Aymeric},
  booktitle = 	 {Proc. 27th International Conference on Artificial Intelligence and Statistics},
  pages = 	 {3853--3861},
  year = 	 {2024},
  publisher =    {PMLR},
  novolume = 	 {238},
  noseries = 	 {Proceedings of Machine Learning Research},
}

@inproceedings{Ongie2020A,
title={A Function Space View of Bounded Norm Infinite Width {ReLU} Nets: The Multivariate Case},
author={Greg Ongie and Rebecca Willett and Daniel Soudry and Nathan Srebro},
booktitle={International Conference on Learning Representations},
year={2020}
}

@ARTICLE{Unser2017769,
	author = {Unser, Michael and Fageot, Julien and Ward, John Paul},
	title = {Splines are universal solutions of linear inverse problems with generalized TV regularization},
	year = {2017},
	journal = {SIAM Review},
	volume = {59},
	number = {4},
	pages = {769--793}
}

@article{ SpeHeeSchBru22,
  title={Duality for neural networks through reproducing kernel {B}anach spaces},
  author={Len Spek and Tjeerd Jan Heeringa and Felix Schwenninger and Christoph Brune},
  journal={arXiv:2211.05020},
  year={2022}
}

@article { HerSpeSchBru24,
    AUTHOR = {Heeringa, Tjeerd Jan and Spek, Len and Schwenninger, Felix L.
              and Brune, Christoph},
     TITLE = {Embeddings between {B}arron spaces with higher-order
              activation functions},
   JOURNAL = {Appl. Comput. Harmon. Anal.},
  FJOURNAL = {Applied and Computational Harmonic Analysis. Time-Frequency
              and Time-Scale Analysis, Wavelets, Numerical Algorithms, and
              Applications},
    VOLUME = {73},
      YEAR = {2024},
     PAGES = {Paper No. 101691, 21},
      ISSN = {1063-5203},
   MRCLASS = {41A30 (46E35 68T07)},
  MRNUMBER = {4787277},
       mydoi = {10.1016/j.acha.2024.101691},
}

@article { DevNowParSie25,
    AUTHOR = {Ronald DeVore and Robert D. Nowak and Rahul Parhi and Jonathan W. Siegel},
     TITLE = {Weighted variation spaces and approximation by shallow {ReLU} networks},
   JOURNAL = {Appl. Comput. Harmon. Anal.},
  FJOURNAL = {Applied and Computational Harmonic Analysis. Time-Frequency
              and Time-Scale Analysis, Wavelets, Numerical Algorithms, and
              Applications},
    VOLUME = {74},
      YEAR = {2025},
     PAGES = {Paper No. 101713, 22},
      ISSN = {1063-5203},
       mydoi = {10.1016/j.acha.2024.101713},
}

\end{document}